  \def\textbf{} 
  \def\textit{} 
\theoremstyle{plain}
\newtheorem{theorem}{Theorem}[section]
\newtheorem{proposition}[theorem]{Proposition}
\newtheorem{remark}[theorem]{Remark}
\theoremstyle{definition}
\newtheorem{definition}[theorem]{Definition}
\numberwithin{theorem}{section}
\numberwithin{equation}{section}
\def\R{\mathbb{R}}
\def\N{\mathbb{N}}
\def\Z{\mathbb{Z}}
\title{Controllability of a Fluid-Structure Interaction System Governed by the Heat and Damped Beam Equations }
\author{Mehdi Badra\footnote{Email: \href{mailto:le mail de X}{mehdi.badra@math.univ-toulouse.fr}}~~, J\'er\'emi Dard\'e\footnote{Email: \href{mailto:le mail de Y}{jeremi.darde@math.univ-toulouse.fr}}~~and~~Emmanuel Zongo\footnote{Email: \href{mailto:}{emmanuel.zongo@math.univ-toulouse.fr}}}
\date{
	\footnotesize{Institut de Math\'ematiques de Toulouse, UMR5219,\\ Universit\'e de Toulouse, CNRS, \\ UPS, F-31062 Toulouse Cedex 9, France.}
}
\begin{document}
\pdfrender{StrokeColor=black,TextRenderingMode=2,LineWidth=0.2pt}
\maketitle
\begin{abstract}
In this article, we study the null-controllability and observability properties of a bi-dimensio\-nal fluid-structure interaction system, governed by the heat equation coupled with the damped beam equation.
To do so, we prove a global Carleman estimate for the coupled system, making explicit the dependence on the damping parameter. 
In the course of the study, we demonstrate several inequalities for the beam equation alone, valid for different damping parameter regimes.
\\
\\
\textbf{Keywords:} Carleman estimates, Observability inequality, Null-controllability, Fluid-structure interaction system.\\
\\
\textbf{ 2020 Mathematics Subject Classification:} 76D05, 35Q30, 74F10, 76D55, 76D27, 93B05, 93B07, 93C10.\\
\\
\textbf{ Funding:} The second author was supported by the ANR LabEx CIMI (under grant ANR-11-LABX-0040) within the French State Programme “Investissements d’Avenir”

The first and third authors was supported by the Agence Nationale de la Recherche, Project
 TRECOS, ANR-20-CE40-0009.
\end{abstract}
\tableofcontents
\section{Introduction and main results}

Fluid-Structure interaction (FSI) models are central to understanding the interplay between fluid dynamics and deformable boundaries in physical, biological, engineering systems, etc. In particular, the cardiovascular system illustrates such interactions, where pulsatile blood flow interacts with the elastic properties of large vessels walls. The comprehensive modeling of these phenomena has been the focus of foundational works detailing the complex coupling between fluid dynamics and vascular wall mechanics in precise geometries. The model consists in a coupling between the 2d-Navier-Stokes equations in a channel and a damped beam-equation that describes the deformation of the boundary of the domain. We refer to \cite{quarteroni} and \cite{GrandmontMaday2003} for the model derivation and we refer to \cite{Grandmont2019} (and references therein) for the mathematical well posedness of the model. Controllability issues have been tackled very recently in \cite{Buffe, Buffe2025} by coupling microlocal analysis and global Carleman inequalities.

In contrast to these realistic models, the present work deals with the null-controllability of a simplified fluid-structure interaction system where the Navier-Stokes equations are replaced by the heat equation. The proposed model retains the core framework of a fluid equation coupled at the boundary with a structural equation such as the damped beam equation, thereby capturing the basic dynamic interaction between a diffusive field and a deformable interface. The aim is not to fully capture the biomechanical behavior of blood vessels, but rather to explore the essential mathematical features of a coupled system within a tractable setting. Specifically, our approach relies entirely on global Carleman inequalities, and allows us to consider arbitrarily large structural damping for the beam -- features absent from the previously cited works  (see Section \ref{Sect1.4}).

In the present study, we focus on a bi-dimensional interaction system set in a rectangular domain, with periodic boundary  conditions on the lateral boundary of the rectangle. Mathematically, we set $\mathcal I = \R / 2\pi \Z$ the one-dimensional torus, and, for some $T>0$, define 
$$\Omega:=\mathcal{I}\times (0,1),~~~\Gamma_1=\mathcal{I}\times\{1\},~~\Gamma_0=\mathcal{I}\times\{0\},~~ Q=(0,T)\times\Omega.$$
In the following, we will also identify $\mathcal I$ and $\Gamma_1$ when it does not lead to confusion. The precise meaning of this identification is given in Section \ref{section_functional_setting}.

\begin{figure}[h!]
\centering
\begin{tikzpicture}

\draw [thick] (0,0) rectangle (6,3);

\draw[thick, red] (0,3) -- (6,3);
\node at (3,3.4) { $\Gamma_1$ (Beam)};

\node at (3,-0.4) { $\Gamma_0$};

\node at (-0.01,1.5) { $=$}; 
\node at (6.,1.5) { $=$};  

\node at (-0.2,3) { $1$};
\node at (-0.3,-0.3) { $0$};

\node at (6,-0.3) { $2\pi$};

\node at (3,1.5) { $\Omega$};

\end{tikzpicture},
\caption{Geometrical setting of our study}
\label{fig}
\end{figure}
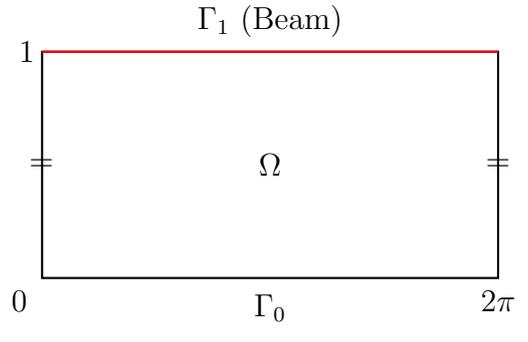

In this geometrical setting, we consider, for some initial conditions $(w_0,\zeta_0,\zeta_1) \in L^2(\Omega) \times H^2(\mathcal I) \times L^2(\mathcal I)$ and some source terms $G \in L^2(Q)$ and $H \in L^2((0,T) \times \mathcal I)$, the forward system of equations
\begin{equation} \label{eq: forward system}
 \begin{cases}
\partial_t w -\Delta w = G& \text{in}~Q,\\
w=0 &\text{on}~(0,T) \times\Gamma_0,\\
w=\partial_t\zeta  &\text{on}~(0,T) \times\Gamma_1,\\
\partial^2_{t}\zeta-\alpha\partial^2_{x_1}\partial_t\zeta+\alpha\partial_t\zeta+\partial^4_{x_1}\zeta+\zeta=-\partial_nw+H  & \text{in}~(0,T) \times\mathcal{I},\\
w(0,\cdot)=w_0,~\text{in}~~\Omega,~~ \zeta(0,\cdot)=\zeta_0,~ \partial_t\zeta(0,\cdot)=\zeta_1 &\text{in}~\mathcal{I}.
\end{cases}  
\end{equation}
In \eqref{eq: forward system}, the first heat equation in the domain $\Omega$ plays the role of a fluid equation submitted to some external forces $G$. The condition $w=\partial_t\zeta$ on the boundary indicates that the velocity of the fluid coincides with the velocity of the beam's displacement. The fourth line in \eqref{eq: forward system} represents the structural response of the beam under the external force $H$ and under the force exerted by the fluid that is represented by $-\partial_n w$. The coefficient $\alpha>0$ is a damping parameter.

 We emphasize that the terms $\zeta$ and $\alpha \partial_t \zeta$ in the fourth equation of system \eqref{eq: forward system} are included for technical reasons only. This simplifies the presentation of the  paper  by straightforwardly ensuring the positivity of the underlying elliptic operators (see \eqref{DefA1} and \eqref{DefA2} below). Note that in the original fluid-structure system, the incompressibility condition ensure that the beam deformation $\zeta$ has a zero mean value (see the introduction of \cite{BeiroVeiga2004}), and the elliptic operators are inherently positive without the additional terms.

System \eqref{eq: forward system} is well posed (see Section \ref{section_functional_setting}): it admits a unique solution $(w,\zeta)$ satisfying 
\begin{equation}\label{RegClassic1}
    (w,\zeta,\partial_t\zeta)\in C([0,T];L^2(\Omega) \times H^2(\mathcal I)\times L^2(\mathcal I))
\end{equation}
and the following regularity for all $ \varepsilon>0$:
\begin{equation}\label{RegClassic2}
\begin{array}{c}
\displaystyle w \in L^2(\varepsilon,T; H^2(\Omega)) \cap H^1(\varepsilon,T;L^{2}(\Omega)),\medskip \\
\displaystyle \zeta \in L^2(\varepsilon,T;H^4(\mathcal I)) \cap
H^1(\varepsilon,T;H^2(\mathcal I))\cap H^2(0,T;L^2(\mathcal I)).
\end{array}
\end{equation}

\subsection{Control problem}

We now focus on our main problem of interest, the null-controllability of the interaction system. More precisely, we consider $\omega$, a non-empty open subset of $\Omega$, and $J$, a non-empty open subset of $\mathcal I$. In the context of controllability, the version of system \eqref{eq: forward system}
we are interested in reads 
\begin{align}\label{eq: parabolic}
\begin{cases}
\partial_t w -\Delta w =\mathbbm{1}_{\omega} g&\text{in}~Q,\\
w=0 &\text{on}~(0,T) \times\Gamma_0,\\
w=\partial_t\zeta  &\text{on}~(0,T) \times\Gamma_1,\\
\partial^2_{t}\zeta-\alpha\partial^2_{x_1}\partial_t\zeta+\alpha\partial_t\zeta+\partial^4_{x_1}\zeta+\zeta=-\partial_nw+\mathbbm{1}_{J} h & \text{in}~(0,T) \times\mathcal{I},\\
w(0,\cdot)=w_0,~\text{in}~~\Omega,~~ \zeta(0,\cdot)=\zeta_0,~ \partial_t\zeta(0,\cdot)=\zeta_1 &\text{in}~\mathcal{I},
\end{cases}
\end{align}
where $g \in L^2((0,T)\times \omega)$ and $h \in L^2(0,T)\times J)$ are the distributed controls acting respectively on the heat equation and on the damped beam equation. The functions $\mathbbm{1}_{\omega}$ and $\mathbbm{1}_{J}$ denote the characteristic functions on $\omega$ and $J$ respectively.

\begin{definition}
We say that system \eqref{eq: parabolic} is null-controllable at time $T>0$ if there exists a constant $C_T>0$ such that, for all initial conditions $w_0\in L^2(\Omega)$, $\zeta_0\in H^2(\mathcal I)$ and $\zeta_1 \in L^2(\mathcal I)$, there exist  $g \in L^2((0,T)\times \omega)$ and $h \in L^2((0,T) \times J)$ such that $(w,\zeta)$ solution of \eqref{eq: parabolic} satisfies $(w(T),\zeta(T),\partial_t\zeta(T)) = (0,0,0)$, 
and 
$$
\Vert g\Vert_{L^2((0,T)\times \omega)} + \Vert h \Vert_{L^2((0,T)\times J)} \leq C_T
\left( \Vert w_0\Vert_{L^2(\Omega)} + \Vert \zeta_0 \Vert_{H^2(\mathcal I)} + \Vert \zeta_1 \Vert_{L^2(\mathcal I)} \right).
$$
\end{definition}

In this study, we prove that system \eqref{eq: parabolic} is null-controllable in any arbitrary small time $T>0$, for any choice of the damping parameter $\alpha>0$.

\begin{theorem} \label{thm_nullcontrollability_intro}
Let $T>0$, and $\alpha>0$. System \eqref{eq: parabolic} is null-controllable at time $T$, and there exists $C>0$, independent of $T$, such that we can choose $C_T = C e^{\frac{C}{T}}$.
\end{theorem}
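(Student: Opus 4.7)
The plan is to proceed by the Hilbert Uniqueness Method (HUM), reducing the null-controllability of \eqref{eq: parabolic} to an observability inequality for the corresponding adjoint system. Let $(\varphi,\eta)$ denote the adjoint variables. Multiplying \eqref{eq: parabolic} by $(\varphi,\eta)$ and integrating by parts, taking into account the boundary coupling $w=\partial_t\zeta$ on $\Gamma_1$ and the trace $-\partial_n w$ entering the beam equation, one obtains a backward-in-time adjoint system of the form
\begin{equation*}
\begin{cases}
-\partial_t \varphi - \Delta \varphi = 0 & \text{in } Q,\\
\varphi = 0 & \text{on } (0,T)\times \Gamma_0,\\
\varphi = \eta & \text{on } (0,T)\times \Gamma_1,\\
\partial_t^2 \eta + \alpha \partial_{x_1}^2 \partial_t \eta - \alpha \partial_t \eta + \partial_{x_1}^4 \eta + \eta = -\partial_n \varphi & \text{in } (0,T)\times\mathcal I,\\
\varphi(T,\cdot)=\varphi_T,\ \eta(T,\cdot)=\eta_T,\ \partial_t\eta(T,\cdot)=\eta_T' &
\end{cases}
\end{equation*}
(the precise sign conventions being dictated by the weak formulation of \eqref{eq: parabolic} and the regularity announced in \eqref{RegClassic1}--\eqref{RegClassic2}). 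Null-controllability of \eqref{eq: parabolic} at time $T$ with cost $C_T$ is then equivalent, by a standard duality argument, to the observability estimate
\begin{equation*}
\Vert \varphi(0)\Vert_{L^2(\Omega)}^2 + \Vert \eta(0)\Vert_{H^2(\mathcal I)}^2 + \Vert \partial_t\eta(0)\Vert_{L^2(\mathcal I)}^2 \leq C_T^2\left( \Vert \varphi\Vert_{L^2((0,T)\times\omega)}^2 + \Vert \eta\Vert_{L^2((0,T)\times J)}^2\right)
\end{equation*}
for every finite-energy solution of the adjoint system, and the usual Fenchel--Rockafellar minimisation of a quadratic functional then produces controls satisfying the claimed bound.

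The second step is to derive this observability inequality from the global Carleman estimate for the coupled heat--beam system announced in the abstract. I would work with a Carleman weight of the classical form $e^{s\theta(t)\psi(x)}$, where $\theta(t)=1/(t(T-t))$ blows up at $t=0$ and $t=T$ and $\psi$ is a smooth space weight having no critical points outside $\omega$, complemented by an analogous one-dimensional weight on $\mathcal I$ adapted to $J$ for the beam. The heat-equation Carleman estimate for $\varphi$ will control a weighted global norm by the local term on $\omega$ plus trace contributions on $\Gamma_1$; the beam Carleman estimate for $\eta$ will control the beam energy by the local term on $J$ plus contributions from $\partial_n \varphi$. These boundary contributions must be absorbed through the coupling $\varphi=\eta$ on $\Gamma_1$ and the balance between $\partial_n\varphi$ and the beam trace, which is where the explicit dependence on $\alpha$ is crucial: one has to choose the large Carleman parameters $s,\lambda$ large enough, in a way uniform for $\alpha$ in arbitrary bounded or unbounded ranges, so that the trace cross-terms can be reabsorbed.

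Once the weighted Carleman estimate is available for the coupled pair $(\varphi,\eta)$, the usual procedure yields the observability inequality: one restricts the time integration to the subinterval $(T/4,3T/4)$ where $\theta$ is bounded above, uses backward-in-time dissipation/energy estimates for the adjoint system to bound the energy at $t=0$ by a suitable norm of the solution at $t=T/2$, and tracks the dependence of all constants on $T$. The factor $\theta(T/2)=4/T^2$ appearing in the Carleman exponential, combined with the $s$-dependence arising when dominating lower-order terms, is precisely what produces the claimed cost $C_T = C e^{C/T}$ with $C$ independent of $T$.

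The main obstacle in this plan is the Carleman estimate itself for the coupled system, rather than any of the remaining steps. Standard parabolic Carleman estimates for $\varphi$ produce boundary terms on $\Gamma_1$ that cannot be dropped, while the beam equation lives on $\Gamma_1$ and interacts with $\varphi$ through both the trace $\varphi=\eta$ and the normal derivative $\partial_n\varphi$; matching the two sets of Carleman weights across the coupling while retaining positivity of the quadratic form, and doing so in a way that remains uniform in $\alpha$ over both the small-damping and large-damping regimes, is the real technical difficulty and explains why the paper announces several auxiliary Carleman inequalities for the beam alone in different damping regimes.
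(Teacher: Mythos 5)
Your overall strategy (duality, a global Carleman estimate for the coupled system, then a dissipation argument in time) is the same as the paper's, but there is a genuine gap at the duality step and in what you infer from it. The control $h$ acts in the beam equation, so the dual observation on $J$ is of the adjoint component dual to that equation, namely the velocity: in the paper's notation $B^*[u,\eta_1,\eta_2]=[u|_\omega,\eta_2|_J]$ with $\eta_2=\partial_t\eta$, so the correct observability inequality is \eqref{eq: observability inequality introduction}, with $\Vert \partial_t\eta\Vert_{L^2((0,T)\times J)}$ on the right-hand side, not $\Vert \eta\Vert_{L^2((0,T)\times J)}$ as you wrote. Relatedly, your adjoint system is internally inconsistent: if the adjoint beam variable is the multiplier of the beam equation (so that the trace coupling is $\varphi=\eta$ on $\Gamma_1$, as you state), then the source in the adjoint beam equation is $-\partial_t\partial_n\varphi$, not $-\partial_n\varphi$; with the paper's choice of variables the adjoint takes exactly the same form as the forward system, with coupling $u=\partial_t\eta$ on $\Gamma_1$, beam source $-\partial_n u$, and observation of $\partial_t\eta$ on $J$ (see \eqref{adjoint-final}--\eqref{obs-form-simple}).

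This mis-statement hides the main remaining difficulty, which your plan skips: the global Carleman estimate for the coupled system (Theorem \ref{Carleman-couplé}) naturally yields an observation of $\eta$ on $J$, not of $\partial_t\eta$, so ``the usual procedure'' does not directly produce the needed observability inequality. The paper devotes Proposition \ref{obs-dteta} and Section \ref{section_observability} to this conversion: one applies the Carleman estimate to the time-differentiated solution $(\partial_t u,\partial_t\eta,\partial_t^2\eta)$, integrates by parts in time in the local term on $\omega$, and absorbs the resulting $\partial_t^2u$ term using parabolic maximal regularity for the semigroup (with the $\alpha$-explicit constant $M_\alpha$ of \eqref{estimate-semig}) together with the coercivity estimate \eqref{estimate-MT}, $\Vert\mathcal A Y\Vert_{\mathscr H}\gtrsim (1+\alpha)^{-1}\Vert Y\Vert_{\mathscr H}$; none of this appears in your proposal, and without it the HUM controls you construct do not solve the original control problem. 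Finally, the cost $Ce^{C/T}$ is not automatic from ``$\theta(T/2)=4/T^2$'': it requires that the Carleman parameter be admissible already for $s\gtrsim(1+\alpha)(T^k+T^{k-1})$ (the paper's improvement over the $T^4+T^2$ scaling of earlier work, cf. Remark \ref{Rkcout2}), a point your sketch asserts but does not justify.
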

\begin{remark}
In Theorem \ref{thm_nullcontrollability_intro}, the dependence of the constant $C_T$ on the time horizon $T$ is improved compared to \cite{Buffe}, where the constant takes the form $C_T = e^{\frac{C}{T^2}}$. 
This is because \cite{Buffe} utilizes the Carleman inequality for the damped beam equation presented in \cite{Sourav}, where the dependence of the Carleman parameter $s$ on $T$ is suboptimal. We improve this dependence in Theorem \ref{poutre3} and in Theorem \ref{poutre-theorem}, see Remark \ref{Rkcout2}.

Moreover, the dependence of the constant $C_T$ on the damping parameter $\alpha$ can be quantified precisely. According to Theorem \ref{thm_observability_intro} and Theorem \ref{observability} below, we have $C_T=Ce^{e^{C\lambda_\alpha}\left(1+\frac{1}{T}\right)}$ where $\lambda_\alpha=\tau\alpha^{-2}$ if $\alpha<1$ and $\lambda_\alpha=\tau\alpha^{2/3}$ if $\alpha\geq 1$, for some $C>0$ and $\tau>0$ that are independent on $\alpha>0$ .
\end{remark}

To prove this theorem, we use the classical duality strategy, and prove an observabilty inequality for the adjoint system, which in our context reads (see Section \ref{section_functional_setting}):

\begin{equation} \label{eq: parabolic adjoint}
    \begin{cases}
    \partial_t u - \Delta u = 0, & \text{ in } Q, \\
    u = 0 , & \text{ on } (0,T) \times \Gamma_0, \\
    u = \partial_t \eta,& \text{ on } (0,T) \times \Gamma_1, \\
    \partial^2_{t}\eta-\alpha\partial^2_{x_1}\partial_t\eta+\alpha\partial_t\eta+\partial^4_{x_1}\eta+\eta=-\partial_n u& \text{in}~(0,T) \times\mathcal{I},\\
    u(0,\cdot)=u_0,~\text{in}~~\Omega,~~ \eta(0,\cdot)=\eta_0,~ \partial_t\eta(0,\cdot)=\eta_1 &\text{in}~\mathcal{I}.
    \end{cases}
\end{equation}

Here, the initial conditions satisfy $u_0 \in L^2(\Omega)$, $\eta_0 \in H^2(\mathcal I)$, $\eta_1 \in L^2(\mathcal I)$, and as a consequence system \eqref{eq: parabolic adjoint} is well posed.

\begin{definition}
Let $T>0$. We say that system \eqref{eq: parabolic adjoint} is observable at time $T$ from the observation sets $\omega$ and $J$ if there exists a constant $K_T>0$ such that any solution of system \eqref{eq: parabolic adjoint} satisfies
\begin{equation} \label{eq: observability inequality introduction}
\Vert u(T) \Vert_{L^2(\Omega)} + \Vert \eta(T) \Vert_{H^2(\mathcal I)} + \Vert \partial_t \eta(T) \Vert_{L^2(\mathcal I)} \leq K_T \left( \Vert u \Vert_{L^2((0,T) \times \omega)} + \Vert \partial_t \eta\Vert_{L^2((0,T) \times J)} \right).
\end{equation}
\end{definition}

Very classically, system \eqref{eq: parabolic} is null-controllable at time $T$ if and only if system \eqref{eq: parabolic adjoint} is observable at time $T$, and in that case the constants $C_T$ and $K_T$ can be chosen equal (see Section \ref{section_functional_setting}). We prove the following result:

\begin{theorem} \label{thm_observability_intro}
For all $T>0$, and all $\alpha > 0$, the system \eqref{eq: parabolic adjoint} is observable at time $T$ from the observation sets $\omega$ and $J$. Furthermore, there exists a constant $C>0$, independent of $T$, such that $K_T = C e^{\frac{C}{T}}$.
\end{theorem}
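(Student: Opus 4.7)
The plan is to deduce the observability inequality from a global Carleman estimate for the adjoint system \eqref{eq: parabolic adjoint}, and then to pass from an integrated observability to the pointwise estimate at $t=T$ through a simple energy dissipation argument. The main effort is Step 1 below; Steps 2 and 3 are the standard Fursikov--Imanuvilov wrap-up, made quantitative in $T$.

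The first and hardest step is to establish a global Carleman estimate for the coupled system, with a single weight of the form $\varphi(t,x) = \xi(t) \psi(x)$, $\xi(t) \asymp 1/(t(T-t))$, producing an $L^2$-weighted control of the full state $(u, \eta, \partial_t \eta, \partial^2_{x_1}\eta)$ by the localized observations $\|u\|_{L^2((0,T)\times \omega)}$ and $\|\partial_t \eta\|_{L^2((0,T)\times J)}$. Such an estimate is obtained by combining a classical Carleman inequality for the heat equation in $\Omega$ with the refined beam Carleman inequalities of Theorem \ref{poutre3} and Theorem \ref{poutre-theorem}, and then using the coupling conditions $u|_{\Gamma_1} = \partial_t \eta$ and the source $-\partial_n u$ in the beam equation to absorb the trace and flux terms that appear after integration by parts. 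A key quantitative point is that the Carleman parameter $s$ can be chosen proportional to $T+T^2$, which is precisely what delivers the $e^{C/T}$ exponent rather than $e^{C/T^2}$.

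The second step is to discard the exponential weights outside the intermediate subinterval $[T/4, 3T/4]$, on which $\xi$ is bounded below by $4/T^2$ and above by $16/T^2$. Absorbing the resulting weight extrema into an $e^{C/T}$ factor, one obtains an integrated observability
\begin{equation*}
\int_{T/4}^{3T/4} \Big( \|u(t)\|_{L^2(\Omega)}^2 + \|\eta(t)\|_{H^2(\mathcal I)}^2 + \|\partial_t \eta(t)\|_{L^2(\mathcal I)}^2\Big)\, dt
\leq C\, e^{C/T}\Big( \|u\|_{L^2((0,T)\times \omega)}^2 + \|\partial_t \eta\|_{L^2((0,T)\times J)}^2\Big).
\end{equation*}

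Finally, I would define the natural energy
$$
E(t) = \tfrac{1}{2}\|u(t)\|_{L^2(\Omega)}^2 + \tfrac{1}{2}\|\partial_t \eta(t)\|_{L^2(\mathcal I)}^2 + \tfrac{1}{2}\|\partial^2_{x_1}\eta(t)\|_{L^2(\mathcal I)}^2 + \tfrac{1}{2}\|\eta(t)\|_{L^2(\mathcal I)}^2,
$$
and test the heat equation against $u$ and the beam equation against $\partial_t \eta$. The two boundary fluxes $\int_{\Gamma_1} \partial_t \eta \, \partial_n u$ that arise on each side cancel thanks to the interface condition $u = \partial_t \eta$, so a direct computation gives
$$
E'(t) = -\|\nabla u(t)\|_{L^2(\Omega)}^2 - \alpha \|\partial_{x_1} \partial_t \eta(t)\|_{L^2(\mathcal I)}^2 - \alpha \|\partial_t \eta(t)\|_{L^2(\mathcal I)}^2 \leq 0,
$$
hence $E$ is non-increasing. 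In particular $E(T) \leq \frac{2}{T}\int_{T/4}^{3T/4} E(t)\, dt$, which combined with the integrated observability above, together with the equivalence on the torus of $\|\eta\|_{H^2(\mathcal I)}^2$ with $\|\eta\|_{L^2(\mathcal I)}^2 + \|\partial^2_{x_1}\eta\|_{L^2(\mathcal I)}^2$, yields the stated inequality with $K_T = C e^{C/T}$. The main obstacle in this scheme is the coupled Carleman estimate: one must exhibit a single pair of weights $(\xi, \psi)$ that is simultaneously admissible for the interior heat operator, for the damped beam operator, and for the trace/flux coupling on $\Gamma_1$, while keeping all constants explicit in $T$ (and in $\alpha$, which enters through the $\lambda_\alpha$-dependence of the beam Carleman estimates).
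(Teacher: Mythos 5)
Your Steps 2 and 3 (dropping the weights on $[T/4,3T/4]$ and the energy dissipation giving $E'\leq 0$, hence $E(T)\leq \frac{2}{T}\int_{T/4}^{3T/4}E$) are fine and match the paper's final argument. The genuine gap is in Step 1: you assert that the coupled Carleman estimate controls the state by $\|u\|_{L^2((0,T)\times\omega)}$ and $\|\partial_t\eta\|_{L^2((0,T)\times J)}$. What the combination of the heat and beam Carleman inequalities actually yields (Theorem \ref{Carleman-couplé}) is a local observation of $\eta$ on $J$, not of $\partial_t\eta$: the beam estimate valid for every $\alpha>0$ is Theorem \ref{poutre}, whose right-hand side contains $\left(\alpha^{-8}+\alpha^4\right)\iint_{(0,T)\times J}s^7\mu^8\xi_0^7 e^{2s\varphi_0}|\eta|^2$ plus the global term $\alpha^2 s^3\mu^4\iint_{(0,T)\times\mathcal I}\xi_0^3 e^{2s\varphi_0}|\partial_t\eta|^2$, and the coupling $u=\partial_t\eta$ on $\Gamma_1$ is used precisely to absorb this last global term into the boundary terms of the heat Carleman estimate — it does not convert the $J$-observation from $\eta$ into $\partial_t\eta$. (Theorem \ref{poutre3}, which you also invoke, only holds for $\alpha<\alpha^*\approx 1.45$, so it cannot serve for all $\alpha>0$.) Since the adjoint observation operator is $B^*[u,\eta_1,\eta_2]=(u|_\omega,\eta_2|_J)=(u|_\omega,\partial_t\eta|_J)$, an observability inequality with $\|\eta\|_{L^2((0,T)\times J)}$ on the right does not dualize to the control problem, so your scheme as written does not prove the theorem.

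The missing step is the content of Proposition \ref{obs-dteta} (following \cite[Section 5]{Buffe}): apply the weighted estimate to the time-differentiated trajectory $(\partial_t u,\partial_t\eta,\partial_t^2\eta)$, which again solves the adjoint system, so that the $J$-observation becomes one of $\partial_t\eta$; then integrate by parts in time in the resulting $\omega$-term $\iint_{(0,T)\times\omega}\varrho_5^2|\partial_t u|^2$ to return to an observation of $u$, which requires weighted maximal parabolic regularity bounds (with constants $M_\alpha\lesssim \alpha^{-1}+\alpha$) to control $\partial_t^2 u$, and finally the lower bound $\|\mathcal A Y\|_{\mathscr H}\geq \frac{c}{1+\alpha}\|Y\|_{\mathscr H}$ to recover the undifferentiated state from its time derivative. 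Without this additional argument (or an equivalent one), the passage from the Carleman estimate to the observability inequality \eqref{eq: observability inequality introduction} is not justified; with it, your dissipation step then closes the proof exactly as in the paper.
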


The main ingredient in our proof of Theorem \ref{thm_observability_intro} is a new global Carleman estimate for the whole heat-damped beam system \eqref{eq: parabolic adjoint}. This Carleman estimate, which constitutes the main novelty of our study, is of interest in its own right. We present this new result in the next section.

Another novelty of Theorem \ref{thm_observability_intro} is its validity for any positive damping parameter $\alpha$. Maybe unexpectedly, new difficulties appear for large parameters $\alpha$, for which we need the coupling between the heat equation and the damped beam equation to obtain the global Carleman estimate.

\subsection{Carleman estimate for the coupled system}\label{carleman-pour-les deux}

We now present our new global Carleman estimate for the system \eqref{eq: parabolic adjoint}.
First, we need to introduce the following weight functions. We recall that the existence of such functions is proven in \cite[Section 2.2]{Buffe}. We consider the function $\ell$ defined by
$$\ell(t):=t(T-t),~~\text{for}~~t\in (0,T).$$ 
Let $\omega_0$ and $J_0$ be two nonempty open sets satisfying $\omega_0\Subset\omega$ and $J_0\Subset J.$
We consider two smooth functions $\psi_\Omega$ and $\psi_{\mathcal{I}}$ satisfying 
\begin{equation}\label{w1}
\psi_{\Omega}>0~~\text{in}~~\Omega,~~~\psi_\Omega=0~~\text{and}~~\partial_n\psi_{\Omega}=-1~~\text{on}~~\partial\Omega,~~\nabla\psi_\Omega(x)=0\Rightarrow x\in\omega_0.
\end{equation}
\begin{equation}\label{w2}
\psi_{\mathcal{I}}>0~~\text{on}~~\mathcal{I},~~\psi'_{\mathcal{I}}(x_1)=0\Rightarrow x_1\in J_0.
\end{equation}
Let us consider 
\begin{equation} \label{eq_def_PSI} 
\Psi:=\|\psi_\Omega\|_{L^{\infty}(\Omega)}+\|\psi_{\mathcal{I}}\|_{L^{\infty}(\mathcal{I})}\end{equation}
and for $k\geq 2$ and $\lambda\geq \mu>0$, let us define the following functions:
\begin{equation}\label{weights2}
\varphi(t,x_1,x_2)=\frac{e^{\mu\psi_{\mathcal{I}}(x_1)+\lambda\psi_\Omega(x_1,x_2)+8\lambda\Psi}-e^{10\lambda\Psi}}{\ell(t)^{k/2}},~~~~\xi(t,x_1,x_2)=\frac{e^{\mu\psi_{\mathcal{I}}(x_1)+\lambda\psi_\Omega(x_1,x_2)+8\lambda\Psi}}{\ell(t)^{k/2}},
\end{equation}
\begin{equation}\label{weights1}
 \varphi_0(t,x_1)=\frac{e^{\mu\psi_{\mathcal{I}}(x_1)+8\lambda\Psi}-e^{10\lambda\Psi}}{\ell(t)^{k/2}},~~~~\xi_0(t,x_1)=\frac{e^{\mu\psi_{\mathcal{I}}(x_1)+8\lambda\Psi}}{\ell(t)^{k/2}}.
\end{equation}
Since $\psi_\Omega$ vanishs on $\partial\Omega$ we observe that $\varphi_0=\varphi|_{\partial\Omega}$  and $\xi_0=\xi|_{\partial\Omega}$.
\
\\
The Carleman estimate for the adjoint system \eqref{eq: parabolic adjoint} is given in the following theorem. 
Set
\begin{align*}
 H(s, \lambda, u)&:= s^{-1}\iint_{(0,T)\times\Omega}~\xi^{-1}~e^{2s\varphi}\left(|\partial_tu|^2+|\Delta u|^2\right)+s\lambda^2\iint_{(0,T)\times\Omega}~\xi~e^{2s\varphi}|\nabla u|^2\\ 
   &+s^3\lambda^4\iint_{(0,T)\times\Omega}~\xi^3~e^{2s\varphi}|u|^2+2s^3\lambda^3\iint_\Sigma\xi_0^3~e^{2s\varphi_0}|u|^2+2s\lambda\iint_\Sigma\xi_0~e^{2s\varphi_0}\big|\frac{\partial u}{\partial n}\big|^2,
\end{align*}
and
\begin{align*}
 B_\alpha(s, \mu, \eta)&:=  \frac{\alpha^2}{1+\alpha^2}\frac1s\iint_{(0,T)\times\mathcal{I}}\frac1\xi_0~e^{2s\varphi_0}(|\partial^4_{x_1}\eta|^2+|\partial^2_t\eta|^2)+\frac{\alpha^2}{s}\iint_{(0,T)\times\mathcal{I}}\frac1\xi_0~e^{2s\varphi_0}(|\partial_t\partial^2_{x_1}\eta|^2)\\& \notag+\iint_{(0,T)\times\mathcal{I}}e^{2s\varphi_0}\Big(s^7\mu^8\xi^7_0|\eta|^2+s^5\mu^6\xi_0^5|\partial_{x_1}\eta|^2+s^3\mu^4\xi_0^3|\partial^2_{x_1}\eta|^2\\ \notag &+(1+\alpha^2)s^3\mu^4\xi_0^3|\partial_t\eta|^2+s\mu^2\xi_0|\partial^3_{x_1}\eta|^2+(1+\alpha^2)s\mu^2\xi_0|\partial_t\partial_{x_1}\eta|^2\Big).
\end{align*}
\begin{theorem}\label{Carleman-couplé}
Let $\omega$ and $J$ be nonempty open subsets of $\Omega$ and $\mathcal{I}$ respectively and let $k\geq 2$. There exist constants $c_0>0$, $c_1 \geq 1$, $\mu_0\geq 1$ and $\widehat{s}_0\geq 1$ such that, for all $\alpha>0$, for all $\lambda\geq \mu\geq \mu_0$ such that $\lambda\geq c_1(1+1/\alpha^2)$ and $\lambda\in [c_1\alpha^{2/3}\mu^{\frac43}, \frac{1+\alpha^2}{c_1}\mu^2]$, for all $T>0$ and for all $s\geq \widehat{s}_0(1+\alpha)(T^k+T^{k-1})$, the following inequality holds for all $u\in C^1([0,T];C(\overline{\Omega}))$ and $\eta\in C^2([0,T];C^4(\mathcal{I}))$ such that $u=\partial_t \eta$ on $(0,T)\times\Gamma_1$ and $u=0$ on $(0,T)\times\Gamma_0$:
\begin{multline}\label{EqCarleman-couplé}
 H(s, \lambda, u)+ B_\alpha (s, \mu, \eta)\leq c_0\Big( s^3\lambda^4\iint_{(0,T)\times\omega} \xi^3~~e^{2s\varphi}| u|^2+ \left(\alpha^{-8}+\alpha^4\right)s^7\mu^8\iint_{(0,T)\times J}\xi^7_0~e^{2s\varphi_0}|\eta|^2\\ 
 +\iint_{(0,T)\times\Omega}~e^{2s\varphi}|\partial_tu-\Delta u|^2+\iint_{(0,T)\times\mathcal{I}}e^{2s\varphi_0}|\partial^2_t\eta-\alpha\partial^2_{x_1}\partial_t\eta+\alpha\partial_t\eta+\partial^4_{x_1}\eta+\eta+\partial_nu|^2\Big).
\end{multline}
\end{theorem}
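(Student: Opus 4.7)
The plan is to follow the classical two-step decoupling argument for fluid-structure Carleman estimates: I would prove a Carleman inequality for the heat equation alone with weights $(\varphi,\xi)$, then invoke the beam Carleman inequalities already obtained in Theorem~\ref{poutre3} and Theorem~\ref{poutre-theorem} with weights $(\varphi_0,\xi_0)$, and finally glue the two through the coupling $u|_{\Gamma_1}=\partial_t\eta$ and the source $\partial_n u$ that the beam equation sees. The weight functions have been designed so that $\varphi|_{\partial\Omega}=\varphi_0$ and $\xi|_{\partial\Omega}=\xi_0$, which is precisely what is required for the boundary terms produced on the heat side to be expressed in the language of the beam estimate.

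First I would apply a Fursikov--Imanuvilov Carleman inequality for $\partial_t-\Delta$ to $u$. Since $u$ does not vanish on $\Gamma_1$, integration by parts keeps the two boundary contributions $2s^3\lambda^3\iint_\Sigma\xi_0^3 e^{2s\varphi_0}|u|^2$ and $2s\lambda\iint_\Sigma\xi_0 e^{2s\varphi_0}|\partial_n u|^2$ on the left-hand side (the former vanishes on $\Gamma_0$ by the Dirichlet condition), yielding $H(s,\lambda,u)$ bounded by $\iint_Q e^{2s\varphi}|\partial_t u-\Delta u|^2$ and the local observation on $\omega$, valid for $\lambda\ge\mu_0$ and $s\ge\widehat{s}_0(T^k+T^{k-1})$. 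Next I would apply the beam Carleman estimate to $\eta$, feeding it the source $\partial_t^2\eta-\alpha\partial_{x_1}^2\partial_t\eta+\alpha\partial_t\eta+\partial_{x_1}^4\eta+\eta+\partial_n u|_{\Gamma_1}$ coming from the fourth equation of \eqref{eq: parabolic adjoint}. This produces $B_\alpha(s,\mu,\eta)$ on the left and, on the right, the weighted $L^2$ norm of that source plus the local observation $(\alpha^{-8}+\alpha^4)s^7\mu^8\iint_{(0,T)\times J}\xi_0^7 e^{2s\varphi_0}|\eta|^2$; the $\alpha$-dependence of this prefactor is exactly what the two earlier theorems are designed to track, and is the only place where $\alpha$ enters the final right-hand side.

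The crucial step is the absorption of the coupling terms. Expanding the beam source via the triangle inequality creates on the right-hand side a term $\iint e^{2s\varphi_0}|\partial_n u|^2$, which is absorbed by the useful heat boundary term $2s\lambda\iint_\Sigma\xi_0 e^{2s\varphi_0}|\partial_n u|^2$ as soon as $s\lambda\xi_0\ge 1$, and this is what forces $s\ge\widehat{s}_0(1+\alpha)(T^k+T^{k-1})$ together with $\lambda\ge c_1$. Conversely, the heat boundary trace $2s^3\lambda^3\xi_0^3 e^{2s\varphi_0}|u|^2=2s^3\lambda^3\xi_0^3 e^{2s\varphi_0}|\partial_t\eta|^2$ on $\Gamma_1$ must be compatible with the $(1+\alpha^2)s^3\mu^4\xi_0^3|\partial_t\eta|^2$ contribution of $B_\alpha$, and this is exactly what the two-sided condition $c_1\alpha^{2/3}\mu^{4/3}\le\lambda\le c_1^{-1}(1+\alpha^2)\mu^2$ encodes: the lower bound matches the top-order powers of $\lambda$ against $\mu$, and the upper bound allows $B_\alpha$ to swallow the heat trace after rearrangement. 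The extra assumption $\lambda\ge c_1(1+1/\alpha^2)$ keeps all intermediate signs correct after subtraction, especially in the small-$\alpha$ regime where factors of $\alpha^{-2}$ surface. I expect the main obstacle to be precisely this bookkeeping of $(s,\lambda,\mu,\alpha)$-powers across the absorption, particularly for large $\alpha$ where the term $\alpha\partial_{x_1}^2\partial_t\eta$ creates genuinely new cross contributions that are absent from the undamped analysis of \cite{Buffe}; once it is carried out, summing the two estimates yields \eqref{EqCarleman-couplé}.
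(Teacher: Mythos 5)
Your overall two-step scheme (heat Carleman with retained boundary terms, beam Carleman, gluing through $u=\partial_t\eta$ on $\Gamma_1$ and $\partial_n u$ in the beam source) is indeed the strategy of the paper, but your first step overstates what the heat estimate alone can give, and this is a genuine gap. With nonzero data on $\Gamma_1$, the integrations by parts do not leave only the two favorable terms $2s^3\lambda^3\iint_\Sigma\xi_0^3e^{2s\varphi_0}|u|^2$ and $2s\lambda\iint_\Sigma\xi_0e^{2s\varphi_0}|\partial_nu|^2$ on the left: they also produce sign-indefinite boundary contributions involving the traces of $\partial_t u$ and $\partial_{x_1}u$ on $\Gamma_1$ (the terms $I^1_\Sigma$, $I^6_\Sigma$, $I^7_\Sigma$ of \eqref{termes-au-bord}). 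After Young's inequality these leave on the right the quantities $\frac{1}{s\lambda}\iint_\Sigma\xi_0^{-1}|\partial_t v|^2+s\lambda\iint_\Sigma\xi_0|\partial_{x_1}v|^2$, which the heat equation cannot control by itself; they must be rewritten via $u=\partial_t\eta$ in terms of $\partial_t^2\eta$, $\partial_t\eta$, $\partial_t\partial_{x_1}\eta$ and absorbed by a small multiple $\bigl(\tfrac{1+\alpha^2}{\lambda\alpha^2}+\tfrac{\lambda}{(1+\alpha^2)\mu^2}\bigr)B_\alpha(s,\mu,\eta)$ as in \eqref{Est8-03}. This is precisely where the hypotheses $\lambda\geq c_1(1+1/\alpha^2)$ and $\lambda\leq\frac{1+\alpha^2}{c_1}\mu^2$ come from; so the claim that the heat Carleman alone yields $H(s,\lambda,u)\lesssim$ source $+$ local observation on $\omega$ is not correct, and the parameter conditions are then attributed to the wrong mechanisms.

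The second gap concerns the central absorption the theorem is really about. Since the statement must hold for every $\alpha>0$, the beam input has to be Theorem \ref{poutre} (Theorem \ref{poutre3} only covers $\alpha<\alpha^*$), and its right-hand side \eqref{poutre-theorem} carries the global remainder $\alpha^2 s^3\mu^4\iint_{(0,T)\times\mathcal I}\xi_0^3e^{2s\varphi_0}|\partial_t\eta|^2$, observed on all of $\mathcal I$. Your outline never addresses this term, yet eliminating it is the whole point of coupling with the heat equation: as in \eqref{Eq21-04}, one rewrites it through $\partial_t\eta=u$ on $\Gamma_1$ as at most $\frac{\alpha^2\mu^4}{\lambda^3}H(s,\lambda,u)$ and absorbs it into the left-hand side, which is exactly what the lower bound $\lambda\geq c_1\alpha^{2/3}\mu^{4/3}$ encodes. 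Your ``compatibility'' sentence runs in the opposite direction (``the upper bound allows $B_\alpha$ to swallow the heat trace''), i.e.\ it proposes to absorb a favorable left-hand-side term, which is unnecessary, while the term that genuinely needs absorbing is missing. Two smaller inaccuracies: the factor $(1+\alpha)$ in the requirement $s\geq\widehat s_0(1+\alpha)(T^k+T^{k-1})$ is inherited from the beam Carleman estimate itself, not from absorbing $\iint e^{2s\varphi_0}|\partial_nu|^2$ (which is indeed absorbed by the $s\lambda\iint_\Sigma\xi_0e^{2s\varphi_0}|\partial_nu|^2$ term, as you say); and the final right-hand side also requires replacing the beam source $f_\eta=\partial_t^2\eta-\alpha\partial_{x_1}^2\partial_t\eta+\partial_{x_1}^4\eta$ by the full residual including $\alpha\partial_t\eta+\eta+\partial_n u$, which generates additional lower-order terms to be reabsorbed for $s_0$ large.
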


\begin{remark}\label{RqCarleman-couplé} 
In Theorem \ref{Carleman-couplé} one can choose $\lambda$ proportional to $\alpha^{2/3}$ if $\alpha\geq 1$  and $\lambda$ proportional to $1/\alpha^{2}$ if $\alpha<1$. More precisely, we can choose $\theta>0$ large enough
 and $\tau\in [c_1\theta^{4/3}, \theta^2/c_1]$ such that the assumptions of Theorem \ref{Carleman-couplé} are satisfied with $\lambda=\tau\alpha^{2/3}$ and $\mu=\theta$ for all $\alpha\geq 1$, or with $\lambda=\frac{\tau}{\alpha^2}$ and $\mu=\frac{\theta}{\alpha}$ for all $0<\alpha<1$. 
\end{remark}

The proof of Theorem \ref{Carleman-couplé} is provided in Appendix \ref{preuve-carleman}.
Our main strategy of proof consists in first to prove a Carleman estimate for the heat equation and the damped beam equation separately. Next, the two Carleman estimates are combined to get the above global estimate.
 The Carleman estimates for the damped beam equation is proven in Section \ref{carleman-poutre}.
 The combination argument is provided in Section \ref{Sect33}.
\\
\\
Note that we obtain naturally an observation term in $\eta$ for the damped beam equation. To prove the observability inequality \eqref{eq: observability inequality introduction}, we need to get an observation term in $\partial_t\eta.$ To do so, we rely on the strategy used in \cite[Section 5]{Buffe}.
Let $\varphi_1,$ $\varphi_2,$ $\xi_1$ and $\xi_2$ be weight functions depending only on the time variable $t$, defined by
\begin{equation*}\label{poids en t1_intro}
\varphi_1(t)=\frac{e^{8\lambda\Psi}-e^{10\lambda\Psi}}{\ell^{k/2}(t)},~~\xi_1(t)=\frac{e^{8\lambda\Psi}}{\ell^{k/2}(t)},
\end{equation*}
\begin{equation*}\label{poids en t2_intro}
\varphi_2(t)=\frac{e^{9\lambda\Psi}-e^{10\lambda\Psi}}{\ell^{k/2}(t)},~~\xi_2(t)=\frac{e^{9\lambda\Psi}}{\ell^{k/2}(t)}.
\end{equation*}
We prove the following:
\begin{proposition}\label{obs-dteta}
Let $\omega$ and $J$ be nonempty open subsets of $\Omega$ and $\mathcal{I}$ respectively and let $\lambda$ and $s$ satisfy the conditions stated in Theorem \ref{Carleman-couplé}. There exist $c_0>0$ such that any solution of \eqref{eq: parabolic adjoint} satisfies:
\begin{multline}\label{dteta}
\int_0^Ts^{3}\lambda^2~\xi^{3}_1~e^{2s\varphi_1}\left(\|u\|^2_{L^2(\Omega)}+\|\eta\|^2_{H^2(\mathcal{I})}+\|\partial_t\eta\|^2_{L^2(\mathcal{I})}\right)\\
\leq c_0\Big( (\alpha^{-4}+\alpha^{16})\iint_{(0,T)\times\omega}s^{11}\lambda^{4}\xi_2^{11}~e^{4s\varphi_2-2s\varphi_1} |u|^2
+ (\alpha^{-8}+\alpha^{10})\iint_{(0,T)\times J}s^{7}\lambda^8\xi_2^{7}~e^{2s\varphi_2}|\partial_t\eta|^2\Big).
\end{multline}
\end{proposition}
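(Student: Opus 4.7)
The plan is to derive \eqref{dteta} directly from the coupled Carleman estimate of Theorem \ref{Carleman-couplé}. The only non-trivial task is to convert the observation of $|\eta|^2$ on $J$ given by the Carleman into an observation of $|\partial_t\eta|^2$, at the price of an enlarged weight and an auxiliary observation of $u$ on $\omega$, following the approach of \cite[Section 5]{Buffe}. First, I apply Theorem \ref{Carleman-couplé} to the adjoint solution $(u,\eta)$: since the PDE source terms in \eqref{eq: parabolic adjoint} vanish, this yields
\begin{equation*}
H(s,\lambda,u)+B_\alpha(s,\mu,\eta)\le c_0\Big(s^3\lambda^4\iint_{(0,T)\times\omega}\xi^3 e^{2s\varphi}|u|^2+(\alpha^{-8}+\alpha^4)s^7\mu^8\iint_{(0,T)\times J}\xi_0^7 e^{2s\varphi_0}|\eta|^2\Big).
\end{equation*}
The left-hand side of \eqref{dteta} is then recovered as a lower bound for $H+B_\alpha$: since $\psi_\Omega,\psi_\mathcal{I}\ge 0$, we have the pointwise bounds $\xi_1\le\xi_0\le\xi$ and $\varphi_1\le\varphi_0$, so the $s^3\lambda^4\xi^3 e^{2s\varphi}|u|^2$-term of $H$ dominates $s^3\lambda^2\xi_1^3 e^{2s\varphi_1}\|u\|_{L^2(\Omega)}^2$, while the four entries of $B_\alpha$ involving $|\eta|^2,|\partial_{x_1}\eta|^2,|\partial^2_{x_1}\eta|^2$ and $(1+\alpha^2)|\partial_t\eta|^2$ together dominate $s^3\lambda^2\xi_1^3 e^{2s\varphi_1}(\|\eta\|_{H^2(\mathcal I)}^2+\|\partial_t\eta\|_{L^2(\mathcal I)}^2)$; the extra powers of $\lambda$ are traded against powers of $\mu$ thanks to the constraint $\lambda\le(1+\alpha^2)\mu^2/c_1$ of Theorem \ref{Carleman-couplé}.

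The central step is to dominate the $\eta$-observation by a $\partial_t\eta$-observation. I fix a cutoff $\chi\in C^\infty_c(J;[0,1])$ with $\chi\equiv 1$ on $J_0$, and set $E(t,x_1):=\int_0^t\eta(\sigma,x_1)\,d\sigma$, so that $\partial_t E=\eta$. Because the weight $t\mapsto\xi_0^7 e^{2s\varphi_0}(t,x_1)$ vanishes at the time endpoints, an integration by parts in $t$ gives
\begin{equation*}
\iint_{(0,T)\times J}\chi^2\xi_0^7 e^{2s\varphi_0}\eta^2 \;=\; -\iint\chi^2\partial_t(\xi_0^7 e^{2s\varphi_0})\,\eta\,E \;-\;\iint\chi^2\xi_0^7 e^{2s\varphi_0}\,\partial_t\eta\,E.
\end{equation*}
Using the pointwise bound $|\partial_t(\xi_0^7 e^{2s\varphi_0})|\lesssim sT\,\xi_0\cdot\xi_0^7 e^{2s\varphi_0}$ (from $|\ell'/\ell|\lesssim T/\ell$ and the condition $s\gtrsim(1+\alpha)(T^k+T^{k-1})$) together with Young's inequality, the right-hand side is dominated by $\varepsilon\iint\chi^2\xi_0^7 e^{2s\varphi_0}\eta^2$ (absorbed on the left), $C_\varepsilon\iint\chi^2\xi_0^7 e^{2s\varphi_0}(\partial_t\eta)^2$ (the desired observation after comparing $\xi_0,\varphi_0$ with $\xi_2,\varphi_2$), and a remainder $C_\varepsilon\iint\chi^2(sT)^2\xi_0^9 e^{2s\varphi_0}E^2$. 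To dispose of the $E^2$-term, I use the beam equation of \eqref{eq: parabolic adjoint} to replace the integrand $\eta$ inside $E$ by $-\partial_t^2\eta+\alpha\partial^2_{x_1}\partial_t\eta-\alpha\partial_t\eta-\partial^4_{x_1}\eta-\partial_n u$, then integrate by parts in $x_1$ against $\chi^2$ to distribute the spatial derivatives. Each resulting contribution in $\eta$-derivatives is absorbed by $B_\alpha(s,\mu,\eta)$, whereas the interior trace $\partial_n u$ produces a residual that must be absorbed via an auxiliary Carleman estimate for the heat equation alone with a shifted time-only weight $\varphi_2$; this last absorption is responsible for the weight $e^{4s\varphi_2-2s\varphi_1}=e^{2s\varphi_2}\cdot e^{2s(\varphi_2-\varphi_1)}$ (and the large power $s^{11}\lambda^4$) appearing in the $u$-observation on the right-hand side of \eqref{dteta}.

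The main obstacle is the disposal of the remainder $\iint\chi^2(sT)^2\xi_0^9 e^{2s\varphi_0}E^2$: any naive estimate of $E$ reintroduces $|\eta|^2$, so one genuinely needs the structure of the beam equation to replace $\eta$ by higher-order derivatives absorbable by $B_\alpha$, followed by a second, heat-only Carleman inequality with a stricter weight in $t$ to convert the propagated $\partial_n u$-contribution into an interior observation on $\omega$. The careful bookkeeping of the $\alpha$-dependence through these operations -- in particular the $\alpha$-weighted boundary trace term of $H$ and the $(1+\alpha^2)$-weighted $\partial_t\eta$-term of $B_\alpha$ -- is what produces the final prefactors $\alpha^{-4}+\alpha^{16}$ on the $u$-observation and $\alpha^{-8}+\alpha^{10}$ on the $\partial_t\eta$-observation in \eqref{dteta}.
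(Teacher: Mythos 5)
Your route is genuinely different from the paper's, and it has a gap at its central step: the disposal of the $E^2$ remainder. Writing $E(t,x_1)=\int_0^t\eta\,d\sigma$ and substituting the beam equation for the integrand produces, after integrating the time-derivative terms, (a) initial-data contributions $\eta_1-\alpha\partial^2_{x_1}\eta_0+\alpha\eta_0$ which persist for all interior times $t$, are multiplied by a Carleman weight that does not vanish there, and are controlled by nothing on the right-hand side of \eqref{dteta}; and (b) time-nonlocal terms $\int_0^t\partial^4_{x_1}\eta\,d\sigma$ and $\int_0^t\partial_nu\,d\sigma$. These cannot be ``absorbed by $B_\alpha$'' (or by $H$): both functionals control only pointwise-in-time Carleman-weighted quantities, and the weight $e^{2s\varphi_0}$ is exponentially degenerate as $t\to0^+$, so any Cauchy--Schwarz in time that reinstates the weight inside the integral produces a factor $\int_0^t e^{-2s\varphi_0}\,d\sigma$ which is unbounded. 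The vague appeal to ``an auxiliary Carleman estimate for the heat equation with weight $\varphi_2$'' does not address this time-nonlocality, and consequently neither the weight $s^{11}\lambda^4\xi_2^{11}e^{4s\varphi_2-2s\varphi_1}$ on the $u$-observation nor the stated powers $\alpha^{-4}+\alpha^{16}$ and $\alpha^{-8}+\alpha^{10}$ are justified by your argument. (Morally, a purely local-in-$J$ manipulation cannot trade $\eta$ for $\partial_t\eta$; some global structural input is needed.)

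The paper proceeds differently. Since the adjoint system is autonomous, $(\partial_tu,\partial_t\eta,\partial_t^2\eta)$ solves the same system, and applying the coupled Carleman estimate to this time-differentiated solution (inequality \eqref{Carl-gele3}) directly produces the observation of $\partial_t\eta$ on $J$ — no antiderivative is needed. The two remaining conversions are then: (i) the undifferentiated left-hand side of \eqref{dteta} is recovered from the differentiated one via the coercivity of $\mathcal A$ (Proposition \ref{positivité-de-A}, $\|Y\|_{\mathscr H}\lesssim(1+\alpha)\|\mathcal AY\|_{\mathscr H}=(1+\alpha)\|\partial_tY\|_{\mathscr H}$, giving \eqref{estimate-MT2}) — this is the global ingredient your local argument lacks; and (ii) the $\partial_tu$-observation on $\omega$ is converted into a $u$-observation by integration by parts in time (\eqref{IPP dtu}), with the resulting $\partial_t^2u$ term estimated through the weighted maximal-regularity/smoothing estimates \eqref{estimate-semig} applied twice (systems \eqref{od-e-adjoint1}, \eqref{od-e-adjoint2}), which is exactly where the weight $e^{4s\varphi_2-2s\varphi_1}$, the power $s^{11}$, and (together with $M_\alpha\lesssim\alpha^{-1}+\alpha$ and the $(1+\alpha)$ of Proposition \ref{positivité-de-A}) the final $\alpha$-exponents come from. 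Your opening step — bounding the left-hand side of \eqref{dteta} by $H+B_\alpha$ using $\lambda\le(1+\alpha^2)\mu^2/c_1$ — does match the paper's \eqref{Carl-gele1}, but the core of your proof as written does not close.
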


The proof of Proposition \ref{obs-dteta} is given in Section \ref{section_observability}.
The Theorem \ref{thm_observability_intro} follows then by a standard dissipation argument, see Theorem \ref{observability} below.

\subsection{Further results -- Carleman estimates for the one-dimensional beam equation}

In the course of our study, we obtain Carleman estimates for the one-dimensional beam equation. 
Such estimates might be of interest on their own. We therefore give them independently in the following. The proofs of the following results are given in Section  \ref{carleman-poutre}.

\subsubsection{The undamped beam equation}

We first obtain the following Carleman estimates. 

\begin{theorem}\label{poutre2}
Let $J$ be a nonempty open subset of $\mathcal{I}$ and let $k\geq 2$. There exist $c_0>0$, $\mu_0\geq 1$ and $\widehat{s}_0\geq 1$ such that for all $\mu\geq\mu_0$, for all $\alpha\geq 0$, for all $T>0$ and for all $s\geq \widehat{s}_0(1+\alpha)(T^k+T^{k-1})$, the following inequality holds for all function $\eta\in C^2([0,T];C^4(\mathcal{I}))$:
\begin{multline}\label{poutre-theorem3}
\iint_{(0,T)\times\mathcal{I}}e^{2s\varphi_0}\Big(s^7\mu^8\xi^7_0|\eta|^2+s^5\mu^6\xi_0^5|\partial_{x_1}\eta|^2+s^3\mu^4\xi_0^3|\partial^2_{x_1}\eta|^2\\
+(1+\alpha^2)s^3\mu^4\xi_0^3|\partial_t\eta|^2+s\mu^2\xi_0|\partial^3_{x_1}\eta|^2+(1+\alpha^2)s\mu^2\xi_0|\partial_t\partial_{x_1}\eta|^2\Big)\\
\leq c_0\Big(\iint_{(0,T)\times J}e^{2s\varphi_0}\Big(s^7\mu^8\xi^7_0|\eta|^2+(1+\alpha^2)s^3\mu^4\xi_0^3|\partial_t\eta|^2+s\mu^2\xi_0|\partial^3_{x_1}\eta|^2+(1+\alpha^2)s\mu^2\xi_0|\partial_t\partial_{x_1}\eta|^2\Big)\\
+\alpha^2 s^3\mu^4\iint_{(0,T)\times\mathcal{I}}\xi_0^3~e^{2s\varphi_0}|\partial_t \eta|^2+\iint_{(0,T)\times\mathcal{I}}e^{2s\varphi_0}|\partial^2_t\eta-\alpha\partial^2_{x_1}\partial_t\eta+\partial^4_{x_1}\eta|^2\Big).
\end{multline} 
\end{theorem}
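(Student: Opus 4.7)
The plan is to adapt the standard Carleman method for the Euler--Bernoulli beam operator to the damped operator $L := \partial_t^2 - \alpha\partial_{x_1}^2\partial_t + \partial_{x_1}^4$, tracking the dependence on $\alpha$ carefully. Set $v := e^{s\varphi_0}\eta$ and form the conjugated operator $L_s v := e^{s\varphi_0} L(e^{-s\varphi_0} v)$. Split $L_s = P_1 + P_2 + R$, where $P_1$ is the symmetric part and $P_2$ the skew-symmetric part with respect to the $L^2((0,T)\times\mathcal{I})$ inner product, and $R$ collects the remaining terms of lower order in $s$. Expanding $\|L_s v - Rv\|^2$ and keeping only the cross term yields
\begin{equation*}
2(P_1 v, P_2 v)_{L^2} \leq \|L_s v\|_{L^2}^2 + \|Rv\|_{L^2}^2,
\end{equation*}
with $\|L_s v\|_{L^2}^2 = \iint_{(0,T)\times\mathcal{I}} e^{2s\varphi_0}|L\eta|^2$, matching the source term in \eqref{poutre-theorem3}.

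The heart of the proof is the explicit computation of $(P_1 v, P_2 v)_{L^2}$ via repeated integration by parts. Since $\varphi_0$ depends only on $t$ and $x_1$, the calculation takes place on the torus $\mathcal{I}$, where periodicity eliminates $x_1$-boundary terms. Using the identities $\partial_{x_1}\varphi_0 = \mu\psi'_{\mathcal{I}}\xi_0$, $|\partial_{x_1}^j\varphi_0| \leq C\mu^j\xi_0$, and $|\partial_t\varphi_0| \leq CT\xi_0^{1+2/k}$, together with the condition $\psi'_{\mathcal{I}}\neq 0$ outside $J_0$, the computation should yield pointwise lower bounds on $\mathcal{I}\setminus J_0$ of exactly the weighted quadratic forms appearing on the LHS of \eqref{poutre-theorem3}. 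The threshold $s \geq \widehat{s}_0(1+\alpha)(T^k + T^{k-1})$ is chosen precisely so that the dominant Carleman contributions absorb the lower-order terms in $\|Rv\|^2$ and the $\alpha$-dependent cross terms.

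To recover an estimate global in $x_1$ while observing only on $J$, I would use the standard cutoff argument: the $J_0$-contribution arising from integration over all of $\mathcal{I}$ is absorbed into the observation integral over $J \supset J_0$. Reverting to $\eta$ via $v = e^{s\varphi_0}\eta$, and using $\partial_{x_1}^j v = e^{s\varphi_0}(\partial_{x_1}^j\eta + \text{terms of lower order in }s\mu^j\xi_0)$ and $\partial_t v = e^{s\varphi_0}(\partial_t\eta + s\,\partial_t\varphi_0\cdot\eta)$, then completes the proof of \eqref{poutre-theorem3}.

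The main obstacle is handling the damping term $-\alpha\partial_{x_1}^2\partial_t$, which is the only term mixing time and space derivatives. It generates cross contributions of order $\alpha$ and $\alpha^2$ in $(P_1 v, P_2 v)$ that must be balanced against the bare beam terms. This accounts for the $(1+\alpha^2)$ factors on the $|\partial_t\eta|^2$ and $|\partial_t\partial_{x_1}\eta|^2$ terms on the LHS, arising from the square of the damping coefficient absorbed into the leading terms, and for the residual global term $\alpha^2 s^3\mu^4 \iint \xi_0^3 e^{2s\varphi_0}|\partial_t\eta|^2$ on the RHS, which captures a damping-induced contribution that cannot be localized to $J$ by this strategy alone. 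Ensuring that $c_0$, $\mu_0$, $\widehat{s}_0$ are all independent of $\alpha\geq 0$ is the central technical difficulty.
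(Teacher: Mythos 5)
Your overall skeleton (conjugation $v=e^{s\varphi_0}\eta$, splitting of the conjugated operator into symmetric and skew-symmetric parts plus a remainder, computation of the cross product by integration by parts on the torus, absorption of remainders for $s\geq \widehat s_0(1+\alpha)(T^k+T^{k-1})$, and localization of the observation) is indeed the paper's approach. The genuine gap is in your treatment of the principal cross terms generated by the damping, namely the terms of the form $\alpha s^4\iint(\partial_{x_1}\varphi_0)^3(\partial^2_{x_1}\varphi_0)\,\partial_t v\,\partial_{x_1}v$ and $\alpha s^2\iint(\partial_{x_1}\varphi_0)(\partial^2_{x_1}\varphi_0)\,\partial^2_{x_1}v\,\partial_t\partial_{x_1}v$ appearing in \eqref{produit-croise-I_l}. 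These have exactly the same order in $s$ and $\xi_0$ as the leading positive terms, so they cannot be absorbed by enlarging $s$ (your threshold only controls the lower-order remainders), and balancing them directly against the bare beam terms via Young's inequality with constants independent of $\alpha$, as you propose, fails for large $\alpha$: that direct balancing is precisely what restricts the stronger estimate of Theorem \ref{poutre3} to $\alpha<\alpha^*\approx 1.45$. Relatedly, the $(1+\alpha^2)$ factors on the left of \eqref{poutre-theorem3} do not come from ``absorbing the square of the damping coefficient''; they come from genuinely positive cross-product contributions of the damping (the terms with coefficients $3\alpha^2+6\beta$ and $\alpha^2+6$ in \eqref{produit-croise-I_l}).

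What is missing is the specific mechanism that produces the global right-hand-side term $\alpha^2 s^3\mu^4\iint\xi_0^3e^{2s\varphi_0}|\partial_t\eta|^2$, which your proposal mentions only as an unexplained residual. In the paper, before applying Young's inequality to the two dangerous cross terms one substitutes $\partial_t v=s(\partial_t\varphi_0)v+e^{s\varphi_0}\partial_t\eta$ (identity \eqref{exp}); the piece $s(\partial_t\varphi_0)v$ is lower order and is absorbed thanks to the factor $1+\alpha$ in $s_0$, while the piece $e^{s\varphi_0}\partial_t\eta$ is estimated by Cauchy--Schwarz against the positive terms in $|\partial_{x_1}v|^2$ and $|\partial^3_{x_1}v|^2$ away from $J_0$, leaving exactly the admissible global term $\alpha^2 s^3\mu^4\iint\xi_0^3e^{2s\varphi_0}|\partial_t\eta|^2$ (note that this involves $e^{2s\varphi_0}|\partial_t\eta|^2$, not $|\partial_t v|^2$; the two differ by a term that is large in $s$). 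Without this step, the argument as you describe it either requires a smallness restriction on $\alpha$ or yields constants blowing up with $\alpha$, contradicting the claim that $c_0$, $\mu_0$, $\widehat s_0$ are independent of $\alpha\geq 0$.
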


This Carleman estimate is valid for $\alpha=0$, but with observation terms of order strictly greater than zero (i.e, $\partial^3_{x_1}\eta$, $\partial_{x_1}\partial_t \eta$) on $J$. As a consequence, we cannot obtain the expected controllability result for the beam equation from this inequality.

However, note that in the case $\alpha = 0$, Theorem \ref{poutre-theorem3} implies immediately the following unique continuation property: if $\eta$ satisfies $\partial^2_t \eta + \partial^4_{x_1} \eta = 0$ on $(0,T)\times \mathcal I$, and $\eta \equiv 0$ on $(0,T) \times J$, then $\eta \equiv 0$ on $(0,T) \times \mathcal I$.

The proof of Theorem \ref{poutre2} is given in Section \ref{section_proof_CarlEstpoutre_alphanonneg}.

\subsubsection{The damped beam equation}

We now focus on the case with active damping, \textit{i.e.} $\alpha > 0$. In that context, we obtain the following Carleman estimate.

\begin{theorem}\label{poutre}
 Let $\alpha>0$. Let $J$ be a nonempty open subset of $\mathcal{I}$ and let $k\geq 2$. There exist $c_0>0$, $\mu_0\geq 1$ and $\widehat{s}_0\geq 1$ such that for all $\mu\geq\mu_0$, for all $\alpha>0$, for all $T>0$ and for all $s\geq \widehat{s}_0(1+\alpha)(T^k+T^{k-1})$, the following inequality holds for all function $\eta\in C^2([0,T];C^4(\mathcal{I}))$:
\begin{multline}\label{poutre-theorem}
\frac{\alpha^2}{1+\alpha^2}\frac1s\iint_{(0,T)\times\mathcal{I}}\frac1\xi_0~e^{2s\varphi_0}(|\partial^4_{x_1}\eta|^2+|\partial^2_t\eta|^2)+\frac{\alpha^2}{s}\iint_{(0,T)\times\mathcal{I}}\frac1\xi_0~e^{2s\varphi_0}|\partial_t\partial^2_{x_1}\eta|^2\\
+\iint_{(0,T)\times\mathcal{I}}e^{2s\varphi_0}\Big(s^7\mu^8\xi^7_0|\eta|^2+s^5\mu^6\xi_0^5|\partial_{x_1}\eta|^2+s^3\mu^4\xi_0^3|\partial^2_{x_1}\eta|^2\\+(1+\alpha^2)s^3\mu^4\xi_0^3|\partial_t\eta|^2+s\mu^2\xi_0|\partial^3_{x_1}\eta|^2+(1+\alpha^2)s\mu^2\xi_0|\partial_t\partial_{x_1}\eta|^2\Big)\\
\leq c_0\Big(\left(\alpha^{-8}+\alpha^4\right)\iint_{(0,T)\times J}s^7\mu^8\xi^7_0~e^{2s\varphi_0}|\eta|^2+\iint_{(0,T)\times\mathcal{I}}e^{2s\varphi_0}|\partial^2_t\eta-\alpha\partial^2_{x_1}\partial_t\eta+\partial^4_{x_1}\eta|^2\\
+\alpha^2 s^3\mu^4\iint_{(0,T)\times\mathcal{I}}\xi_0^3~ e^{2s\varphi_0}|\partial_t\eta|^2\Big).
\end{multline} 
\end{theorem}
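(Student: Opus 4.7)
The plan is to deduce the estimate \eqref{poutre-theorem} from the already-established Carleman estimate \eqref{poutre-theorem3} of Theorem \ref{poutre2}. Comparing the two statements, the gap is twofold: (i) the right-hand side of \eqref{poutre-theorem} retains only a $|\eta|^2$ observation on $J$ (with an $\alpha$-dependent prefactor $\alpha^{-8}+\alpha^{4}$), whereas \eqref{poutre-theorem3} carries the extra local observations of $\partial_t\eta$, $\partial_t\partial_{x_1}\eta$ and $\partial^3_{x_1}\eta$; and (ii) the left-hand side of \eqref{poutre-theorem} contains three new ``parabolic-type'' contributions $\frac{\alpha^2}{1+\alpha^2}\frac{1}{s\xi_0}e^{2s\varphi_0}(|\partial^2_t\eta|^2+|\partial^4_{x_1}\eta|^2)$ and $\frac{\alpha^2}{s\xi_0}e^{2s\varphi_0}|\partial_t\partial^2_{x_1}\eta|^2$ which are absent from \eqref{poutre-theorem3}. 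Throughout, set $L\eta:=\partial^2_t\eta-\alpha\partial^2_{x_1}\partial_t\eta+\partial^4_{x_1}\eta$, so that the forcing in the right-hand side is exactly $L\eta$.

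\textbf{Step 1 -- upgrading the local observations on $J$.} Pick nested open sets $J_0\Subset J_0'\Subset J$ and a smooth cutoff $\chi\in C^{\infty}(\mathcal I)$ with $\chi\equiv 1$ on $J_0'$ and support inside $J$. For each of the three offending observation quantities, I would test the identity $L\eta=f$ against a weighted expression of the form $\chi^{2}e^{2s\varphi_0}\xi_0^{p}\cdot(\text{low-order derivative of }\eta)$ -- typically $\chi^{2}e^{2s\varphi_0}\xi_0^{p}\eta$ to generate $|\partial_t\eta|^{2}$ from the $\partial^{2}_{t}$ piece after one integration by parts in $t$, and $\chi^{2}e^{2s\varphi_0}\xi_0^{p}\partial^{2}_{x_1}\eta$ to recover $|\partial^{3}_{x_1}\eta|^{2}$ from the $\partial^{4}_{x_1}$ piece after two integrations by parts in $x_1$. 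The periodicity in $x_1$ eliminates spatial boundary terms, the vanishing of $e^{2s\varphi_0}$ as $t\to 0^{+},T^{-}$ disposes of the temporal ones, and Young's inequality with a small parameter leaves only the desired observation $s^{7}\mu^{8}\xi_{0}^{7}e^{2s\varphi_0}|\eta|^{2}$ on $J$, lower-order terms absorbable by the LHS of \eqref{poutre-theorem3}, and a term controlled by $\iint e^{2s\varphi_0}|L\eta|^{2}$. Carefully tracking the powers of $\alpha$ produced by each appearance of the damping term $\alpha\partial^{2}_{x_1}\partial_{t}\eta$ and balancing the two regimes $\alpha\leq 1$ and $\alpha\geq 1$ is exactly what yields the uniform prefactor $\alpha^{-8}+\alpha^{4}$.

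\textbf{Step 2 -- inserting the parabolic regularity terms.} To bring the new LHS contributions into play, I test $L\eta=f$ against $\alpha\,\frac{e^{2s\varphi_0}}{s\xi_0}\,\partial^{2}_{x_1}\partial_{t}\eta$ in $L^{2}((0,T)\times\mathcal I)$; the quadratic contribution gives precisely $-\alpha^{2}\iint\frac{e^{2s\varphi_0}}{s\xi_0}|\partial^{2}_{x_1}\partial_{t}\eta|^{2}$, and the cross terms $(\partial^{2}_{t}\eta+\partial^{4}_{x_1}\eta)\cdot\partial^{2}_{x_1}\partial_{t}\eta$ are absorbed by Young's inequality together with integrations by parts that shift derivatives onto the weight, the remainders being controlled by the LHS of \eqref{poutre-theorem3}. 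To pick up $|\partial^{2}_{t}\eta|^{2}$ and $|\partial^{4}_{x_1}\eta|^{2}$ separately, I use the pointwise bound $|\partial^{2}_{t}\eta+\partial^{4}_{x_1}\eta|^{2}\le 2|L\eta|^{2}+2\alpha^{2}|\partial^{2}_{x_1}\partial_{t}\eta|^{2}$ integrated against $\frac{e^{2s\varphi_0}}{s\xi_0}$, combined with the standard identity
\[
\iint \frac{e^{2s\varphi_0}}{s\xi_0}\,\partial^{2}_{t}\eta\,\partial^{4}_{x_1}\eta = -\iint \frac{e^{2s\varphi_0}}{s\xi_0}|\partial_{t}\partial^{2}_{x_1}\eta|^{2} + \text{lower-order terms},
\]
obtained by two integrations by parts in $x_1$ and two in $t$. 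Isolating $|\partial^{2}_{t}\eta|^{2}+|\partial^{4}_{x_1}\eta|^{2}$ then produces the coefficient $\alpha^{2}/(1+\alpha^{2})$ claimed on the left-hand side, the denominator $1+\alpha^2$ being the unavoidable price of absorbing the $2\alpha^2|\partial_t\partial^2_{x_1}\eta|^2$ remainder into the LHS.

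\textbf{Main obstacle.} The decisive difficulty is the \emph{uniform} quantitative $\alpha$-tracking in Step 1: every Young inequality and cutoff integration by parts must be calibrated so that the resulting constant fits into the single prefactor $\alpha^{-8}+\alpha^{4}$ valid for all $\alpha>0$, which forces one to recycle estimates differently in the small- and large-damping regimes. A secondary, purely bookkeeping point is to verify that the lower-order byproducts of the cutoff derivatives remain absorbable into \eqref{poutre-theorem3} under the threshold $s\geq \widehat{s}_{0}(1+\alpha)(T^{k}+T^{k-1})$ -- the linear growth in $\alpha$ here is exactly what is needed so that the Carleman weights dominate the $\alpha$-perturbations produced by the damping. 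Once Step 1 is settled, Step 2 reduces essentially to algebraic manipulations of the equation.
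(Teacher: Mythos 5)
There is a genuine gap, and it is structural rather than a matter of bookkeeping. First, your Step 2 cannot be run from the statement of Theorem \ref{poutre2} alone. When you test $L\eta$ against $\alpha\,\tfrac{e^{2s\varphi_0}}{s\xi_0}\,\partial_t\partial^2_{x_1}\eta$, the integrations by parts of the cross term $\iint \partial^2_t\eta\cdot\alpha\tfrac{e^{2s\varphi_0}}{s\xi_0}\partial_t\partial^2_{x_1}\eta$ leave a remainder of the type $\alpha\iint \partial_{x_1}\big(\tfrac{e^{2s\varphi_0}}{s\xi_0}\big)\,\partial^2_t\eta\,\partial_t\partial_{x_1}\eta$, which contains $\partial^2_t\eta$ globally; this is \emph{not} controlled by the left-hand side of \eqref{poutre-theorem3}, and it is precisely one of the quantities you are trying to create, so your claim that the remainders are ``controlled by the LHS of \eqref{poutre-theorem3}'' is circular. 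The paper avoids this by not deducing Theorem \ref{poutre} from the statement of Theorem \ref{poutre2} at all: it returns to the intermediate estimate \eqref{midle-estimate}, whose left-hand side retains $\|\mathcal M_1z\|^2_{L^2}+\|\mathcal M_2z\|^2_{L^2}$ for the conjugated variable $z=e^{s\varphi_0}\eta$, and then reads $\alpha\partial_t\partial^2_{x_1}z$ off $\mathcal M_{21}$ (see \eqref{DefM21}) and $\partial^2_tz+\partial^4_{x_1}z$ off $\mathcal M_{11}$ (see \eqref{DefM11}); the high-order terms are thus obtained algebraically from quantities already present on the left. If you insist on starting from \eqref{poutre-theorem3} you must at least set up and close a coupled system of inequalities for the two unknown high-order quantities, which your sketch does not do.

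Second, the order of your two steps is the reverse of what the estimate requires, and this is where the constant $\alpha^{-8}+\alpha^4$ is at stake. In the paper the high-order terms are established \emph{first} (Section \ref{Sect3}, leading to \eqref{Eqs15-03-2025}) precisely because the removal of the local observations of $\partial^3_{x_1}z$, $\partial_t\partial_{x_1}z$ and $\partial_tz$ is done by integrating those local terms by parts and absorbing the resulting $\partial^4_{x_1}z$, $\partial_t\partial^2_{x_1}z$ and $\partial^2_tz$ factors into the newly created left-hand side; in particular the local $(1+\alpha^2)s^3\mu^4\xi_0^3|\partial_tz|^2$ term is traded in a single Young step against the global $|\partial^2_tz|^2$ term, which is what yields the factor $\tfrac{(1+\alpha^5)^2}{\alpha^6}$, i.e.\ $\alpha^4$ for large $\alpha$. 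In your ordering this tool is unavailable, so your Caccioppoli-type multiplier $\chi^2e^{2s\varphi_0}\xi_0^p\eta$ converts the local $(1+\alpha^2)$-weighted $|\partial_t\eta|^2$ into local spatial terms ($|\partial^2_{x_1}\eta|^2$, $|\partial_{x_1}\eta|^2$) carrying the $(1+\alpha^2)$ factor; since the corresponding global terms on the left of \eqref{poutre-theorem3} carry no such factor, each step of the subsequent local reduction squares this factor and you land at $\alpha^8$ on the final $|\eta|^2$ observation, not $\alpha^4$. Finally, you never explain how to remove the local $|\partial_t\partial_{x_1}\eta|^2$ observation: it is one of the three offending terms, it is regenerated at full strength and at the same weight by your own treatment of $|\partial^3_{x_1}\eta|^2$ (the $\partial^2_t\eta$ part of the operator, after integration by parts in $t$ then $x_1$, produces exactly $+\iint \chi^2e^{2s\varphi_0}\xi_0^p|\partial_t\partial_{x_1}\eta|^2$), and its removal necessarily goes through the damping term, hence through a factor $\tfrac{1+\alpha^2}{\alpha}$ that is the actual source of the negative powers of $\alpha$; none of this is addressed in your proposal.
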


We now make several comments on the result: first of all, estimate \eqref{poutre-theorem} is better than estimate \eqref{poutre-theorem3} because, in \eqref{poutre-theorem}, we retrieve an observation of uniquely $\eta$ on $J$. However, it is also clear that the estimate \eqref{poutre-theorem} is uniquely valid for $\alpha$ positive, as the inverse of $\alpha^8$ appears on the right-hand side.

Nonetheless, estimate \eqref{poutre-theorem} is not sufficient to obtain the observability of the damped beam equation. This is due to the presence of the last term in the right-hand side, involving an observation of $\partial_t \eta$ on the whole domain $\mathcal I$. 
For large values of the parameter $\alpha$, this term cannot be absorbed into the left-hand side, regardless of the choice of the parameters $s$ and $\mu$. In contrast, the same term can be absorbed for $\alpha$ small enough. It is therefore natural to try to determine the range of paramaters $\alpha$ that can be directly absorbed into the left-hand side. An answer is provided in Theorem \ref{poutre3} below.
In our study, we get rid of this term by coupling the Carleman estimate \eqref{poutre-theorem} with a Carleman estimate for the heat equation. Indeed, the term $\partial_t  \eta$ is directly linked to the heat solution via the boundary conditions (see \eqref{eq: parabolic adjoint}).  In other words, we deeply use the coupling between the heat equation and the beam equation.
\medskip

In \cite{Sourav}, the author obtain a Carleman estimate for the one-dimensional damped beam equation similar to \eqref{poutre-theorem} but without the last term on the right-hand side involving $\partial_t \eta$, with the particular choice of damping parameter $\alpha = 1$. It is natural to ask whether we can generalize the result of \cite{Sourav} for any positive value of the parameter $\alpha$, which would give the observability of the beam equation alone. It turns out that this is not the case, as the techniques developed in \cite{Sourav} fail for large values of the parameter $\alpha$. More precisely, setting
\begin{equation}\label{DefAlphastar}
\alpha^*=\sqrt{\frac{6(2-\beta^*)}{4+\beta^*}}\quad \mbox{where $\beta^*$ is the unique real root of $ \;36\beta^3+111\beta^2+77\beta-58$},
\end{equation}
which gives $\beta^* =  0,437765644120981\pm 10^{-15}$ and $\alpha^*=1,45333768702221\pm 10^{-15}$, we obtain using the strategy of \cite{Sourav} the following Carleman estimate.
\begin{theorem}\label{poutre3}
 Let $\alpha\in (0,\alpha^*)$. Let $J$ be a nonempty open subset of $\mathcal{I}$ and let $k\geq 2$. There exist $c_0>0$, $\mu_0\geq 1$ and $s_0\geq 1$ such that for all $\mu\geq\mu_0$, for all $T>0$ and for all $s\geq s_0(T^k+T^{k-1})$, the following inequality holds for all function $\eta\in C^2([0,T];C^4(\mathcal{I}))$:
\begin{multline}\label{poutre-theorem2}
\iint_{(0,T)\times\mathcal{I}}e^{2s\varphi_0}\Big(s^7\mu^8\xi^7_0|\eta|^2+s^5\mu^6\xi_0^5|\partial_{x_1}\eta|^2+s^3\mu^4\xi_0^3|\partial^2_{x_1}\eta|^2+s^3\mu^4\xi_0^3|\partial_t\eta|^2+s\mu^2\xi_0|\partial^3_{x_1}\eta|^2+s\mu^2\xi_0|\partial_t\partial_{x_1}\eta|^2\Big)\\
+
\frac1s\iint_{(0,T)\times\mathcal{I}}\frac1\xi_0~e^{2s\varphi_0}(|\partial^4_{x_1}\eta|^2+|\partial^2_t\eta|^2+|\partial_t\partial^2_{x_1}\eta|^2)\\
\leq c_0\Big(\iint_{(0,T)\times J}s^7\mu^8\xi^7_0~e^{2s\varphi_0}|\eta|^2+\iint_{(0,T)\times\mathcal{I}}e^{2s\varphi_0}|\partial^2_t\eta-\alpha\partial^2_{x_1}\partial_t\eta+\partial^4_{x_1}\eta|^2\Big).
\end{multline} 
\end{theorem}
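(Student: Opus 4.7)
The plan is to follow the "conjugate-and-split" strategy underlying \cite{Sourav}, keeping the damping parameter $\alpha$ explicit throughout. Introduce the conjugated unknown $v := e^{s\varphi_0}\eta$ and decompose
\begin{equation*}
e^{s\varphi_0}\bigl(\partial_t^2\eta - \alpha\partial_{x_1}^2\partial_t\eta + \partial_{x_1}^4\eta\bigr) = P_1 v + P_2 v + R_s v,
\end{equation*}
where $P_1$ gathers the formally self-adjoint contributions of the conjugated operator, $P_2$ the anti-self-adjoint ones, and $R_s$ collects lower-order remainders absorbable for $s$ large. The usual identity $\|P_1 v\|^2 + \|P_2 v\|^2 + 2\langle P_1 v, P_2 v\rangle = \|e^{s\varphi_0}L\eta - R_s v\|^2$, with $L\eta:=\partial_t^2\eta - \alpha\partial_{x_1}^2\partial_t\eta + \partial_{x_1}^4\eta$, reduces the proof to extracting, from the cross term $\langle P_1 v, P_2 v\rangle$, each of the weighted squares listed on the left-hand side of \eqref{poutre-theorem2}.

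The bulk of the calculation is the expansion of $\langle P_1 v, P_2 v\rangle$ by repeated integration by parts in $t$ and $x_1$. Boundary terms in $t$ vanish because $e^{s\varphi_0}$ is flat at $t=0,T$, and there are no boundary contributions in $x_1$ since $\mathcal{I}$ is a torus. Each pairing produces monomials of the form $c\,s^{a}\mu^{b}\iint \xi_0^{a}|\psi_{\mathcal I}'|^{b} e^{2s\varphi_0}|\partial_t^{j}\partial_{x_1}^{k}\eta|^2$, and coefficients must be tracked carefully: the dominant pieces are those with the highest power of $s\mu\xi_0$ and non-degenerate $|\psi_{\mathcal{I}}'|$, while everything else is absorbed by choosing $\mu$ large (beating derivatives of $\psi_{\mathcal{I}}$) and then $s\geq s_0(T^k+T^{k-1})$ (beating time derivatives of the weight, which is where the precise dependence on $T$ comes from).

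The key obstacle, and the origin of the threshold $\alpha^*$, appears among these dominant coefficients: those in front of $s^3\mu^4\xi_0^3|\partial_t\eta|^2$, $s\mu^2\xi_0|\partial_t\partial_{x_1}\eta|^2$ and the associated mixed contributions depend non-trivially on $\alpha$, and their positive combination reduces to the coercivity of a $2\times 2$ quadratic form $Q(\alpha)$. The right strategy is to perform a linear combination of a handful of integration-by-parts identities parametrised by a free weight $\beta$; after reparametrisation $\alpha^2 = 6(2-\beta)/(4+\beta)$, the positivity $\det Q(\alpha)>0$ becomes the cubic inequality $36\beta^3+111\beta^2+77\beta-58>0$, which holds precisely for $\beta>\beta^*$, i.e.\ $\alpha<\alpha^*$. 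Identifying the correct linear combination so that this algebraic criterion governs coercivity exactly is the delicate part of the argument; beyond $\alpha^*$ no such rearrangement of $Q(\alpha)$ is nonnegative, which explains why the rest of the paper must resort to the coupling with the heat equation (see Theorem \ref{Carleman-couplé}).

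Once coercivity is in hand, a standard cut-off argument localises the observation to $J$: pick $J_1$ with $J_0\Subset J_1 \Subset J$ and $\chi\in C^{\infty}(\mathcal I)$ equal to $1$ outside $J_1$ and supported in $\mathcal{I}\setminus J_0$; applying the global estimate to $\chi\eta$ and then integrating by parts against the fourth-order operator, the remaining higher-derivative contributions supported in $J$ are absorbed into $\iint_{(0,T)\times J}s^7\mu^8\xi_0^7 e^{2s\varphi_0}|\eta|^2$ at the cost of extra powers of $s\xi_0$, using Young's inequality to shift weights from the observation integral to the bulk one. This yields \eqref{poutre-theorem2}.
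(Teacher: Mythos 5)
Your proposal follows essentially the same route as the paper's proof: conjugate by $e^{s\varphi_0}$, split the conjugated operator with a free parameter $\beta$, expand the cross product by integrations by parts (no boundary terms on the torus, weight flat at $t=0,T$), and control the two $\alpha$-dependent mixed terms by Young's inequality, the optimization over $\beta$ producing exactly the cubic $36\beta^3+111\beta^2+77\beta-58$ and the threshold $\alpha^*$, before trading the local derivative observations for the single observation of $|\eta|^2$ on $J$ via cut-off integrations by parts. The only cosmetic deviations are that you encode the threshold as positive-definiteness of a quadratic form after the reparametrisation $\alpha^2=6(2-\beta)/(4+\beta)$, which is equivalent to the paper's condition $\alpha_1^*(\beta)=\alpha_2^*(\beta)$, and that the top-order terms $|\partial_t^2\eta|^2$, $|\partial_{x_1}^4\eta|^2$, $|\partial_t\partial_{x_1}^2\eta|^2$ are in fact recovered from the retained norms $\|P_1v\|^2+\|P_2v\|^2$ (as in the paper) rather than from the cross term itself.
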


This result classically leads to the observability of the corresponding damped beam equation, for any $\alpha$ in the range $(0,\alpha^*)$. To the best of our knowledge, 
obtaining a global Carleman estimate of type \eqref{poutre-theorem2}, valid for large values of the damping parameter $\alpha$, remains an open problem. We underline that a Carleman estimate can be obtained by factorizing the Beam operator as two heat-like operator. But this leads to an inequality with strictly lower exponents of $s$ than as in \eqref{poutre-theorem2} (see e.g. \cite{Fu2020}) that does not permit to handle coupled system such as \eqref{eq: parabolic adjoint}.

\begin{remark}\label{Rkcout2}
Although the dependence of $s$ is not explicitly given in \cite{Sourav}, a check of the proof shows that $s$ can be assumed to satisfy $s\geq s_0(T^4+T^2)$ there, for some $s_0$ independent on $T$. This would lead to a cost of the control proportional to $\exp(C/T^2)$. Here we improve this dependence in $T$: for $k\geq 2$ the parameter $s$ can be chosen $s\geq s_0(T^k+T^{k-1})$ which leads to a cost of the control proportional to $\exp(C/T)$ (see Theorem \ref{observability}).
\end{remark}

\subsection{Scientific context}\label{Sect1.4}

The main contributions that are closely connected to the present work are \cite{Buffe, Buffe2025} where the authors investigate the local null-controllability of an incompressible fluid in a periodic channel, described by the Navier-Stokes equations, with a structure located on the boundary of the fluid domain, described by a damped beam equation. In \cite{Buffe} two distributed controls are considered, one in the fluid domain and one on the beam. In \cite{Buffe2025} the authors manage to suppress the control on the structure by using a Fourier decomposition, but this approach is very specific to the geometry. We underline that in \cite{Buffe, Buffe2025} only the particular case of a structural damping coefficient $\alpha=1$ is considered. We may also mention \cite{buffe2022} where the damped beam equation is replaced by a heat equation. 

The controllability properties of fluid-structure interaction systems have been tackled mainly in
the case where the structure is described by a finite dimensional equation. For the case of a rigid body immersed in a fluid we refer to \cite{MG, MA, UT, IA, Raymond-Vanninathan2009}. For a fluid coupled with a structure described by a system of ordinary differential equations corresponding to a ﬁnite dimensional approximation of equations modeling deformations of an
elastic body we refer to \cite{Raymond-Vanninathan2010, Lequeurre2013}. 

Let us also mention some works concerning the controllability of the beam equation. In \cite{Sourav}, the null-controllability of the damped beam equation is proved, when the damping parameter is equal to one. The proof relies on a Carleman inequality of type \eqref{poutre-theorem2} for $\alpha=1$. In \cite{Fu2020}, a Carleman inequality for the Beam equation is obtained by factorizing the Beam equation as two heat-like equations. However, this results in a Carleman inequality with strictly lower exponents for s than in \eqref{poutre-theorem2}. We also mention the reference \cite{Avd} where the authors proved the null-controllability for the beam equation with structural damping using the moments method, and the reference \cite{Miller} where the author proves the same result using a spectral approach.

\bigskip

\indent The outline of the article is as follows: in Section \ref{section_functional_setting}, we precise the functional setting associated with our controllability problem. In Section \ref{SectCarlemanEst}, we
obtain the Carleman estimate for the coupled system, by coupling a Carleman estimate for the heat equation with a Carleman estimate for the damped beam equation. We deduce the observability inequality of the adjoint system in Section \ref{section_observability}. Finally, Section \ref{carleman-poutre} is devoted to the proofs of several Carleman estimates for the damped or undamped beam equation.

\section{Functional setting} \label{section_functional_setting}
We define the following operators associated with the damped beam equation \eqref{eq: parabolic}. Let $\Lambda : L^2(\mathcal{I})\to L^2(\partial\Omega)$ defined by  
\begin{equation}\label{defLambda}
(\Lambda\zeta)(x_1,x_2)= \left\{\begin{array}{ll}\zeta(x_1)& \mbox{ if }(x_1,x_2)\in \Gamma_1,\medskip  \\0& \mbox{ if }(x_1,x_2)\in \Gamma_0.\end{array}\right.
\end{equation}
The adjoint $\Lambda^*: L^2(\partial\Omega) \to L^2(\mathcal{I})$ is given by
\begin{equation}\label{defLambdastar}
(\Lambda^* w)(x_1)= w(x_1,1)\quad (x_1\in \mathcal{I}). 
\end{equation}
We introduce the following linear operators $A_i$ with domains $\mathcal{D}(A_i)$ (for $i=1,2$) defined on $L^2(\mathcal{I})$,
\begin{equation}\label{DefA1}
    A_1\zeta=\partial_{x_1}^4\zeta+\zeta,~~~~~~~~\mathcal{D}(A_1)=H^4(\mathcal{I})
\end{equation}
\begin{equation}\label{DefA2}
A_2\zeta=\alpha\left(-\partial_{x_1}^2\zeta+\zeta\right),~~~~~~~~\mathcal{D}(A_2)=H^2(\mathcal{I}).
\end{equation}
One can check that
$$
\mathcal{D}(A_1^{1/2})=H^2(\mathcal{I}).
$$
Then the damped beam equation in (\ref{eq: parabolic}) writes as:
$$\partial^2_t\zeta+A_1\zeta+ A_2\partial_t\zeta=-\Lambda^* \partial_nw+\mathbbm{1}_{J} h .$$
We define the space 
$$\mathscr{H}=\left\{w\in L^2(\Omega),~~\zeta_1\in \mathcal{D}(A_1^{1/2}),~~\zeta_2\in L^2(\mathcal{I})\right\}$$
equipped with the scalar product:
$$\left((w,\zeta_1,\zeta_2),(\widetilde{w},\widetilde{\zeta}_1,\widetilde{\zeta}_2)\right)_{\mathscr{H}}=\int_\Omega w~\widetilde{w}+\int_{\mathcal{I}}A_1^{1/2} \zeta_1 ~ A_1^{1/2} \widetilde{\zeta}_1+\int_{\mathcal{I}}\zeta_2~ \widetilde{\zeta}_2.$$
We define the linear operator $\mathcal{A}:\mathcal{D}(\mathcal{A})\subset \mathscr{H}\to \mathscr{H}$ by
\begin{equation}\label{non-b-opDomain}
\mathcal{D}(\mathcal{A})=\left\{(w,\zeta_1,\zeta_2)\in H^2(\Omega)\times H^4(\mathcal{I})\times H^2(\mathcal{I})~:~w=\Lambda \zeta_2~~\text{on}~\partial\Omega\right \}.
\end{equation}
and 
\begin{equation}\label{non-b-op}
\mathcal{A}\begin{bmatrix}
w\\
\zeta_1\\
\zeta_2
\end{bmatrix}=\begin{bmatrix}
  \Delta w\\
  \zeta_2\\
  -A_1\zeta_1-A_2\zeta_2-\Lambda^*\partial_nw
\end{bmatrix}.
\end{equation}
From an easy adaptation of the proof of \cite[Propositions 3.4, 3.5 and 3.11]{MT} we deduce the following result.
\begin{proposition}\label{PropAnalytiqueEtStable}
The operator $\mathcal{A}$ defined by \eqref{non-b-opDomain}, \eqref{non-b-op} is the infinitesimal generator of an analytic and uniformly stable semigroup on  $\mathscr{H}$.
\end{proposition}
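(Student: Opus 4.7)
The plan is to verify in turn three properties: that $\mathcal{A}$ generates a $C_0$-semigroup on $\mathscr{H}$ (via the Lumer--Phillips theorem), that this semigroup is analytic (via a sectorial resolvent estimate), and that it is uniformly exponentially stable (via spectral analysis combined with compactness of the resolvent).

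\emph{Dissipativity and m-dissipativity.} For $X=(w,\zeta_1,\zeta_2)\in\mathcal{D}(\mathcal{A})$, I would compute $(\mathcal{A}X,X)_{\mathscr{H}}$ directly. Green's formula on $\Omega$ applied to $\int_\Omega(\Delta w)\,w$ produces a boundary term $\int_{\Gamma_1}\zeta_2\,\partial_n w$ (using $w|_{\Gamma_0}=0$ and $w|_{\Gamma_1}=\zeta_2$), which is cancelled exactly by the coupling term $-\int_{\mathcal{I}}\Lambda^*\partial_n w\cdot\zeta_2$ appearing in the third component; the two cross terms $(A_1^{1/2}\zeta_2,A_1^{1/2}\zeta_1)$ and $-(A_1\zeta_1,\zeta_2)$ cancel by self-adjointness of $A_1$. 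One is left with
$$
(\mathcal{A}X,X)_{\mathscr{H}}=-\|\nabla w\|_{L^2(\Omega)}^2-(A_2\zeta_2,\zeta_2)_{L^2(\mathcal{I})}\leq 0,
$$
since $A_2=\alpha(-\partial^2_{x_1}+I)$ is positive. For m-dissipativity, I would fix some $\lambda>0$ and solve $(\lambda I-\mathcal{A})X=(f,g,h)$ by eliminating $\zeta_2=\lambda\zeta_1-g$ and reducing to a coupled elliptic system for $(w,\zeta_1)\in H^1(\Omega)\times H^2(\mathcal{I})$ with non-homogeneous boundary condition $w|_{\Gamma_1}=\lambda\zeta_1-g$, $w|_{\Gamma_0}=0$. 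Lifting the boundary data and applying Lax--Milgram on a coercive bilinear form built from $-\Delta$, $A_1$ and $\lambda A_2$ yields a unique solution, and elliptic regularity recovers membership in $\mathcal{D}(\mathcal{A})$.

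\emph{Analyticity.} This is the main obstacle. Following the strategy of \cite[Proposition 3.5]{MT}, I would establish the sectorial estimate $\|(\lambda I-\mathcal{A})^{-1}\|_{\mathcal{L}(\mathscr{H})}\leq C/|\lambda|$ on a sector $\{\lambda\in\mathbb{C}:|\arg\lambda|<\pi/2+\theta\}$ for some $\theta\in(0,\pi/2)$. Testing the resolvent equation against $X$ and separating real and imaginary parts, one combines the dissipation identity above with elliptic regularity for the Dirichlet Laplacian, trace estimates to control the inhomogeneous lifting $w|_{\Gamma_1}=\zeta_2$, and the structural damping provided by $A_2$ to absorb the remaining boundary contributions. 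The additional lower-order terms $+\zeta$ and $+\alpha\partial_t\zeta$ present in our beam equation contribute only favorable positive terms and leave the argument of \cite{MT} unaffected, which is why the adaptation is routine.

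\emph{Uniform stability.} Since the resolvent $\mathcal{A}^{-1}$ is compact on $\mathscr{H}$ by elliptic regularity, the spectrum $\sigma(\mathcal{A})$ consists only of eigenvalues of finite multiplicity. For an analytic semigroup, uniform exponential stability is equivalent to $\sigma(\mathcal{A})\subset\{\mathrm{Re}\,\lambda<0\}$; since dissipativity already ensures $\sigma(\mathcal{A})\subset\{\mathrm{Re}\,\lambda\leq 0\}$, it suffices to rule out eigenvalues on $i\mathbb{R}$. Suppose $\mathcal{A}X=i\beta X$ with $X\neq 0$. The dissipation identity forces $\nabla w\equiv 0$ on $\Omega$ and $(A_2\zeta_2,\zeta_2)=0$ on $\mathcal{I}$; positivity of $A_2$ yields $\zeta_2\equiv 0$, then $w|_{\Gamma_1}=\zeta_2=0$ together with $w|_{\Gamma_0}=0$ and $\nabla w=0$ give $w\equiv 0$. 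The third equation of $\mathcal{A}X=i\beta X$ reduces to $A_1\zeta_1=-i\beta\zeta_2=0$, and positivity of $A_1$ gives $\zeta_1=0$. Hence $i\mathbb{R}\cap\sigma(\mathcal{A})=\emptyset$, which combined with analyticity concludes the proof.
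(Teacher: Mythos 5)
Your outline is correct, and at the only genuinely delicate point -- the sectorial resolvent estimate yielding analyticity -- it rests on exactly the same ingredient as the paper, which proves this proposition in a single line as an easy adaptation of \cite[Propositions 3.4, 3.5 and 3.11]{MT}. The additional details you supply (the dissipativity identity with cancellation of the $\Lambda^*\partial_n w$ boundary term against the coupling term and of the $A_1$ cross terms, Lax--Milgram for the resolvent equation, and the compact-resolvent/no-imaginary-eigenvalue argument which, combined with analyticity, gives a strictly negative spectral bound and hence uniform exponential stability) are standard and correct, so your proof is essentially the paper's approach with the routine steps written out.
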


A consequence of Proposition \ref{PropAnalytiqueEtStable} is that for $F\in L^2(0,T;\mathscr{H})$ and $Y_0\in \mathscr{H}$, the linear system 
\begin{equation} \label{eq_forwardprob_semigroupform0}
\frac{d Y}{dt}=\mathcal{A}Y+F~~\text{in}~~(0,T),~~Y(0)=Y_0,
\end{equation}
admits a unique ''classical solution", that is a solution $Y$ that belongs to $H^1(\varepsilon,T;\mathscr{H})\cap L^2(\varepsilon,T;\mathcal{D}(\mathcal{A}))\cap C([0,T];\mathscr{H})$ for all $\varepsilon>0$ (see \cite[Definition 3.1 p.129 and Proposition 3.8 p.145]{BDDM}). 

In particular, for $(w_0,\zeta_0,\zeta_1) \in L^2(\Omega) \times H^2(\mathcal I) \times L^2(\mathcal I)$ and $(G,H) \in L^2(Q) \times  L^2((0,T) \times \mathcal I)$, if we set
\begin{equation}\label{VarYetY0}
Y=\begin{bmatrix}
w\\
\zeta\\
\partial_t\zeta
\end{bmatrix},\qquad Y_0=\begin{bmatrix}
    w_0\\
    \zeta_0\\
    \zeta_1
\end{bmatrix}
\end{equation}
and 
$$
F=\begin{bmatrix}
G\\
0\\
H
\end{bmatrix}
$$
then system \eqref{eq: forward system} can be rewritten in the form \eqref{eq_forwardprob_semigroupform0}, and it implies the existence and uniqueness of a solution to \eqref{eq: forward system} satisfying \eqref{RegClassic1} and \eqref{RegClassic2}.

Using that $0$ is in the resolvent set of $\mathcal A$ (by Proposition \ref{PropAnalytiqueEtStable}), we deduce the following result:
\begin{proposition}\label{positivité-de-A}
There exists a constant $c>0$, such that for all non-negative values of the parameter $\alpha$,
any solution $Y = (w,\zeta,\partial_t\zeta)$ of \eqref{eq_forwardprob_semigroupform0} with $F = 0$ satisfies
\begin{equation}\label{estimate-MT}
\|\partial_t[w, \zeta, \partial_t\zeta](t)\|_{\mathscr{H}}=\|\mathcal{A}[w, \zeta, \partial_t\zeta](t)\|_{\mathscr{H}}\geq \frac{c}{1+\alpha}\|[w, \zeta, \partial_t\zeta](t)\|_{\mathscr{H}}\qquad \forall t\in (0,T).
\end{equation} 
\end{proposition}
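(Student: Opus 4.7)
The plan is to translate the claim into a resolvent estimate on $\mathcal A$ and then to solve the associated boundary-value problem componentwise in order to track the dependence on $\alpha$. Since $Y$ satisfies $\partial_t Y = \mathcal A Y$ for every $t$, we have $\|\partial_t Y(t)\|_{\mathscr H} = \|\mathcal A Y(t)\|_{\mathscr H}$, so the inequality to be proved is equivalent to
\begin{equation*}
\|Y\|_{\mathscr H} \le \frac{1+\alpha}{c}\,\|\mathcal A Y\|_{\mathscr H} \qquad \text{for every } Y\in\mathcal D(\mathcal A).
\end{equation*}
By Proposition \ref{PropAnalytiqueEtStable} the semigroup $e^{t\mathcal A}$ is uniformly exponentially stable, hence $0\in\rho(\mathcal A)$ and $\mathcal A^{-1}\in\mathcal L(\mathscr H)$ is well defined; what remains is therefore to show that $\|\mathcal A^{-1}\|_{\mathcal L(\mathscr H)}\le (1+\alpha)/c$ uniformly in $\alpha\ge 0$.

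To do so, I would fix $F=(f,g,h)\in\mathscr H$ (so $f\in L^2(\Omega)$, $g\in\mathcal D(A_1^{1/2})=H^2(\mathcal I)$, $h\in L^2(\mathcal I)$) and unwind the equation $\mathcal A Y=F$ for $Y=(w,\zeta_1,\zeta_2)\in\mathcal D(\mathcal A)$ by reading off \eqref{non-b-op} line by line. The second line gives immediately $\zeta_2=g$, hence $\|\zeta_2\|_{L^2(\mathcal I)}\le\|F\|_{\mathscr H}$ independently of $\alpha$. The first line of \eqref{non-b-op} together with the trace condition in \eqref{non-b-opDomain} identifies $w$ as the unique solution of the Dirichlet problem $\Delta w=f$ in $\Omega$ with $w=\Lambda g$ on $\partial\Omega$; classical elliptic regularity on the smooth rectangle $\Omega$ then yields $\|w\|_{H^2(\Omega)}\le C\bigl(\|f\|_{L^2(\Omega)}+\|g\|_{H^{3/2}(\mathcal I)}\bigr)\le C\|F\|_{\mathscr H}$, with a constant $C$ that does not involve $\alpha$. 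Finally the third line of \eqref{non-b-op} reads $A_1\zeta_1=-A_2g-\Lambda^*\partial_n w-h$; using the explicit form $A_2g=\alpha(-\partial_{x_1}^2 g+g)$ and the trace inequality $\|\Lambda^*\partial_n w\|_{L^2(\mathcal I)}\le C\|w\|_{H^2(\Omega)}$ one obtains $\|A_1\zeta_1\|_{L^2(\mathcal I)}\le C(1+\alpha)\|F\|_{\mathscr H}$. Since $A_1\ge I$, the elementary chain $\|A_1^{1/2}\zeta_1\|_{L^2}^2=\langle A_1\zeta_1,\zeta_1\rangle\le\|A_1\zeta_1\|_{L^2}\|A_1^{1/2}\zeta_1\|_{L^2}$ then gives $\|A_1^{1/2}\zeta_1\|_{L^2(\mathcal I)}\le C(1+\alpha)\|F\|_{\mathscr H}$.

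Summing the three estimates produces $\|Y\|_{\mathscr H}\le C(1+\alpha)\|F\|_{\mathscr H}$, which is the desired bound on $\mathcal A^{-1}$. The only subtle point to anticipate is the sharpness of the factor $1+\alpha$ and the absence of any spurious $\alpha^{-1}$: this is ensured by the order in which one resolves the system, namely $\zeta_2\to w\to\zeta_1$, so that $A_2$ appears only through the source term $A_2g$ in the equation for $\zeta_1$ and is dominated there by the fourth-order principal part $A_1$, rather than as a leading operator. The elliptic bound for $w$ is completely $\alpha$-free, and the linear growth in $\alpha$ comes exclusively from the $L^2(\mathcal I)$ norm of $A_2 g$, which matches the statement exactly.
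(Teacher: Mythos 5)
Your argument is correct. Note that the paper does not actually write out a proof of this proposition: it simply states that it follows ``by the same steps as in the proof of [MT, Proposition 3.5]'', so your contribution is precisely the self-contained computation that the paper delegates to that citation, and it recovers the stated $\frac{c}{1+\alpha}$ dependence. Your reduction is sound: by analyticity (Proposition \ref{PropAnalytiqueEtStable}) one has $Y(t)\in\mathcal D(\mathcal A)$ for $t>0$, so the claim is indeed a bound $\|Y\|_{\mathscr H}\leq C(1+\alpha)\|\mathcal A Y\|_{\mathscr H}$ on $\mathcal D(\mathcal A)$, and your componentwise resolution of $\mathcal A Y=F$ in the order $\zeta_2\to w\to\zeta_1$ is exactly what isolates the damping operator $A_2$ as a lower-order source term $A_2 g$, yielding a constant that grows only linearly in $\alpha$ (the elliptic estimate for the Dirichlet problem and the trace bound for $\partial_n w$ being $\alpha$-free, and $\|A_1^{1/2}\zeta_1\|\leq\|A_1\zeta_1\|$ since $A_1\geq I$). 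Two cosmetic remarks: $\Omega$ is a periodic strip (a cylinder, with $\partial\Omega$ two disjoint circles), not a rectangle, which in fact makes the $H^2$ elliptic regularity you invoke unproblematic since there are no corners; and the invertibility of $\mathcal A$ that you borrow from the uniform stability could equally be read off from your own construction (solve $\zeta_2=g$, then the Dirichlet problem for $w$, then invert $A_1$), so the quantitative conclusion does not secretly depend on any $\alpha$-dependent stability constant.
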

\noindent Proposition \ref{positivité-de-A} can be proven similarly as \cite[Proposition 3.5]{MT}. In particular, by following the same steps as in the proof of \cite[Proposition 3.5]{MT} we observe that the constant in \eqref{estimate-MT} can be chosen of the form $\frac{c}{1+\alpha}$ with $c>0$ independent of $\alpha$.

If $Y_0=0$ we also have the following usefull maximal regularity property: for $F\in L^2(0,T;\mathscr{H})$, the solution of 
\begin{equation} \label{eq_forwardprob_semigroupform}
\frac{d Y}{dt}=\mathcal{A}Y+F~~\text{in}~~(0,T),~~Y(0)=0,
\end{equation}
belongs to $H^1(0,T;\mathscr{H})\cap L^2(0,T;\mathcal{D}(\mathcal{A}))$ and satisfies
\begin{equation}\label{estimate-semig}
  \|Y\|_{L^2(0,T;\mathcal{D}(\mathcal{A}))}+\|Y\|_{H^1(0,T; \mathscr{H})}\leq  M_\alpha \|F\|_{L^2(0,T; \mathscr{H})}.
\end{equation}
{Note that it is not straightforward to explicit the dependence in $\alpha$ of the constant $M_\alpha$ in \eqref{estimate-semig} from the resolvent estimate in \cite[Proposition 3.11]{MT}. However, by adapting the approach of \cite[Section 3]{M-T-2022} one can obtain that it is of the form $M_\alpha=c(\alpha^{-1}+\alpha)$ with $c>0$ independent of $\alpha>0$.

\smallskip

Next, we introduce the control operator $B:L^2(\omega)\times L^2(J)\to \mathscr{H}$ defined by
\begin{equation}\label{DefControlOperator}B(g,h)=\begin{bmatrix}
\mathbbm{1}_{\omega} g\\
0\\
\mathbbm{1}_{J} h \end{bmatrix}.
\end{equation}
Hence,  with \eqref{VarYetY0} system (\ref{eq: parabolic}) can be rewritten in the following form:
\begin{equation}\label{sys-simple}
\frac{d Y}{dt}=\mathcal{A}Y+B(g,h)~~\text{in}~~(0,T),~~Y(0)=Y_0.
\end{equation}

\begin{definition}
We say that system \eqref{sys-simple} is null-controllable in time $T>0$ if for any $Y_0\in\mathscr{H},$ there exists a control $(g,h)\in L^2(0,T; L^2(\omega)\times L^2(J))$ such that the solution of system \eqref{sys-simple} satisfies $Y(T)=0$.
\end{definition}

\begin{proposition} 
    The adjoint of the operator $\mathcal{A}$ is given by $\mathcal{D}(\mathcal{A}^*)=\mathcal{D}(\mathcal{A})$ and 
    \begin{equation}\label{non-b-op-adj}
\mathcal{A}^*\begin{bmatrix}
u\\
\eta_1\\
\eta_2
\end{bmatrix}=\begin{bmatrix}
  \Delta u\\
  -\eta_2\\
  A_1\eta_1-A_2\eta_2-\Lambda^*\partial_n u
\end{bmatrix}.
\end{equation}
\end{proposition}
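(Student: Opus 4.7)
The plan is to prove the claim in two steps: first establish $\mathcal D(\mathcal A) \subset \mathcal D(\mathcal A^*)$ together with the stated formula for $\mathcal A^*$ on this subspace via a direct integration by parts; then establish the reverse inclusion $\mathcal D(\mathcal A^*) \subset \mathcal D(\mathcal A)$ by probing the adjoint identity with carefully chosen elements of $\mathcal D(\mathcal A)$.

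For the forward inclusion, I take arbitrary $Y = (w,\zeta_1,\zeta_2)\in \mathcal D(\mathcal A)$ and a candidate $Z = (u,\eta_1,\eta_2)\in \mathcal D(\mathcal A)$ and compute $(\mathcal A Y, Z)_{\mathscr H}$ component by component. Applying Green's formula twice on $\int_\Omega \Delta w \cdot u$ yields boundary contributions $\int_{\partial\Omega}(\partial_n w\,u - w\,\partial_n u)$ which, using the trace conditions $w = \Lambda\zeta_2$ and $u = \Lambda\eta_2$ together with \eqref{defLambda}--\eqref{defLambdastar}, rewrite as $\int_{\mathcal I} \Lambda^*\partial_n w \cdot \eta_2 - \int_{\mathcal I}\zeta_2\,\Lambda^*\partial_n u$. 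The first boundary contribution exactly cancels the term coming from $-\Lambda^*\partial_n w$ in the third component of $\mathcal A Y$. Using in addition the identity $\int_{\mathcal I} A_1^{1/2}\zeta_2 \cdot A_1^{1/2}\eta_1 = \int_{\mathcal I}\zeta_2 \cdot A_1\eta_1$ (valid since $\eta_1 \in \mathcal D(A_1) = H^4(\mathcal I)$), the analogous identity for $A_1\zeta_1$, and the self-adjointness of $A_2$, the computation rearranges exactly into $(Y, \mathcal A^* Z)_{\mathscr H}$ with $\mathcal A^* Z$ given by the claimed formula.

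For the reverse inclusion, I take $Z \in \mathcal D(\mathcal A^*)$ with $\mathcal A^* Z =: (u^*,\eta_1^*,\eta_2^*)\in \mathscr H$ and test $(\mathcal A Y, Z)_{\mathscr H} = (Y, \mathcal A^* Z)_{\mathscr H}$ against a sequence of well chosen $Y \in \mathcal D(\mathcal A)$. Taking $Y = (w,0,0)$ with $w \in C_c^\infty(\Omega)$ gives $\Delta u = u^*$ in $\mathscr D'(\Omega)$, so $u$ belongs to $\{v \in L^2(\Omega) : \Delta v \in L^2(\Omega)\}$. Taking $Y = (0,\zeta_1,0)$ with $\zeta_1 \in H^4(\mathcal I)$ and exploiting the self-adjointness of $A_1^{1/2}$ (together with a density argument using that $A_1^{1/2} : H^4(\mathcal I) \to H^2(\mathcal I)$ is an isomorphism and $H^2$ is dense in $L^2$) shows that the functional $\psi \mapsto \int A_1^{1/2}\psi \cdot \eta_2$ extends boundedly to $L^2(\mathcal I)$, which forces $\eta_2 \in \mathcal D(A_1^{1/2}) = H^2(\mathcal I)$ and $\eta_1^* = -\eta_2$. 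Taking $Y = (w,0,0)$ with $w \in H^2(\Omega)\cap H^1_0(\Omega)$ and arbitrary Neumann trace $\partial_n w \in H^{1/2}(\partial\Omega)$, and applying the Lions--Magenes generalised Green formula, the identity reduces to $\langle \partial_n w, u - \Lambda\eta_2\rangle_{H^{1/2},H^{-1/2}} = 0$, hence $u = \Lambda\eta_2$ on $\partial\Omega$. Since $\Lambda\eta_2 \in H^{3/2}(\partial\Omega)$, classical elliptic regularity for the Dirichlet problem in the smooth domain $\Omega$ upgrades $u$ to $H^2(\Omega)$. Finally, taking $Y = (\widetilde w, 0, \zeta_2)$ with $\widetilde w \in H^2(\Omega)$ a smooth lifting of $\Lambda\zeta_2$ and using the boundary information just established together with the self-adjointness of $A_2$ and the same density argument for $A_1^{1/2}$, one obtains $\eta_1 \in \mathcal D(A_1) = H^4(\mathcal I)$ and $\eta_2^* = A_1\eta_1 - A_2\eta_2 - \Lambda^*\partial_n u$.

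The main technical obstacle is the third step of the reverse inclusion: the boundary identification of $u$ must be done before we know that $u \in H^2(\Omega)$, working only with $u \in L^2(\Omega)$ and $\Delta u \in L^2(\Omega)$. The correct framework is the Lions--Magenes trace theory, which defines $u|_{\partial\Omega}$ in $H^{-1/2}(\partial\Omega)$ and makes the duality pairing $\langle \partial_n w, u|_{\partial\Omega}\rangle_{H^{1/2},H^{-1/2}}$ meaningful, so that varying $\partial_n w$ legitimately yields $u = \Lambda\eta_2$ as an identity in $H^{-1/2}(\partial\Omega)$; the regularity upgrade to $H^2(\Omega)$ then follows by classical elliptic regularity in the smooth domain $\Omega$. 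The remaining computations are a routine bookkeeping of integration by parts, in the spirit of the adaptation of \cite[Propositions 3.4, 3.5 and 3.11]{MT} already invoked for Proposition \ref{PropAnalytiqueEtStable}.
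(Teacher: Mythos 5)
Your proposal is correct and takes essentially the same route as the paper: the easy inclusion $\mathcal{D}(\mathcal{A})\subset\mathcal{D}(\mathcal{A}^*)$ via Green's formula, and the hard inclusion $\mathcal{D}(\mathcal{A}^*)\subset\mathcal{D}(\mathcal{A})$ by testing the adjoint identity against the same families of elements (compactly supported $w$, pure $\zeta_1$, vanishing trace with arbitrary Neumann data, then a lifting of $\Lambda\zeta_2$), identifying $u=\Lambda\eta_2$ through the generalized trace/duality pairing valid for $u\in L^2(\Omega)$ with $\Delta u\in L^2(\Omega)$, upgrading to $u\in H^2(\Omega)$ by elliptic regularity, and finally reading off $\eta_1\in\mathcal{D}(A_1)$ and the formula for $\mathcal{A}^*$. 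No gaps.
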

\begin{proof}
First, let us prove $\mathcal{D}(\mathcal{A}^*)\subset \mathcal{D}(\mathcal{A})$. For that, suppose that $[u,\eta_1,\eta_2]\in \mathcal{D}(\mathcal{A}^*)$ and set $[F,G,H]=\mathcal{A}^*[u,\eta_1,\eta_2]\in \mathscr{H}$. We have
$$\left(\mathcal{A}[w,\zeta_1,\zeta_2],[u,\eta_1,\eta_2]\right)_{\mathscr{H}}=\left([w,\zeta_1,\zeta_2], [F,G,H]\right)_{\mathscr{H}}\quad \forall ~[w,\zeta_1,\zeta_2]\in \mathcal{D}(\mathcal{A}).$$
The above equality rewrites
\begin{multline}\label{Eq1proofAstar}
\int_\Omega\Delta w~u+\int_{\mathcal{I}}A_1^{1/2} \zeta_2 ~ A_1^{1/2}\eta_1-\int_{\mathcal{I}}(\Lambda^*\partial_n w+A_1\zeta_1+A_2\zeta_2)\eta_2\\
=\int_\Omega w~F+\int_{\mathcal{I}} A_1^{1/2}\zeta_1~ A_1^{1/2} G + \int_{\mathcal{I}}H~\zeta_2.
\end{multline}

First, with $w=0$ and $\zeta_2=0$ we find
$$\int_{\mathcal{I}}A_1 \zeta_1(G+\eta_2)=0 \quad\forall \zeta_1\in H^4(\mathcal{I}).$$ 
from which we deduce that $\eta_2=-G\in \mathcal{D}(A_1^{1/2})$. 

Next, with $\zeta_1=\zeta_2=0$ and $w\in C^\infty_c(\Omega)$, we obtain
$$\int_\Omega u \Delta w=\int_\Omega F w\quad \forall w\in C^\infty_c(\Omega).$$ 
from which we deduce $\Delta u=F\in L^2(\Omega)$. It implies that the trace and the normal derivative of $u$ are well defined in $H^{-1/2}(\partial\Omega)$ and in $H^{-3/2}(\partial\Omega)$ respectively, and,  that the following formula is true for all $w \in H^2(\Omega)$,
\begin{equation}\label{ipp}
\int_\Omega\Delta w\,u-\int_\Omega \Delta u\,w=\langle\partial_n w,u\rangle_{H^{1/2}(\partial\Omega),H^{-1/2}(\partial\Omega)}-\langle w,\partial_n u\rangle_{H^{3/2}(\partial\Omega),H^{-3/2}(\partial\Omega)}.
\end{equation}
Thus, by setting $F=\Delta u$ in \eqref{Eq1proofAstar}, and with $\zeta_1=0$, $\zeta_2\in H^2(\mathcal{I})$ and $w\in H^2(\Omega)$ such that $w=\Lambda \zeta_2$ on $\partial\Omega$, we deduce
\begin{equation}\label{f1}
\int_\Omega\Delta w\,u-\int_\Omega w\,\Delta u+\int_{\mathcal{I}} A_1^{1/2} \zeta_2 \, A_1^{1/2}\eta_1-\int_{\partial\Omega}\partial_n w\,\Lambda \eta_2-\int_{\mathcal{I}}A_2\zeta_2\,\eta_2=\int_{\mathcal{I}}\zeta_2\,H.
\end{equation}
Then with \eqref{ipp}, the equality \eqref{f1} rewrites:
\begin{multline}\label{f2}
\langle\partial_n w,u\rangle_{H^{1/2}(\partial\Omega),H^{-1/2}(\partial\Omega)}-\langle w,\partial_n u\rangle_{H^{3/2}(\partial\Omega),H^{-3/2}(\partial\Omega)}\\+\int_{\mathcal{I}} A_1^{1/2} \zeta_2 \, A_1^{1/2}\eta_1-\int_{\partial\Omega}\partial_n w\,\Lambda \eta_2-\int_{\mathcal{I}}A_2\zeta_2\,\eta_2=\int_{\mathcal{I}}\zeta_2\,H.
\end{multline}
Let $d\in H^{1/2}(\partial\Omega)$. By using the surjectivity of the map $w\in H^2(\Omega)\mapsto (w,\partial_n w)\in H^{3/2}(\partial\Omega)\times H^{1/2}(\partial\Omega)$ we can choose $w\in H^2(\Omega)$ such that $\partial_n w=d$ on $\partial\Omega$. Hence by applying \eqref{f2} for such a $w$ and with $\zeta_2=0$ on $\mathcal{I}$ we finally deduce that
\begin{equation}\label{f3} 
\langle d,u\rangle_{H^{1/2}(\Gamma_1),H^{-1/2}(\Gamma_1)}-\int_{\partial\Omega}d \,\Lambda \eta_2=0 \quad \forall d\in H^{1/2}(\partial\Omega).
\end{equation}
Then we deduce that $u=\Lambda \eta_2$ on $\partial\Omega$. Moreover, since $\eta_2\in H^2(\mathcal{I})$ it is clear that $u=\Lambda \eta_2\in H^{2}(\partial\Omega)$. Since we also have $\Delta u\in L^2(\Omega)$, elliptic regularity results yield $u\in H^2(\Omega)$. In particular, we have $\partial_n u\in H^{1/2}(\partial\Omega)$ and $\Lambda^*\partial_n u\in H^{1/2}(\mathcal{I})$.

Finally, by coming back to \eqref{f2}, that holds for all $w\in H^2(\Omega)$ and $\zeta_2\in H^2(\mathcal{I})$ such that $w=\Lambda \zeta_2$ on $\partial\Omega$, and by taking into account that $u=\Lambda\eta_2$ on $\partial\Omega$, we find that 
$$
-\int_{\partial\Omega} \Lambda \zeta_2\,\partial_n u+\int_{\mathcal{I}} A_1^{1/2} \zeta_2 \, A_1^{1/2}\eta_1-\int_{\mathcal{I}}A_2\zeta_2\,\eta_2=\int_{\mathcal{I}}\zeta_2\,H
.$$
Then $A_1 \eta_1=H+A_2\eta_2 +\Lambda^*\partial_n u\in L^2(\mathcal{I})$ from which we deduce $\eta_1\in \mathcal{D}(A_1)$. 

If we summarize the above results, we have proved $(u,\eta_1,\eta_2)\in H^2(\Omega)\times \mathcal{D}(A_1)\times \mathcal{D}(A_1^{1/2})$ and $u=\Lambda\eta_2$ on $\partial\Omega$, and that $F=\Delta u$, $G=-\eta_2$ and $H=A_1 \eta_1-A_2\eta_2 -\Lambda^*\partial_n u$. Then with  \eqref{non-b-opDomain} it means that $(u,\eta_1,\eta_2)\in \mathcal{D}(\mathcal{A})$, and we have proved $\mathcal{D}(\mathcal{A}^*)\subset \mathcal{D}(\mathcal{A})$. Moreover, since $(F,G,H)=\mathcal{A^*}(u,\eta_1,\eta_2)$ we have that \eqref{non-b-op-adj} is true. 

To conclude,  the converse inclusion $\mathcal{D}(\mathcal{A})\subset \mathcal{D}(\mathcal{A}^*)$ follows from an application of the Green formula. 
\end{proof}

Next, the adjoint $B^*:\mathscr{H}\to L^2(\omega)\times L^2(J)$, of the operator $B$ defined in \eqref{DefControlOperator}, is given by
\begin{equation}\label{DefControlOperatorAdj}B^*\begin{bmatrix}
u\\
\eta_1\\
\eta_2 \end{bmatrix}
=\begin{bmatrix}
u|_\omega\\
\eta_2|_J \end{bmatrix}.
\end{equation}
Let us consider the following adjoint system:
\begin{equation}\label{sys-simple_eta}
\frac{d V}{dt}=\mathcal{A}^*V~~\text{in}~~(0,T).
\end{equation}

It is well-known (see e.g. \cite[Theorem 11.2.1, p.357]{Tuc}) that the null-controllability is equivalent to the final-state observability of the adjoint system, that is: system \eqref{sys-simple} is null-controllable in time $T>0$ if, and only if, there exist $C_T>0$ such that any solution of \eqref{sys-simple_eta} satisfies
\begin{equation}\label{obs-form-simple0}
    \|V(T)\|^2_{\mathscr{H}}\leq C_T\int_0^T\|B^* V\|_{L^2(\omega)\times L^2(J)}^2.
\end{equation}
In view of \eqref{non-b-op-adj} the above equality is equivalent to
\begin{align*}
\begin{cases}
\partial_t u =\Delta u &\text{in}~Q,\\
u=\Lambda \eta_2  &\text{on}~(0,T) \times\partial\Omega,\\
\partial_{t}\eta_1=-\eta_2 & \text{in}~(0,T) \times\mathcal{I},\\
\partial_{t}\eta_2=A_1\eta_1-A_2\eta_2-\Lambda^*\partial_n u & \text{in}~(0,T) \times\mathcal{I}.
\end{cases}
\end{align*}
Hence, we deduce that $(u, \eta)=(u,-\eta_1)$ satisfies
\begin{equation}\label{sys-simple_eta-ode}
\frac{d}{dt}\begin{bmatrix}
    u\\
    \eta\\
    \partial_t\eta
\end{bmatrix}=\mathcal{A}\begin{bmatrix}
    u\\
    \eta\\
    \partial_t\eta
\end{bmatrix}~~~\text{in}~~(0,T)
\end{equation}
 or equivalently,
\begin{align}\label{adjoint-final}
\begin{cases}
\partial_t u =\Delta u &\text{in}~Q,\\
u=\Lambda \partial_t\eta  &\text{on}~(0,T) \times\partial\Omega,\\
\partial_{t}^2\eta+A_1\eta+A_2\partial_t \eta=-\Lambda^*\partial_n u & \text{in}~(0,T) \times\mathcal{I}.\\
\end{cases}
\end{align}
Then the null-controllabitity of system \eqref{sys-simple} in time $T>0$ is equivalent to the existence of $C_T>0$ such that any solution of \eqref{adjoint-final} satisfies

\begin{equation}\label{obs-form-simple}
    \|[u,\eta,\partial_t\eta](T)\|^2_{\mathscr{H}}\leq C_T\left(\int_0^T\int_\omega |u|^2+\int_0^T\int_J|\partial_t\eta |^2\right).
\end{equation}


\section{Carleman estimate for the coupled heat and damped beam system}\label{SectCarlemanEst}
In this section we state the Carleman estimate for the heat equation that we combine with Theorem \ref{poutre} to deduce Theorem \ref{Carleman-couplé}, and we prove the observability inequality for the adjoint system \eqref{eq: parabolic adjoint}.

\paragraph{Notation.}
In what follows, we use the notation $X\lesssim Y$ and $X\gtrsim Y$ if there exists a constant $C>0$ which is independent of
the parameters $(s,\lambda,\mu,T,\alpha)$ such that we have the inequalities $X\leq C Y$ and $X\geq C Y$ respectively.

\subsection{About some properties of the weight functions}\label{weight-function-for-Carleman}

We now state some properties that are satisfied by the functions  $\varphi$, $\xi$, $\varphi_0$ and $\xi_0$
defined by equations \eqref{weights2} and \eqref{weights1} in the introduction, and that we need in the following calculations.

Let $b>a$, $i=1,2$ and $j\in \mathbb{N}$. The following properties are true.

\begin{align}\label{prop-phi}
|\partial_{x_i}^j\varphi|+|\partial_{x_i}^j\xi|&\lesssim \lambda^j\xi,\\\notag
|\partial_t\partial_{x_i}^j\varphi|+|\partial_t\partial_{x_i}^j\xi|&\lesssim \lambda^j T\xi^{1+\frac{2}{k}},\\ \notag
|\partial^2_t\partial_{x_i}^j \varphi|+|\partial^2_t\partial_{x_i}^j \xi|&\lesssim \lambda^j T^2\xi^{1+\frac4k},\\ \notag
\xi^a&\leq T^{(b-a)k}~\xi^b,
\end{align}
and
\begin{align}
|\partial^j_{x_1}\varphi_0|+|\partial^j_{x_1}\xi_0|\lesssim \mu^j\xi_0,\label{poutre1} 
\\
\label{n2}
|\partial_t\partial^j_{x_1}\xi_0|+|\partial_t\partial^j_{x_1}\varphi_0|&\lesssim\mu^j~T~\xi_0^{1+\frac{2}{k}},\\ \label{n3}
|\partial^2_t\partial^j_{x_1}\xi_0|+|\partial^2_t\partial^j_{x_1}\varphi_0|&\lesssim\mu^j~T^2~\xi_0^{1+\frac{4}{k}},\\
\xi_0^a \leq T^{(b-a)k}~\xi^b_0.
\end{align}

For the following we suppose that for some $s_0\geq 1$ we have $s\geq s_0 (T^k+T^{k-1})$. Then the above estimates yield
\begin{align}
\label{n2-1}
|\partial_t\partial^j_{x_i}\xi|+|\partial_t\partial^j_{x_i}\varphi|&\lesssim\lambda^j\left(s/s_0\right)~\xi^2,\\ \label{n3-1}
|\partial^2_t\partial^j_{x_i}\xi|+|\partial^2_t\partial^j_{x_i}\varphi|&\lesssim\mu^j~\left(s/s_0\right)^2~\xi^3,\\
\xi^a \lesssim \left(s/s_0\right)^{b-a}~\xi^b.\label{n1-1}
\end{align}
and
\begin{align}
\label{n2-2}
|\partial_t\partial^j_{x_1}\xi_0|+|\partial_t\partial^j_{x_1}\varphi_0|&\lesssim\mu^j\left(s/s_0\right)~\xi_0^2,\\ \label{n3-2}
|\partial^2_t\partial^j_{x_1}\xi_0|+|\partial^2_t\partial^j_{x_1}\varphi_0|&\lesssim\mu^j~\left(s/s_0\right)^2~\xi_0^3,\\
\xi_0^a\lesssim \left(s/s_0\right)^{b-a}~\xi^b_0.\label{n1-2}
\end{align}
Moreover there exists $\mu_0\geq 1$ such that for $\lambda\geq\mu\geq\mu_0$, for $t\in [0,T]$ and for $x_1\in\mathcal{I}\smallsetminus J_0$ (where $J_0\Subset J$),
\begin{equation}\label{poutre2-0}
\mu\xi_0\lesssim |\partial_{x_1}\varphi_0|,~~ \mu^2\xi_0\lesssim \partial^2_{x_1}\varphi_0.
\end{equation}

\subsection{Carleman estimate for the heat equation}\label{preuve-carleman} 
 We introduce the following boundary integral, where we have denoted $\Sigma = (0,T) \times \partial \Omega$:
\begin{align}\label{termes-au-bord}\notag
I_{\Sigma}(s,\lambda,\mu,v)&=-\iint_\Sigma\frac{\partial v}{\partial n}~\partial_tv+s^3\lambda^3\iint_\Sigma\xi_0^3~|v|^2-s\lambda^2\iint_\Sigma\xi_0~\partial_n v~v\\ 
&+\lambda s^2\iint_\Sigma\xi_0~\partial_t\varphi~|v|^2+\lambda s\iint_\Sigma\xi_0~|\partial_n v|^2-\lambda s\iint_\Sigma\xi_0~|\partial_{x_1} v|^2\\ \notag
&+\mu s\iint_\Sigma\xi_0~\partial_n v~\psi'_{\mathcal{I}}~\partial_{x_1}v.
\end{align}
\begin{theorem}\label{chaleur}
    Let $\omega$ be a non-empty open subset of $\Omega$ and let $k\geq 2$. There exist $c_0>0$
    and $s_0\geq 1$ such that for all $\lambda$, $\mu$, $\lambda\geq \mu\geq 1$,
    for all $T>0$ and for all $s\geq s_0(T^k+T^{k-1}),$ the following inequality holds for all $u\in C^1([0,T];C^2(\overline{\Omega})$ such that with $u=0$ on $(0,T)\times\Gamma_0:$
\begin{align} 
& s^{-1}\iint_{(0,T)\times\Omega}~\xi^{-1}~e^{2s\varphi}\left(|\partial_tu|^2+|\Delta u|^2\right)+s\lambda^2\iint_{(0,T)\times\Omega}~\xi~e^{2s\varphi}|\nabla u|^2\notag\\ 
   &+s^3\lambda^4\iint_{(0,T)\times\Omega}~\xi^3~e^{2s\varphi}|u|^2+I_{\Sigma}(s,\lambda,\mu,v=e^{s\varphi}u)\label{CarlemanHeat}\\ &\leq c_0 
   \left(\iint_{(0,T)\times\Omega}~e^{2s\varphi}|\partial_tu-\Delta u|^2+s^3\lambda^4\iint_{(0,T)\times\omega} \xi^3~~e^{2s\varphi}| u|^2\right),\notag
\end{align}
where $I_{\Sigma}(s,\lambda,\mu,v=e^{s\varphi}u)$ is given by \eqref{termes-au-bord}.
\end{theorem}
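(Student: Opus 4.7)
The plan is to follow the Fursikov--Imanuvilov machinery, adapted to the fact that we do not impose a homogeneous Dirichlet condition on the whole boundary, but only on $\Gamma_0$. As a consequence the boundary contributions arising on $\Sigma$ must be collected rather than dropped: they will precisely assemble into the quantity $I_\Sigma(s,\lambda,\mu,v)$. Concretely, I would first set $v:=e^{s\varphi}u$, so that $v$ vanishes on $(0,T)\times\Gamma_0$ and $u\in C^1_tC^2_x$ implies $v$ has enough regularity to integrate by parts freely. A direct computation then gives
\begin{equation*}
e^{s\varphi}(\partial_t-\Delta)u \;=\; P_1 v + P_2 v + R v,
\end{equation*}
where $P_1 v := -\Delta v - s^2|\nabla\varphi|^2 v + s\partial_t\varphi\, v$ is formally self-adjoint, $P_2 v := \partial_t v + 2s\nabla\varphi\cdot\nabla v + 2s\Delta\varphi\, v$ is formally skew-adjoint, and $Rv$ collects the lower-order remainder terms (involving one fewer power of $s$ or of $\lambda$ than the dominant contributions). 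Taking the $L^2(Q)$ norm squared yields the fundamental identity
\begin{equation*}
\|P_1v\|_{L^2(Q)}^2 + \|P_2v\|_{L^2(Q)}^2 + 2(P_1v,P_2v)_{L^2(Q)} \;=\; \|e^{s\varphi}(\partial_t-\Delta)u - Rv\|_{L^2(Q)}^2,
\end{equation*}
and the strategy reduces to extracting from the cross term $(P_1v,P_2v)_{L^2(Q)}$ the dominant positive weighted $L^2$-quantities on the left-hand side of \eqref{CarlemanHeat}, modulo the boundary contributions on $\Sigma$ and absorbable remainders.

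The second step is a careful expansion of $(P_1v,P_2v)_{L^2(Q)}$ by integration by parts in $t$ and in $x$. The largest contribution, coming from pairing $-s^2|\nabla\varphi|^2 v$ with $2s\nabla\varphi\cdot\nabla v$, produces after one $x$-integration by parts a term proportional to $s^3\lambda^4\iint\xi^3|\nabla\psi_\Omega|^4|v|^2$; the pairing of $-\Delta v$ with $2s\nabla\varphi\cdot\nabla v$ produces, after integrating by parts twice, the $s\lambda^2\iint\xi|\nabla\psi_\Omega|^2|\nabla v|^2$ contribution. Since $\lambda\geq\mu$ and $\varphi$ depends on $\psi_{\mathcal I}$ through $\mu$ only, the $\lambda$-terms dominate and the announced powers $s^3\lambda^4$ and $s\lambda^2$ come out cleanly. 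All remaining volume terms carry at least one power of $s$ or $\lambda$ less, or involve $\partial_t\varphi,\partial_t^2\varphi$; using the weight estimates \eqref{prop-phi}, \eqref{n2-1}--\eqref{n1-1}, they are absorbed into the dominant two contributions as soon as $s\geq s_0(T^k+T^{k-1})$ with $s_0$ large (the precise power $T^{k-1}$ is forced by the worst time-derivative term of $\varphi$, which carries a factor $T\xi^{1+2/k}$). Every integration by parts in $x$ generates a boundary integral on $\Sigma$; the boundary terms on $\Gamma_0$ either vanish thanks to $v=0$ and $\partial_{x_1}v=0$ there, or survive only through $\partial_n v$; the tangential $x_1$-integrations are periodic so produce no contribution. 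After regrouping, exactly the boundary terms listed in \eqref{termes-au-bord} survive on $\Gamma_1\cup\Gamma_0$, which is how the function $I_\Sigma$ is defined.

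The third step is the passage from ``positivity outside the critical set'' to the full weighted estimate with an observation on $\omega$. By \eqref{w1}, the gradient $\nabla\psi_\Omega$ vanishes only inside $\omega_0\Subset\omega$, hence is bounded below by a positive constant on $\Omega\setminus\omega_0$, which turns the weighted $|\nabla\psi_\Omega|^4|v|^2$ and $|\nabla\psi_\Omega|^2|\nabla v|^2$ contributions into the genuine $s^3\lambda^4\xi^3|v|^2$ and $s\lambda^2\xi|\nabla v|^2$ terms on $\Omega\setminus\omega_0$. On $\omega_0$, I introduce a cutoff $\chi\in C^\infty_c(\omega)$ equal to $1$ on $\omega_0$, multiply the equation satisfied by $u$ by $\chi^2 s^3\lambda^4\xi^3 e^{2s\varphi}u$ and integrate; after integration by parts and using Young's inequality, the $\int_{\omega_0}$ contribution of $|\nabla v|^2$ is absorbed at the cost of the single observation term $s^3\lambda^4\iint_{(0,T)\times\omega}\xi^3 e^{2s\varphi}|u|^2$ on the right-hand side, as is standard. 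I then convert back from $v$ to $u$ using $v=e^{s\varphi}u$, which is lossless up to boundary/volume factors since $\partial^{\alpha}v = e^{s\varphi}(\partial^{\alpha}u + \text{l.o.t. in }s\lambda)$; the lower-order terms are again absorbed by the dominant contributions.

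The final step is to recover the weighted estimates on $|\partial_t u|^2$ and $|\Delta u|^2$ with the factor $s^{-1}\xi^{-1}$. This is done by using the equation itself: multiplying $\partial_t u - \Delta u = f$ by $s^{-1}\xi^{-1}e^{2s\varphi}\partial_t u$ and by $s^{-1}\xi^{-1}e^{2s\varphi}\Delta u$, using $|\partial_t(\xi^{-1}e^{2s\varphi})|\lesssim s\xi e^{2s\varphi}$ from \eqref{prop-phi}--\eqref{n2-1}, one obtains the desired $s^{-1}$-weighted bounds for $|\partial_t u|^2$ and $|\Delta u|^2$ in terms of $\iint e^{2s\varphi}|f|^2$ and of the already-controlled $s\lambda^2\iint\xi e^{2s\varphi}|\nabla u|^2$; the boundary terms produced here are of the same type as those already present in $I_\Sigma$. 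I expect the main obstacle to be neither the dominant cross-term computation nor the observation localisation (both are textbook once the weights are set up), but rather the bookkeeping of all $\Sigma$-contributions to obtain \emph{exactly} the expression \eqref{termes-au-bord}: every integration by parts on $\Sigma$ must be redone keeping the $x_1$-tangential part $\psi'_{\mathcal I}\partial_{x_1}v$, since this is the term that will eventually have to be matched against the beam contribution in Section \ref{Sect33}.
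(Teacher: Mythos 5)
Your proposal follows essentially the same route the paper intends: the paper does not write out a proof of Theorem \ref{chaleur} but appeals to the classical Fursikov--Imanuvilov conjugation $v=e^{s\varphi}u$, the symmetric/skew splitting and the cross-term computation of \cite{Cara,Fursikov}, with the sole particularity of keeping all boundary contributions from the integrations by parts as in \cite{Badra} so that they assemble into $I_\Sigma$, which is precisely your plan, including the localisation on $\omega_0$ and the absorption of time-derivative remainders under $s\geq s_0(T^k+T^{k-1})$. Two cosmetic caveats: with $s\Delta\varphi\,v$ rather than $2s\Delta\varphi\,v$ the operator $P_2$ is genuinely skew-adjoint (the mismatch otherwise just moves into $R$), and the $s^{-1}\xi^{-1}$-weighted bounds on $\partial_t u$ and $\Delta u$ are more safely read off algebraically from $\|P_1v\|^2+\|P_2v\|^2$ (using $s^{-1}\xi^{-1}\lesssim 1$) than by re-multiplying the equation, which would generate fresh boundary terms that are not part of the fixed expression \eqref{termes-au-bord}.
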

Such Carleman estimate for the heat equation can easily be obtained by following the classical techniques developed in \cite{Cara,Fursikov}. The only particularity here is that we keep all the boundary terms coming from the integration by parts as in \cite{Badra}, which gives the term $I_\Sigma$, and will be used in the combination with the Carleman estimate for the damped beam equation.

\subsection{Carleman estimate for the coupled system -- Proof of Theorem \ref{Carleman-couplé}}\label{Sect33}
\begin{proof}[Proof of Theorem \ref{Carleman-couplé}]
We denote by $I^\ell_\Sigma$, $\ell=1,\dots, 7$, the $\ell$th term on the right hand side of \eqref{termes-au-bord}. 
Let $\varepsilon>0$. First, using Young's inequality, we deduce:
\begin{align*}    |I^1_\Sigma|&=\left|\iint_\Sigma\partial_nv~\partial_tv\right|\leq \frac{1}{4\varepsilon s\lambda}\iint_\Sigma\frac1\xi_0~|\partial_tv|^2+\varepsilon s\lambda\iint_\Sigma\xi_0~|\partial_nv|^2.
\end{align*}
Next, using Young's inequality and  \eqref{n1-2}, we obtain
$$
|I^3_\Sigma|=\left|s\lambda^2\iint_\Sigma\xi_0~\partial_n v~v\right|\leq \frac{s\lambda}{2}\iint_\Sigma\xi_0~|\partial_nv|^2+C \frac{1}{s_0^2}s^3\lambda^3\iint_\Sigma\xi_0^3~|v|^2.
$$
Using \eqref{n2-2}, we deduce that,
$$
    |I^4_\Sigma|=\left| s^2\lambda\iint_\Sigma\xi_0~\partial_t\varphi_0~|v|^2\right|\lesssim  \frac{1}{s_0}s^3\lambda \iint_\Sigma\xi_0^3~|v|^2.
$$
Next, from $\lambda\geq \mu$, Young's inequality yields
\begin{multline*}
|I^7_\Sigma+I^6_\Sigma|=\left|-\lambda s\iint_\Sigma\xi_0~|\partial_{x_1} v|^2+\mu s\iint_\Sigma\xi_0~\partial_nv~\psi'_{\mathcal{I}}~\partial_{x_1}v\right|\\
\leq 4\varepsilon s\lambda \iint_\Sigma\xi_0~|\partial_nv|^2+(1+\|\psi_{\mathcal{I}}'\|_{L^\infty(\mathcal{I})}/\varepsilon) s\lambda \iint_\Sigma\xi_0~|\partial_{x_1}v|^2.
\end{multline*}
Then by combining the above inequalities, by fixing $\varepsilon>0$ small enough and for $s_0\geq 1$ large enough, from \eqref{CarlemanHeat} we deduce that 
\begin{align}\label{est7-03} 
& s^{-1}\iint_{(0,T)\times\Omega}~\xi^{-1}~e^{2s\varphi}\left(|\partial_tu|^2+|\Delta u|^2\right)+s\lambda^2\iint_{(0,T)\times\Omega}~\xi~e^{2s\varphi}|\nabla u|^2\notag\\ 
   &+s^3\lambda^4\iint_{(0,T)\times\Omega}~\xi^3~e^{2s\varphi}|u|^2+s^3\lambda^3\iint_\Sigma\xi_0^3~|v|^2+\lambda s\iint_\Sigma\xi_0~|\partial_n v|^2\\ &\leq c_0 
   \Big(\iint_{(0,T)\times\Omega}~e^{2s\varphi}|\partial_tu-\Delta u|^2+s^3\lambda^4\iint_{(0,T)\times\omega} \xi^3~~e^{2s\varphi}| u|^2\notag\\
   & + \frac{1}{s\lambda}\iint_\Sigma\frac1\xi_0~|\partial_tv|^2+\lambda s\iint_\Sigma\xi_0~|\partial_{x_1} v|^2\Big).\notag
\end{align}
From $v=e^{s\varphi}u$ we deduce $\partial_n v= s\partial_n \varphi v+e^{s\varphi} \partial_n u$. Then with \eqref{n1-2} we get
$$
\lambda s\iint_\Sigma\xi_0~e^{2s\varphi_0}|\partial_n u|^2\lesssim \lambda s\iint_\Sigma\xi~|\partial_n v|^2+\frac{1}{s_0}s^3\lambda^3 \iint_\Sigma\xi_0^3~|v|^2.
$$
and \eqref{est7-03} yields
\begin{multline}\label{EstH-1}
H(s, \lambda, u) \leq c_0 
   \Big(\iint_{(0,T)\times\Omega}~e^{2s\varphi}|\partial_tu-\Delta u|^2+s^3\lambda^4\iint_{(0,T)\times\omega} \xi^3~e^{2s\varphi}| u|^2 \\
   + \frac{1}{s\lambda}\iint_\Sigma\frac1\xi_0~|\partial_tv|^2+\lambda s\iint_\Sigma\xi_0~|\partial_{x_1} v|^2\Big).
\end{multline}
Next, from $v=e^{s\varphi}u$ and $u=\Lambda (\partial_t\eta)$ on $\Sigma$, where $\Lambda$ is defined in \eqref{defLambda}, we deduce 
\begin{align*}
\partial_tv=\partial_t (e^{s\varphi_0}u)&= s(\partial_t \varphi_0) e^{s\varphi_0}u+e^{s\varphi_0} \partial_t u=\Lambda(s(\partial_t \varphi_0) e^{s\varphi_0}\partial_t \eta+e^{s\varphi_0} \partial_t^2 \eta)\quad \mbox{ on $\Sigma$,}\\
\partial_{x_1} v&= s \mu (\psi_{\mathcal{I}}')\xi_0 e^{s\varphi_0}u+e^{s\varphi_0}\partial_{x_1}u= \Lambda(s \mu (\psi_{\mathcal{I}}')\xi_0 e^{s\varphi_0}\partial_t \eta+e^{s\varphi_0}\partial_t\partial_{x_1} \eta)\quad \mbox{on $\Sigma$,}
\end{align*}
and then with \eqref{n1-2} and $\lambda\geq \mu$,
\begin{multline}\label{Est8-03}
\frac{1}{s\lambda}\iint_\Sigma\frac1\xi_0~|\partial_tv|^2+\lambda s\iint_\Sigma\xi_0~|\partial_{x_1} v|^2\lesssim  \frac{1}{s\lambda}\iint_{(0,T)\times \mathcal{I}}  \frac{1}{\xi_0}~e^{2s\varphi_0}|\partial_t^2 \eta|^2\\
+s^3\lambda \mu^2 \iint_{(0,T)\times \mathcal{I}}  \xi_0^3~e^{2s\varphi_0}|\partial_t \eta|^2+s\lambda \iint_{(0,T)\times \mathcal{I}}  \xi_0~e^{2s\varphi_0}|\partial_t\partial_{x_1} \eta|^2\lesssim \left(\frac{1+\alpha^2}{\lambda \alpha^2}+\frac{\lambda}{(1+\alpha^2)\mu^2}\right) B_\alpha(s, \mu, \eta).
\end{multline}
Moreover, from  $u=\Lambda (\partial_t\eta)$ on $\Sigma$ we deduce 
\begin{equation}\label{Eq21-04}
\alpha^2s^3\mu^4\iint_{(0,T)\times\mathcal{I}}\xi_0^3~ e^{2s\varphi_0}|\partial_t\eta|^2=\alpha^2 s^3\mu^4\iint_{\Sigma}\xi_0^3~ e^{2s\varphi_0}|u|^2\leq \frac{\alpha^2 \mu^4}{\lambda^3}H(s, \lambda, u).
\end{equation}
Then by combining \eqref{EstH-1}, \eqref{Est8-03}, \eqref{poutre-theorem}  and \eqref{Eq21-04}  we obtain
\begin{multline}\label{EstHetB-1}
H(s, \lambda, u) +B_\alpha (s, \mu, \eta) \lesssim
   \iint_{(0,T)\times\Omega}~e^{2s\varphi}|\partial_tu+\Delta u|^2+\iint_{(0,T)\times\mathcal{I}}e^{2s\varphi_0}|\partial^2_t\eta-\alpha\partial^2_{x_1}\partial_t\eta+\partial^4_{x_1}\eta|^2 \\
   +s^3\lambda^4\iint_{(0,T)\times\omega} \xi^3~e^{2s\varphi}| u|^2+ \left(\alpha^{-8}+\alpha^4\right)\iint_{(0,T)\times J_0}s^7\mu^8\xi^7_0~e^{2s\varphi_0}|\eta|^2+\\ 
   +\left(\frac{1+\alpha^2}{\lambda \alpha^2}+\frac{\lambda}{(1+\alpha^2)\mu^2}\right) B_\alpha(s, \mu, \eta)+\frac{\alpha^2\mu^4}{\lambda^3}H(s, \lambda, u).
\end{multline}
Moreover, note that from \eqref{n1-2} we deduce
\begin{multline*}
\iint_{(0,T)\times\mathcal{I}}e^{2s\varphi_0}|\partial^2_t\eta-\alpha\partial^2_{x_1}\partial_t\eta+\partial^4_{x_1}\eta|^2\lesssim \iint_{(0,T)\times\mathcal{I}}e^{2s\varphi_0}|\partial^2_t\eta-\alpha\partial^2_{x_1}\partial_t\eta+\alpha\partial_t\eta+\partial^4_{x_1}\eta+\eta+\partial_nu|^2\\+\alpha^2(s/s_0)^3\iint_{(0,T)\times\mathcal{I}}e^{2s\varphi_0}\xi_0^3 |\partial_t\eta|^2+(s/s_0)^7\iint_{(0,T)\times\mathcal{I}}e^{2s\varphi_0}\xi_0^7 |\eta|^2.
\end{multline*}
Then by choosing $s_0$ large enough we can replace  to replace $\partial^2_t\eta-\alpha\partial^2_{x_1}\partial_t\eta+\partial^4_{x_1}\eta$ by $\partial^2_t\eta-\alpha\partial^2_{x_1}\partial_t\eta+\alpha\partial_t\eta+\partial^4_{x_1}\eta+\eta+\partial_nu$ in \eqref{EstHetB-1}. 

Finally, for the choice of parameters $\lambda$, $\mu$ given in the statement of the theorem the last two terms at the right side of the inequality \eqref{EstHetB-1} can be compensated by its left side.
\end{proof}

\section{Observability inequality for the coupled system}\label{section_observability}

In this section, we fix the parameters $\lambda$, $s$ as in Theorem \ref{Carleman-couplé}.
We now prove Proposition \ref{obs-dteta}. First, we recall the definition of the weight functions appearing in inequality \eqref{dteta}:
\begin{equation}\label{poids en t1}
\varphi_1(t)=\frac{e^{8\lambda\Psi}-e^{10\lambda\Psi}}{\ell^{k/2}(t)},~~\xi_1(t)=\frac{e^{8\lambda\Psi}}{\ell^{k/2}(t)},
\end{equation}
\begin{equation}\label{poids en t2}
\varphi_2(t)=\frac{e^{9\lambda\Psi}-e^{10\lambda\Psi}}{\ell^{k/2}(t)},~~\xi_2(t)=\frac{e^{9\lambda\Psi}}{\ell^{k/2}(t)}.
\end{equation}
Observe that, by definition of $\Psi$ (see equation \eqref{eq_def_PSI}), we have 
\begin{equation}\label{OrdreVarphi12}\varphi_1(t)\leq\varphi(t,\cdot)\leq\varphi_2(t),~~\xi_1(t)\leq \xi(t,\cdot)\leq\xi_2(t)~~\text{for}~~t\in(0,T),
\end{equation}
and that, analogously to \eqref{n2-1}-\eqref{n1-2}, we have for $a,b\in\R,$ $k\in\N$ and $j=1,2$:
\begin{align}\label{E1}
 &\xi_j^a\lesssim \left(s/s_0\right)^{b-a}\xi^b_j,\notag\\ 
 &|\partial_t\xi_j|+|\partial_t\varphi_j|\lesssim\left(s/s_0\right) \xi_j^{2}, \\ 
 &|\partial^2_t\xi_j|+|\partial^2_t\varphi_j|\lesssim\left(s/s_0\right)^2 \xi_j^3.\notag
\end{align}

\begin{proof}[Proof of Proposition \ref{obs-dteta}]
First, for the following we set
\begin{equation}\label{DefVarrho01}
\varrho_0(t)=s^{3/2}\lambda~\xi^{3/2}_1~e^{s\varphi_1},\quad \varrho_1(t)=s^{7/2}\lambda^4\xi_2^{7/2}~e^{s\varphi_2},\quad \varrho_2(t)=s^{11/2}\lambda^{3}\xi_2^{11/2}~e^{2s\varphi_2-s\varphi_1},
\end{equation}
$$
\varrho_3(t)=s^{-1/2}\lambda~\xi_1^{-1/2}~e^{s\varphi_1},\quad \varrho_4(t)=s^{-5/2}\lambda~\xi_1^{-5/2}e^{s\varphi_1}\quad\mbox{and}\quad \varrho_5(t)=s^{3/2}\lambda^2\xi_2^{3/2}~e^{s\varphi_2}.
$$

Observe in particular that $\varrho_i\in\mathscr{C}^1([0,T])$, $\varrho_i(0)=\varrho_i(T)=0$, $i=0,\dots, 5$, and thanks to \eqref{E1}:
\begin{equation}\label{bound-rho}
|\varrho'_3|\lesssim\varrho_0~~\text{and}~~|\varrho'_4|\lesssim\varrho_3.
\end{equation}

On the other hand, thanks to Theorem \ref{Carleman-couplé}, and with $\mu \leq \lambda \leq (1+\alpha^2)\mu^2/c_1$, \eqref{OrdreVarphi12} and \eqref{E1} we get 
\begin{multline}\label{Carl-gele1}
\frac{1}{(1+\alpha^2)^2}\int_{0}^Ts^3\lambda^2\xi_1^3~~e^{2s\varphi_1}\big(\|u\|_{L^2(\Omega)}^2+\|\eta\|^2_{H^2(\mathcal{I})}+\|\partial_t\eta\|^2_{L^2(\mathcal{I})}\big)\lesssim 
H(s,\lambda, u)+B_\alpha(s,\mu,\eta) \\
\lesssim \iint_{(0,T)\times\omega}s^3\lambda^4~ \xi^3_2~~e^{2s\varphi_2}| u|^2+ \left(\alpha^{-8}+\alpha^4\right)\iint_{(0,T)\times J}s^7\lambda^8~\xi^7_2~e^{2s\varphi_2}|\eta|^2.
\end{multline}

From \eqref{Carl-gele1} we have:
\begin{multline}\label{Carl-gele2}
\int_{0}^T\varrho^2_0(t)~\big(\|u\|_{L^2(\Omega)}^2+\|\eta\|^2_{H^2(\mathcal{I})}+\|\partial_t\eta\|^2_{L^2(\mathcal{I})}\big)\\
\lesssim (1+\alpha^4)\iint_{(0,T)\times\omega}\varrho^2_5(t)| u|^2+ \left(\alpha^{-8}+\alpha^8\right)\iint_{(0,T)\times J}\varrho^2_1(t)|\eta|^2.
\end{multline}
We observe that 
$$
\frac{d}{dt}\begin{bmatrix}
    \partial_tu\\
    \partial_t\eta\\
    \partial_t^2\eta
\end{bmatrix}=\mathcal{A}\begin{bmatrix}
    \partial_tu\\
    \partial_t\eta\\
    \partial_t^2\eta
\end{bmatrix}~~~\text{in}~~(0,T).
$$
Then we can apply \eqref{Carl-gele2} to $[\partial_tu,\partial_t\eta,\partial^2_t\eta]$ and we get
\begin{multline}\label{Carl-gele3}
\int_{0}^T\varrho^2_0(t)~\big(\|\partial_tu\|_{L^2(\Omega)}^2+\|\partial_t\eta\|^2_{H^2(\mathcal{I})}+\|\partial^2_t\eta\|^2_{L^2(\mathcal{I})}\big)\\
\lesssim (1+\alpha^4)\iint_{(0,T)\times\omega}\varrho^2_5(t)|\partial_tu|^2+ \left(\alpha^{-8}+\alpha^8\right)\iint_{(0,T)\times J}\varrho^2_1(t)|\partial_t\eta|^2.
\end{multline}
 We integrate by parts the first term on the right hand side of \eqref{Carl-gele3}. We then have 
 \begin{align*}
    \iint_{(0,T)\times\omega}\varrho^2_5(t)|\partial_tu|^2&=-\iint_{(0,T)\times\omega}\partial_t(\varrho^2_5(t)\partial_tu)u
    \\&=-\iint_{(0,T)\times\omega}\partial_t(\varrho^2_5(t))~\partial_tu~u-\iint_{(0,T)\times\omega}\varrho^2_5(t)~\partial^2_tu~u\\
    &=\frac12\iint_{(0,T)\times\omega}\partial^2_t\big(s^3\lambda^4~ \xi^3_2~~e^{2s\varphi_2}\big)~|u|^2-\iint_{(0,T)\times\omega}\varrho^2_5(t)~\partial^2_tu~u.
 \end{align*}
From \eqref{E1} we deduce
 \begin{align*}
  \big|\partial^2_t\big(s^3\lambda^4~ \xi^3_2~e^{2s\varphi_2}\big)\big|&\lesssim  s^{11}\lambda^4~\xi^{11}_2~e^{2s\varphi_2}
 \end{align*}
 and since $\lambda \geq 1$ and $\varphi_2-\varphi_1\geq 0$ we get
 $$
  \iint_{(0,T)\times\omega}\varrho^2_5(t)|\partial_tu|^2\lesssim \iint_{(0,T)\times\omega}\varrho^2_2(t) |u|^2-\iint_{(0,T)\times\omega}\varrho^2_5(t)~\partial^2_tu~u.
$$
 On the other hand, 
 \begin{align*}
-\iint_{(0,T)\times\omega}\varrho^2_5(t)~\partial^2_tu~u&=-\iint_{(0,T)\times\omega}s^3\lambda^4~ \xi^3_2~~e^{2s\varphi_2}~\partial^2_tu~u\\
&=-\iint_{(0,T)\times\omega}(\underbrace{s^{11/2}\lambda^{3}~\xi^{11/2}_2e^{2s\varphi_2-s\varphi_1}}_{\varrho_2}~u)\times(\underbrace{s^{-5/2}\lambda~\xi_2^{-5/2}e^{s\varphi_1}}_{\varrho_4})\partial^2_tu.
 \end{align*}
 Therefore, for $\varepsilon>0$ we get
 \begin{multline}\label{IPP dtu}
(1+\alpha^4) \iint_{(0,T)\times\omega}\varrho^2_5(t)|\partial_tu|^2\leq C\frac{(1+\alpha^4)^2(1+\alpha)^2M_\alpha^4}{\varepsilon} \iint_{(0,T)\times\omega}\varrho_2^2|u|^2\\ +\frac{\varepsilon}{2(1+\alpha)^2M_\alpha^4}\iint_{(0,T)\times\Omega}\varrho_4^2~|\partial_t^2u|^2,
 \end{multline}
 where $M_\alpha$ is the constant in \eqref{estimate-semig}.

The next step is dedicated to the estimate of the last term in \eqref{IPP dtu}.
For that we are going to use the parabolic regularizing property of \eqref{sys-simple_eta-ode}. We first deduce from \eqref{sys-simple_eta-ode} that 
\begin{equation}\label{od-e-adjoint1}
\frac{d}{dt}\left(\varrho_3\begin{bmatrix}
    u\\
    \eta\\
    \partial_t\eta
\end{bmatrix}\right)=\mathcal{A}\left(\varrho_3\begin{bmatrix}
    u\\
    \eta\\
    \partial_t\eta
\end{bmatrix}\right)+\varrho'_3\begin{bmatrix}
  u\\
  \eta\\
  \partial_t\eta
\end{bmatrix}~~~\text{in}~~(0,T),\begin{bmatrix}
  \varrho_3u\\
  \varrho_3\eta\\
  \varrho_3\partial_t\eta
\end{bmatrix}(0)=0.
\end{equation}
Thanks to \eqref{estimate-semig}, we have
\begin{align}\label{estimate-semig1}
 \|\varrho_3[u,\eta,\partial_t\eta]\|_{H^1(0,T; ~L^2(\Omega)\times\mathcal{D}(A_1^{1/2})\times L^2(\mathcal{I}))}\leq  M_\alpha\|\varrho'_3[u,\eta,\partial_t\eta]\|_{L^2(0,T; ~L^2(\Omega)\times\mathcal{D}(A_1^{1/2})\times L^2(\mathcal{I}))}.
\end{align}
From \eqref{estimate-semig1} and \eqref{bound-rho}, we find
\begin{multline}\label{estimate-rho3-rho0}
\|\varrho_3\partial_tu\|_{L^2(0,T; L^2(\Omega))}+\|\varrho_3\partial_t\eta\|_{L^2(0,T; H^2(\mathcal{I}))}+\|\varrho_3\partial^2_t\eta\|_{L^2(0,T; L^2(\mathcal{I}))}\\ \lesssim M_\alpha\left(\|\varrho_0u\|_{L^2(0,T; L^2(\Omega))}+\|\varrho_0\eta\|_{L^2(0,T; H^2(\mathcal{I}))}+\|\varrho_0\partial_t\eta\|_{L^2(0,T; L^2(\mathcal{I}))}\right).
\end{multline}
In addition, we deduce from \eqref{sys-simple_eta-ode}, that 
\begin{equation}\label{od-e-adjoint2}
\frac{d}{dt}\begin{pmatrix}\varrho_4\frac{d}{dt}\begin{bmatrix}
    u\\
    \eta\\
    \partial_t\eta
\end{bmatrix}\end{pmatrix}=\mathcal{A}\begin{pmatrix}\varrho_4\frac{d}{dt}\begin{bmatrix}
    u\\
    \eta\\
    \partial_t\eta
\end{bmatrix}\end{pmatrix}+\varrho'_4\frac{d}{dt}\begin{bmatrix}
  u\\
  \eta\\
  \partial_t\eta
\end{bmatrix}~~~\text{in}~~(0,T),\quad \begin{pmatrix}\varrho_4\frac{d}{dt}\begin{bmatrix}
  u\\
  \eta\\
  \partial_t\eta
\end{bmatrix}\end{pmatrix}(0)=0.
\end{equation}
From from \eqref{estimate-semig}, \eqref{bound-rho} and \eqref{estimate-rho3-rho0}, we obtain 

\begin{multline}\label{estimate-rho4-rho0}
\|\varrho_4\partial^2_tu\|_{L^2(0,T; L^2(\Omega))}+\|\varrho_4\partial^2_t\eta\|_{L^2(0,T; H^2(\mathcal{I}))}+\|\varrho_4\partial^3_t\eta\|_{L^2(0,T; L^2(\mathcal{I}))}\\ \lesssim M_\alpha^2\left(\|\varrho_0u\|_{L^2(0,T; L^2(\Omega))}+\|\varrho_0\eta\|_{L^2(0,T; H^2(\mathcal{I}))}+\|\varrho_0\partial_t\eta\|_{L^2(0,T; L^2(\mathcal{I}))}\right).
\end{multline}
Since $(u,\eta,\partial_t \eta)$ satisfies \eqref{sys-simple_eta-ode} by Proposition \ref{positivité-de-A} we have that inequality \eqref{estimate-MT} is true for $[u,\eta,\partial_t \eta]$. Hence, by multiplying it by $\varrho_0(t)$, and integrating over time, we obtain
\begin{multline}
    \label{estimate-MT2}
\|\varrho_0u\|_{L^2(0,T; L^2(\Omega))}+\|\varrho_0\eta\|_{L^2(0,T; H^2(\mathcal{I}))}+\|\varrho_0\partial_t\eta\|_{L^2(0,T; L^2(\mathcal{I}))}\\
\lesssim (1+\alpha)(\|\varrho_0 \partial_t u\|_{L^2(0,T; L^2(\Omega))}+\|\varrho_0\partial_t \eta\|_{L^2(0,T; H^2(\mathcal{I}))}+\|\varrho_0\partial_t^2\eta\|_{L^2(0,T; L^2(\mathcal{I}))}).
\end{multline}
Thus by combining \eqref{Carl-gele3}, \eqref{IPP dtu}, \eqref{estimate-rho4-rho0} and \eqref{estimate-MT2}, by choosing $\varepsilon>0$ small enough we deduce
\begin{multline}\label{Carl-final}
\int_{0}^T\varrho^2_0(t)~\big(\|\partial_tu\|_{L^2(\Omega)}^2+\|\partial_t\eta\|^2_{H^2(\mathcal{I})}+\|\partial^2_t\eta\|^2_{L^2(\mathcal{I})}\big)\\
\lesssim (1+\alpha^{10})M_\alpha^4\iint_{(0,T)\times\omega}\varrho^2_2(t)|u|^2+ \left(\alpha^{-8}+\alpha^8\right)\iint_{(0,T)\times J}\varrho^2_1(t)|\partial_t\eta|^2.
\end{multline}

Finally, \eqref{estimate-MT2} with \eqref{Carl-final} and $M_\alpha\lesssim \alpha^{-1}+\alpha$ proves Proposition \ref{obs-dteta}.
\end{proof}

We are now in position to prove an observability inequality for system \eqref{adjoint-final}.
\begin{theorem}\label{observability}
Let $\omega$ and $J$ be non-empty open subsets of $\Omega$ and $\mathcal{I}$ respectively. There exists $C>0$ and $\tau>0$ such that for all $T>0$ any solution of \eqref{sys-simple_eta-ode} satisfies:
\begin{equation}\label{inegalite-obs}
\|u(T,\cdot)\|^2_{L^2(\Omega)}+\|\eta(T,\cdot)\|^2_{H^2(\mathcal{I})}+\|\partial_t\eta(T,\cdot)\|^2_{L^2(\mathcal{I})} \leq Ce^{e^{C\lambda_\alpha}\left(1+\frac{1}{T}\right)}\Big(\iint_{(0,T)\times \omega}|u|^2+\iint_{(0,T)\times J}|\partial_t\eta|^2\Big),
\end{equation}
where $\lambda_\alpha=\tau\alpha^{-2}$ if $\alpha<1$ and $\lambda_\alpha=\tau\alpha^{2/3}$ if $\alpha\geq 1$.
\end{theorem}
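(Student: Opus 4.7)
The plan is to turn the weighted integral estimate \eqref{dteta} into the final-time bound \eqref{inegalite-obs} via the classical three-step argument: (i) lower-bound the weight $\varrho_0^2$ on a subinterval of $(0,T)$, (ii) upper-bound the right-hand side weights $\varrho_1^2$, $\varrho_2^2$ on $(0,T)$, and (iii) propagate the resulting time-integrated estimate to $t = T$ using the dissipativity of $\mathcal A$. Following Remark~\ref{RqCarleman-couplé}, I fix $\mu$ and $\lambda$ so that $\lambda\Psi$ has the same order as $\lambda_\alpha$, and take $s$ at the minimal admissible value $s = \widehat s_0(1+\alpha)(T^k + T^{k-1})$ with $k = 2$, so that $s/T^k \lesssim (1+\alpha)(1+1/T)$.

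For step (i), on $[T/4, 3T/4]$ one has $\ell(t) \in [3T^2/16, T^2/4]$, hence $\xi_1(t) \gtrsim e^{8\lambda\Psi}/T^k$ and $|\varphi_1(t)| \lesssim (e^{10\lambda\Psi} - e^{8\lambda\Psi})/T^k$, which yields
$$\inf_{t \in [T/4, 3T/4]} \varrho_0^2(t) \;\gtrsim\; s^3 \lambda^2\,\frac{e^{24\lambda\Psi}}{T^{3k}}\,\exp\!\bigl(-C(1+\alpha)(1+1/T)\,e^{10\lambda\Psi}\bigr).$$
Since $(1 + \alpha)$ and any polynomial in $\lambda$ are dominated by $e^{\lambda\Psi}$, the leading exponential becomes $\exp(-C' e^{C\lambda_\alpha}(1+1/T))$, which is the source of the double exponential in \eqref{inegalite-obs}. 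For step (ii), the sign computations $\varphi_2 < 0$ and $2\varphi_2 - \varphi_1 = -e^{8\lambda\Psi}(e^{\lambda\Psi}-1)^2/\ell^{k/2} < 0$ give $e^{2s\varphi_2}, e^{4s\varphi_2 - 2s\varphi_1} \leq 1$; the suprema of $\xi_2^7 e^{2s\varphi_2}$ and $\xi_2^{11} e^{4s\varphi_2 - 2s\varphi_1}$ reduce to a one-variable optimization in $u = \ell^{k/2}$ whose unique critical point lies above $\max \ell^{k/2} = T^k/2^k$ for our range of parameters, so both maxima are attained at $t = T/2$ and produce only polynomial factors in $s$, $\lambda$, $1/T$ multiplied by a moderate power of $e^{\lambda\Psi}$.

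For step (iii), a direct computation on $\mathcal D(\mathcal A)$, in which the two boundary terms produced by $\int_\Omega \Delta w\, w$ and $\int_{\mathcal I} \Lambda^* \partial_n w\, \zeta_2$ cancel thanks to $w = \Lambda \zeta_2$ on $\partial \Omega$, shows that
$$\bigl(\mathcal A Y, Y\bigr)_{\mathscr H} = -\|\nabla w\|_{L^2(\Omega)}^2 - \alpha \bigl(\|\partial_{x_1}\zeta_2\|_{L^2(\mathcal I)}^2 + \|\zeta_2\|_{L^2(\mathcal I)}^2\bigr) \leq 0.$$
Hence $\mathcal A$ is dissipative, the semigroup $e^{t\mathcal A}$ is a contraction, and $t \mapsto \|Y(t)\|_{\mathscr H}^2$ is nonincreasing along \eqref{sys-simple_eta-ode}, which gives $(T/2)\|Y(T)\|_{\mathscr H}^2 \leq \int_{T/4}^{3T/4} \|Y(t)\|_{\mathscr H}^2\,dt$. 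Combining this with steps (i) and (ii) applied to \eqref{dteta} delivers \eqref{inegalite-obs}. The main obstacle is the careful bookkeeping of every $\alpha$-dependent factor -- the prefactors $(\alpha^{-4} + \alpha^{16})$ and $(\alpha^{-8} + \alpha^{10})$ from \eqref{dteta}, the polynomial powers of $s$ and $\lambda$ from steps (i)-(ii), and the polynomial powers of $e^{\lambda\Psi}$ -- and verifying that they are all absorbed into $\exp(e^{C\lambda_\alpha}(1+1/T))$ in both regimes $0 < \alpha < 1$ and $\alpha \geq 1$; using the contractivity of $e^{t\mathcal A}$ rather than a general uniform stability estimate is what keeps the dependence clean, in particular it avoids an $M_\alpha$ factor that could blow up for small $\alpha$.
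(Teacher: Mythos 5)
Your proposal is correct and follows essentially the same route as the paper: restrict \eqref{dteta} to $(T/4,3T/4)$, bound $\varrho_0^2$ from below there by $e^{-e^{C\lambda_\alpha}(1+1/T)}$, bound the observation weights from above on $(0,T)$ using the negativity of $\varphi_2$ and of $2\varphi_2-\varphi_1$, and propagate to $t=T$ by the decay of the semigroup. The only implementation differences are harmless: you bound $\varrho_1^2,\varrho_2^2$ by polynomial factors evaluated at $t=T/2$ instead of the paper's uniform bound $C\lambda_\alpha^p$ via $(s\xi_2)^p e^{-\varepsilon s\xi_2}\lesssim 1$ (both are absorbed by the doubly exponential constant), and you replace the citation of uniform stability in Proposition \ref{PropAnalytiqueEtStable} by a direct computation of $(\mathcal A Y,Y)_{\mathscr H}\leq 0$, which is a correct calculation (the boundary terms indeed cancel through $w=\Lambda\zeta_2$) and makes the $\alpha$-independence of that step explicit.
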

\begin{proof}
First, from \eqref{dteta} and \eqref{DefVarrho01} we deduce that
\begin{equation}\label{Est11-03}
\int_{T/4}^{3T/4}\varrho^2_0(t)\|[u,\eta,\partial_t\eta]\|^2_{\mathscr{H}}\lesssim (\alpha^{-4}+\alpha^{16})\iint_{(0,T)\times\omega}\varrho^2_2(t)|u|^2+ (\alpha^{-8}+\alpha^{10})\iint_{(0,T)\times J}\varrho^2_1(t)|\partial_t\eta|^2.
\end{equation}
Set $s=\widehat{s}_0(1+\alpha)(T^k+T^{k-1})$ and $\lambda_\alpha=\tau\alpha^{-2}$ if $\alpha<1$ and $\lambda_\alpha=\tau\alpha^{2/3}$ if $\alpha\geq 1$ with $\tau>0$ large enough so that it satisfies the assumptions of Theorem \ref{Carleman-couplé} (see Remark \ref{RqCarleman-couplé}). Since $\lambda_\alpha\geq \tau$ we can choose $\tau$ large enough so that
$$
\forall\alpha>0\quad 2-e^{\lambda_\alpha\Psi}-e^{-\lambda_\alpha\Psi}<-\varepsilon,
$$
for some $\varepsilon>0$ independent on $\alpha$. The above inequality guarantee that $\varphi_2-\varphi_1\leq -\varepsilon s \xi_2$, and that 
$$
\varrho_2^2\leq \lambda_\alpha^6(s\xi_2)^{11} e^{-2\varepsilon  (s \xi_2)}\leq C \lambda_\alpha^6.
$$
Similarly, we can choose $\tau$ large enough so that $1-e^{2\lambda_\alpha\Psi}<-\varepsilon$ and $\varphi_1\leq -\varepsilon s \xi_1$. Thus we deduce $\varrho_1^2\leq C \lambda_\alpha^4$.

Next, for $\tau$ large enough we have $e^{10\lambda_\alpha\Psi}-e^{8\lambda_\alpha\Psi}>\varepsilon e^{10\lambda_\alpha\Psi}$. Then with $s=\widehat{s}_0(1+\alpha)(T^k+T^{k-1})$ and using the fact that $\frac{3 T^2}{16}\leq \ell(t)\leq \frac{T^2}{4} $ on $(\frac{T}{4}, \frac{3T }{4})$ we deduce 
$$
s\varphi_1(t)\gtrsim -(1+\alpha)e^{10\lambda_\alpha\Psi}\frac{T^k+T^{k-1}}{\ell^{k/2}(t)}\gtrsim- e^{C\lambda_\alpha}\frac{T^k+T^{k-1}}{T^k}=-e^{C\lambda_\alpha}\left(1+\frac{1}{T}\right)\quad \mbox{ for all $t\in (\frac{T}{4}, \frac{3T }{4})$,}
$$
and from \eqref{E1} and $\lambda_\alpha\geq \tau$, we deduce
$$
    \varrho_0^2(t)=s^{3} \lambda_\alpha^2~\xi_1^{3}(t)~e^{2s\varphi_1(t)}\gtrsim Ce^{2s\varphi_1(t)}\gtrsim e^{-e^{C\lambda_\alpha}\left(1+\frac{1}{T}\right)}.
$$
By combining \eqref{Est11-03} with the above estimates of $\varrho_0^2$, $\varrho_1^2$ and $\varrho_2^2$ we deduce
\begin{equation}\label{Est11-03-2}
\int_{T/4}^{3T/4}\|[u,\eta,\partial_t\eta]\|^2_{\mathscr{H}}\lesssim e^{e^{C\lambda_\alpha}\left(1+\frac{1}{T}\right)}\left(\iint_{(0,T)\times\omega}|u|^2+ \iint_{(0,T)\times J}|\partial_t\eta|^2\right).
\end{equation}
Finally, since $(e^{\mathcal{A}t})_{t\geq 0}$ is strongly continuous and stable in $\mathscr{H}$ we have that
$\|[u,\eta,\partial_t\eta](T)\|_{\mathscr{H}}\lesssim \|[u,\eta,\partial_t\eta](t)\|_{\mathscr{H}}$ for all $t\in (\frac{T}{4},\frac{3T}{4}).$ 
Then
$$\frac T2 \|[u,\eta,\partial_t\eta](T)\|_{\mathscr{H}} \leq \int_{T/4}^{3T/4}\|[u,\eta,\partial_t\eta](t)\|_{\mathscr{H}},$$ 
and with \eqref{Est11-03-2} this concludes the proof.
\end{proof}


\section{Carleman estimates for the damped beam equation}\label{carleman-poutre}
In this section, we obtain a Carleman estimate for the damped beam equation.

We set 
$$f_\eta:=\partial^2_t\eta+\partial^4_{x_1}\eta-\alpha\partial_t\partial^2_{x_1}\eta\quad \text{and}\quad z=e^{s\varphi_0}\eta.$$



\noindent Direct computations yield to
\begin{equation}\label{decomp}
    \mathcal{M}_1z+\mathcal{M}_2z=e^{s\varphi}f_\eta-\mathcal{N}z,
\end{equation}
with
\begin{equation}\label{DefM1M2}
\mathcal{M}_1 z=\mathcal{M}_{11}z+\mathcal{M}_{12}z\quad \mbox{ and }\quad \mathcal{M}_2 z=\mathcal{M}_{21}z+\mathcal{M}_{22}z,
\end{equation}where
\begin{align}\label{DefM11}
\mathcal{M}_{11}z&=s^4(\partial_{x_1}\varphi_0)^4z+6s^2(\partial_{x_1}\varphi_0)^2\partial^2_{x_1}z+\partial^4_{x_1}z+2s\alpha(\partial_{x_1}\varphi_0)\partial_t\partial_{x_1}z+\partial^2_tz,\\
\mathcal{M}_{12}z&=s^2(\partial_t\varphi_0)^2z+s\alpha(\partial_t\varphi_0)\partial^2_{x_1}z+s^3\alpha(\partial_{x_1}\varphi_0)^2(\partial_t\varphi_0)z,\label{DefM12}
\end{align}
and
\begin{align}\notag
\mathcal{M}_{21}z&=-4s^3(\partial_{x_1}\varphi_0)^3\partial_{x_1}z-4s(\partial_{x_1}\varphi_0)\partial^3_{x_1}z-\alpha\partial_t\partial^2_{x_1}z\\
&-s^2\alpha(\partial_{x_1}\varphi_0)^2\partial_tz-6(1+\beta)s^3(\partial_{x_1}\varphi_0)^2(\partial^2_{x_1}\varphi_0)z, \label{DefM21}\\ 
\mathcal{M}_{22}z=&-2s(\partial_t\varphi_0)\partial_tz-2s^2\alpha(\partial_t\varphi_0)(\partial_{x_1}\varphi)\partial_{x_1}z,\label{DefM22}
\end{align}
and
\begin{align*}
\mathcal{N}z=&\alpha(s(\partial^2_{x_1}\varphi_0)\partial_tz+2s(\partial_t\partial_{x_1}\varphi_0)\partial_{x_1}z+s(\partial_t\partial^2_{x_1}\varphi_0)z-s^2(\partial^2_{x_1}\varphi_0)(\partial_t\varphi_0)z-2s^2(\partial_t\partial_{x_1}\varphi_0)(\partial_{x_1}\varphi_0)z)\\
&-s(\partial^2_t\varphi_0)z-6s(\partial^2_{x_1}\varphi_0)\partial^2_{x_1}z-4s(\partial^3_{x_1}\varphi_0)\partial_{x_1}z+12s^2(\partial^2_{x_1}\varphi_0)(\partial_{x_1}\varphi_0)\partial_{x_1}z\\
&-s(\partial^4_{x_1}\varphi_0)z+4s^2(\partial^3_{x_1}\varphi_0)(\partial_{x_1}\varphi_0)z+3s^2(\partial^2_{x_1}\varphi_0)^2z+6\beta s^3(\partial^2_{x_1}\varphi_0)(\partial_{x_1}\varphi_0)^2z.
\end{align*}
In the above setting $\beta$ is a free real-valued parameter that will play a role only in the Proof of Theorem \ref{poutre3}. 

For the following we set
\begin{equation}\label{DefR0}
\mathcal{R}_0=2\iint_{(0,T)\times\mathcal{I}}|\mathcal{N}z|^2
\end{equation}
and from \eqref{decomp} we deduce
\begin{align}\label{ineqalite1}\notag
&\|\mathcal{M}_1z\|^2_{L^2((0,T)\times\mathcal{I})}+\|\mathcal{M}_2z\|^2_{L^2((0,T)\times\mathcal{I})}+2\iint_{(0,T)\times\mathcal{I}} \mathcal{M}_1z\cdot \mathcal{M}_2z\\&=\iint_{(0,T)\times\mathcal{I}}|e^{s\varphi}f_\eta-\mathcal{N}z|^2
\leq 2\iint_{(0,T)\times\mathcal{I}}e^{2s\varphi_0}|f_\eta|^2+\mathcal{R}_0.
\end{align}
From by using  \eqref{poutre1},\eqref{n2-2},\eqref{n3-2},\eqref{n1-2}, and with $s_0\geq 1$, we deduce that $\mathcal{R}_0$ defined in \eqref{DefR0} satisfies
\begin{equation}\label{EstR0}
\left|\mathcal{R}_0\right|\lesssim \left(\frac{1}{s_0}+\frac{\alpha^2}{s_0^2}\right)\iint_{(0,T)\times\mathcal{I}}\left(s^7\mu^8\xi^7_0|z|^2+s^5\mu^6\xi_0^5|\partial_{x_1}z|^2+s^3\mu^4\xi_0^3|\partial^2_{x_1}z|^2+(1+\alpha^2)s^3\mu^4\xi_0^3|\partial_tz|^2\right).
\end{equation}

Let us denote by $I_{ij}$ the cross product of the $i$'th term of $\mathcal{M}_1$ with the $j$'th term of $\mathcal{M}_2$. We have:
\begin{align*}
&\iint_{(0,T)\times\mathcal{I}}\mathcal{M}_1z\cdot \mathcal{M}_2z=\iint_{(0,T)\times\mathcal{I}}\mathcal{M}_{11}z\cdot \mathcal{M}_{21}z+\mathcal{R}_1,
\end{align*}
with 
\begin{equation}\label{DefCPM11M21}
\iint_{(0,T)\times\mathcal{I}}\mathcal{M}_{11}z\cdot \mathcal{M}_{21}z=\sum\limits_{i=1,j=1}^{i=5,j=5}I_{ij},
\end{equation}
and
\begin{equation}\label{DefR1}
\mathcal{R}_1=\sum\limits_{(i,j)\in \mathbb{I}}I_{ij},\quad\mathbb{I}=\{1,\dots 8\}\times \{1,\dots 7\}\backslash \{1,\dots, 5\}\times \{1,\dots,5\}. 
\end{equation}
We prove in Section \ref{SectionEstR1} that 
\begin{multline}\label{EstR1}
\left|\mathcal{R}_1\right|\lesssim \left(\frac{1+\alpha}{s_0}+\frac{\alpha^2}{s_0^2}\right)\iint_{(0,T)\times\mathcal{I}}\left(s^7\mu^8\xi^7_0|z|^2+s^5\mu^6\xi_0^5|\partial_{x_1}z|^2+s^3\mu^4\xi_0^3|\partial^2_{x_1}z|^2\right.\\\left.+(1+\alpha^2)s^3\mu^4\xi_0^3|\partial_tz|^2+s\mu^2 \xi_0|\partial_t\partial_{x_1}z|^2\right).
\end{multline}
From \eqref{ineqalite1} we deduce
\begin{align}\label{ineqalite12}
\|\mathcal{M}_1z\|^2_{L^2((0,T)\times\mathcal{I})}+\|\mathcal{M}_2z\|^2_{L^2((0,T)\times\mathcal{I})}+2\iint_{(0,T)\times\mathcal{I}} \mathcal{M}_{11}z\cdot \mathcal{M}_{21}z
\leq 2\iint_{(0,T)\times\mathcal{I}}e^{2s\varphi_0}|f_\eta|^2+\mathcal{R}_0+\mathcal{R}_1.
\end{align}
\subsection{Computation of the cross product}
The present section is dedicated to the computation of $\iint_{(0,T)\times\mathcal{I}}\mathcal{M}_{11}z\cdot \mathcal{M}_{21}z$, that is the computations of each $I_{ij}$ defined in \eqref{DefCPM11M21}.  
This leads to tedious but straightforward computations. 
\paragraph{-- Computation of $I_{11}$ --}
\begin{align*}
  I_{11}&=-4s^7\iint_{(0,T)\times\mathcal{I}}(\partial_{x_1}\varphi_0)^7z\partial_{x_1}z
  =14s^7\iint_{(0,T)\times\mathcal{I}} (\partial_{x_1}\varphi_0)^6(\partial^2_{x_1}\varphi_0)|z|^2.
\end{align*}
\paragraph{-- Computation of $I_{12}$ --}
\begin{align*}
    I_{12}&=-4s^5 \iint_{(0,T)\times\mathcal{I}}(\partial_{x_1}\varphi_0)^5z\partial^3_{x_1}z\\
    &=20s^5\iint_{(0,T)\times\mathcal{I}}(\partial_{x_1}\varphi_0)^4(\partial_{x_1}^2\varphi_0)z\partial^2_{x_1}z+2s^5\iint_{(0,T)\times\mathcal{I}}(\partial_{x_1}\varphi_0)^5\partial_{x_1}|\partial_{x_1}z|^2\\
&=10s^5\iint_{(0,T)\times\mathcal{I}}\partial_{x_1}^2[(\partial_{x_1}\varphi_0)^4(\partial_{x_1}^2\varphi_0)]|z|^2-30s^5\iint_{(0,T)\times\mathcal{I}}(\partial_{x_1}\varphi_0)^4(\partial_{x_1}^2\varphi_0)|\partial_{x_1}z|^2\\
&=-30s^5\iint_{(0,T)\times\mathcal{I}}(\partial_{x_1}\varphi_0)^4(\partial_{x_1}^2\varphi_0)|\partial_{x_1}z|^2+\mathcal{R}_{12},
\end{align*}
where
\begin{align*}
\mathcal{R}_{12}=10s^5\iint_{(0,T)\times\mathcal{I}}\partial_{x_1}^2[(\partial_{x_1}\varphi_0)^4(\partial_{x_1}^2\varphi_0)]|z|^2.
\end{align*}
Moreover, from \eqref{poutre1} and \eqref{n1-2} we deduce
\begin{equation}\label{EstR12}
|\mathcal{R}_{12}|\lesssim \frac{1}{s_0^2} s^7 \mu^8\iint_{(0,T)\times\mathcal{I}}\xi_0^7 |z|^2.
\end{equation}
\paragraph{-- Computation of $I_{13}$ --}
\begin{align*}
I_{13}&=-s^4\alpha \iint_{(0,T)\times\mathcal{I}}(\partial_{x_1}\varphi_0)^4\partial_t\partial_{x_1}^2zz\\
&=4s^4\alpha \iint_{(0,T)\times\mathcal{I}}(\partial_{x_1}\varphi_0)^3(\partial^2_{x_1}\varphi_0)\partial_t\partial_{x_1}zz-2s^4\alpha\iint_{(0,T)\times\mathcal{I}}(\partial_t\partial_{x_1}\varphi_0)(\partial_{x_1}\varphi_0)^3|\partial_{x_1}z|^2\\
&=-4s^4\alpha\iint_{(0,T)\times\mathcal{I}}(\partial_{x_1}\varphi_0)^3(\partial^2_{x_1}\varphi_0)\partial_{x_1}z\partial_tz+2s^4\alpha \iint_{(0,T)\times\mathcal{I}}\partial_{x_1}\partial_t[(\partial_{x_1}\varphi_0)^3(\partial^2_{x_1}\varphi_0)]|z|^2\\
&\quad -2s^4\alpha\iint_{(0,T)\times\mathcal{I}}(\partial_t\partial_{x_1}\varphi_0)(\partial_{x_1}\varphi_0)^3|\partial_{x_1}z|^2\\
&=-4s^4\alpha\iint_{(0,T)\times\mathcal{I}}(\partial_{x_1}\varphi_0)^3(\partial^2_{x_1}\varphi_0)\partial_{x_1}z\partial_tz+\mathcal{R}_{13},
\end{align*}
with
\begin{align*}
\mathcal{R}_{13}&=2s^4\alpha \iint_{(0,T)\times\mathcal{I}}\partial_{x_1}\partial_t[(\partial_{x_1}\varphi_0)^3(\partial^2_{x_1}\varphi_0)]|z|^2-2s^4\alpha\iint_{(0,T)\times\mathcal{I}}(\partial_t\partial_{x_1}\varphi_0)(\partial_{x_1}\varphi_0)^3|\partial_{x_1}z|^2.
\end{align*}
Moreover, from \eqref{poutre1}, \eqref{n2-2} and \eqref{n1-2} we deduce
\begin{align}
|\mathcal{R}_{13}|\lesssim \frac{\alpha}{s_0^3}s^7\mu^6\iint_{(0,T)\times\mathcal{I}}\xi_0^7|z|^2+\frac{\alpha}{s_0} s^5\mu^4\iint_{(0,T)\times\mathcal{I}}\xi_0^5|\partial_{x_1}z|^2.\label{EstR13}
\end{align}
\paragraph{-- Computation of $I_{14}$ --}
\begin{align*}
I_{14}&=-s^6\alpha\iint_{(0,T)\times\mathcal{I}}(\partial_{x_1}\varphi_0)^6\partial_tzz
=3s^6\alpha\iint_{(0,T)\times\mathcal{I}}(\partial_{x_1}\varphi_0)^5(\partial_t\partial_{x_1}\varphi_0)|z|^2=\mathcal{R}_{14}.
\end{align*}
Moreover, from \eqref{poutre1} and \eqref{n2-2} we deduce
\begin{align}\label{EstI14}
|\mathcal{R}_{14}|&\lesssim \frac{\alpha}{s_0}s^7\mu^6 \iint_{(0,T)\times\mathcal{I}}\xi_0^7|z|^2.
\end{align}
\paragraph{-- Computation of $I_{15}$ --}
$$I_{15}=-6(1+\beta)s^7\iint_{(0,T)\times\mathcal{I}}(\partial_{x_1}\varphi_0)^6(\partial^2_{x_1}\varphi_0)|z|^2.$$
\paragraph{-- Computation of $I_{21}$ --}
\begin{align*}
I_{21}&=-24s^5\iint_{(0,T)\times\mathcal{I}}(\partial_{x_1}\varphi_0)^5\partial_{x_1}z\partial^2_{x_1}z
=60s^5\iint_{(0,T)\times\mathcal{I}}(\partial_{x_1}\varphi_0)^4(\partial^2_{x_1}\varphi_0)|\partial_{x_1}z|^2.
\end{align*}
\paragraph{-- Computation of $I_{22}$ --}
\begin{align*}
I_{22}&=-24s^3\iint_{(0,T)\times\mathcal{I}}(\partial_{x_1}\varphi_0)^3\partial^2_{x_1}z\partial^3_{x_1}z
=36s^3\iint_{(0,T)\times\mathcal{I}}(\partial_{x_1}\varphi_0)^2(\partial^2_{x_1}\varphi_0)|\partial^2_{x_1}z|^2.
\end{align*}
\paragraph{-- Computation of $I_{23}$ --}
\begin{align*}
I_{23}&=-6s^2\alpha\iint_{(0,T)\times\mathcal{I}}(\partial_{x_1}\varphi_0)^2\partial_t\partial^2_{x_1}z\partial^2_{x_1}z
=6s^2\alpha\iint_{(0,T)\times\mathcal{I}}(\partial_t\partial_{x_1}\varphi_0)(\partial_{x_1}\varphi_0)|\partial^2_{x_1}z|^2=\mathcal{R}_{23}.
\end{align*}
Moreover, from \eqref{poutre1} and \eqref{n2-2} we deduce
\begin{align}
|\mathcal{R}_{23}|&\lesssim \frac{\alpha}{s_0} s^3\mu^2\iint_{(0,T)\times\mathcal{I}}\xi_0^3|\partial^2_{x_1}z|^2.\label{EstI23}
\end{align}
\paragraph{-- Computation of $I_{24}$ --}
\begin{align*}
I_{24}&=-6s^4\alpha\iint_{(0,T)\times\mathcal{I}}(\partial_{x_1}\varphi_0)^4\partial_tz\partial^2_{x_1}z\\
&=24s^4\alpha\iint_{(0,T)\times\mathcal{I}}(\partial_{x_1}\varphi_0)^3(\partial^2_{x_1}\varphi_0)\partial_tz\partial_{x_1}z+\mathcal{R}_{24},
\end{align*}
with $$\mathcal{R}_{24}=-12s^4\alpha\iint_{(0,T)\times\mathcal{I}}(\partial_{x_1}\varphi_0)^3(\partial_t\partial_{x_1}\varphi_0)|\partial_{x_1}z|^2.$$
Moreover, from \eqref{poutre1} and \eqref{n2-2} we deduce
\begin{align}
|\mathcal{R}_{24}|\lesssim \frac{\alpha}{s_0}s^5\mu^4\iint_{(0,T)\times\mathcal{I}} \xi_0^5|\partial_{x_1}z|^2.\label{EstR24}
\end{align}
\paragraph{-- Computation of $I_{25}$ --}
\begin{align*}
I_{25}&=-36(1+\beta)s^5\iint_{(0,T)\times\mathcal{I}}(\partial_{x_1}\varphi_0)^4(\partial^2_{x_1}\varphi_0)\partial^2_{x_1}zz\\
&=36(1+\beta)s^5\iint_{(0,T)\times\mathcal{I}}(\partial_{x_1}\varphi_0)^4(\partial^2_{x_1}\varphi_0)|\partial_{x_1}z|^2+\mathcal{R}_{25},
\end{align*}
with
\begin{align*}
\mathcal{R}_{25}&=-18(1+\beta) s^5\iint_{(0,T)\times\mathcal{I}}\partial_{x_1}^2[(\partial_{x_1}\varphi_0)^4(\partial^2_{x_1}\varphi_0)]|z|^2.
\end{align*}
Moreover, from \eqref{poutre1} and \eqref{n1-2} we deduce
\begin{align}\label{EstR25}
|\mathcal{R}_{25}|&\lesssim \frac{1}{s_0^2}s^5\mu^8\iint_{(0,T)\times\mathcal{I}}\xi_0^7 |z|^2.
\end{align}
\paragraph{-- Computation of $I_{31}$ --}
\begin{align*}
I_{31}&=-4s^3\iint_{(0,T)\times\mathcal{I}}(\partial_{x_1}\varphi_0)^3\partial_{x_1}z\partial^4_{x_1}z\\
&=12s^3\iint_{(0,T)\times\mathcal{I}}(\partial_{x_1}\varphi_0)^2(\partial^2_{x_1}\varphi_0)\partial_{x_1}z\partial^3_{x_1}z+4s^3\iint_{(0,T)\times\mathcal{I}}(\partial_{x_1}\varphi_0)^3\partial^2_{x_1}z\partial^3_{x_1}z\\
&=6s^3\iint_{(0,T)\times\mathcal{I}}\partial_{x_1}^2[(\partial_{x_1}\varphi_0)^2(\partial^2_{x_1}\varphi_0)]|\partial_{x_1}z|^2-18s^3\iint_{(0,T)\times\mathcal{I}}(\partial_{x_1}\varphi_0)^2(\partial^2_{x_1}\varphi_0)|\partial_{x_1}^2z|^2\\
&=-18s^3\iint_{(0,T)\times\mathcal{I}}(\partial_{x_1}\varphi_0)^2(\partial^2_{x_1}\varphi_0)|\partial^2_{x_1}z|^2+\mathcal{R}_{31},
\end{align*}
with
\begin{align*}
\mathcal{R}_{31}&=6s^3\iint_{(0,T)\times\mathcal{I}}\partial_{x_1}^2[(\partial_{x_1}\varphi_0)^2(\partial^2_{x_1}\varphi_0)]|\partial_{x_1}z|^2.
\end{align*}
Moreover, from \eqref{poutre1} and \eqref{n1-2} we deduce
\begin{align}\label{EstR31}
|\mathcal{R}_{31}|&\lesssim \frac{1}{s_0^2}s^5\mu^6 \iint_{(0,T)\times\mathcal{I}}\xi_0^5|\partial_{x_1}z|^2.
\end{align}
\paragraph{-- Computation of $I_{32}$ --}
$$I_{32}=-4s\iint_{(0,T)\times\mathcal{I}}(\partial_{x_1}\varphi_0)\partial^3_{x_1}z\partial^4_{x_1}z=2s\iint_{(0,T)\times\mathcal{I}}(\partial^2_{x_1}\varphi_0)|\partial^3_{x_1}z|^2.$$
\paragraph{-- Computation of $I_{33}$ --}
$$I_{33}=-\alpha\iint_{(0,T)\times\mathcal{I}}\partial^4_{x_1}z \partial_t\partial^2_{x_1}z=0.$$
\paragraph{-- Computation of $I_{34}$ --}
\begin{align*}
I_{34}&=-s^2\alpha\iint_{(0,T)\times\mathcal{I}}(\partial_{x_1}\varphi_0)^2\partial^4_{x_1}z\partial_tz\\
&=2s^2\alpha\iint_{(0,T)\times\mathcal{I}}(\partial_{x_1}\varphi_0)(\partial_{x_1}^2\varphi_0)\partial^3_{x_1}z\partial_tz+s^2\alpha\iint_{(0,T)\times\mathcal{I}}(\partial_{x_1}\varphi_0)^2\partial^3_{x_1}z\partial_t\partial_{x_1}z.\\
\end{align*}
A new integration by parts leads to 
\begin{align*}
I_{34}&=-4s^2\alpha~\iint_{(0,T)\times\mathcal{I}}(\partial_{x_1}\varphi_0)(\partial_{x_1}^2\varphi_0)\partial^2_{x_1}z\partial_t\partial_{x_1}z-s^2\alpha~\iint_{(0,T)\times\mathcal{I}}(\partial_{x_1}\varphi_0)^2\partial^2_{x_1}z\partial_t\partial_{x_1}^2z\\
&\quad -2s^2\alpha\iint_{(0,T)\times\mathcal{I}}\partial_{x_1}[(\partial_{x_1}\varphi_0)(\partial_{x_1}^2\varphi_0)]\partial^2_{x_1}z\partial_tz\\
&=-4s^2\alpha\iint_{(0,T)\times\mathcal{I}}(\partial_{x_1}\varphi_0)(\partial_{x_1}^2\varphi_0)\partial^2_{x_1}z\partial_t\partial_{x_1}z+s^2\alpha\iint_{(0,T)\times\mathcal{I}}(\partial_{x_1}\varphi_0)(\partial_t\partial_{x_1}\varphi_0)|\partial^2_{x_1}z|^2\\
&\quad -2s^2\alpha\iint_{(0,T)\times\mathcal{I}}\partial_{x_1}[(\partial_{x_1}\varphi_0)(\partial_{x_1}^2\varphi_0)]\partial^2_{x_1}z\partial_tz\\
&=-4s^2\alpha\iint_{(0,T)\times\mathcal{I}}(\partial_{x_1}\varphi_0)(\partial_{x_1}^2\varphi_0)\partial^2_{x_1}z\partial_t\partial_{x_1}z+\mathcal{R}_{34}
\end{align*}
with
$$
\mathcal{R}_{34}=s^2\alpha\iint_{(0,T)\times\mathcal{I}}(\partial_{x_1}\varphi_0)(\partial_t\partial_{x_1}\varphi_0)|\partial^2_{x_1}z|^2-2s^2\alpha\iint_{(0,T)\times\mathcal{I}}\partial_{x_1}[(\partial_{x_1}\varphi_0)(\partial_{x_1}^2\varphi_0)]\partial^2_{x_1}z\partial_tz.
$$
Moreover, from \eqref{poutre1}, \eqref{n2-2} and \eqref{n1-2} we deduce
\begin{align}\notag
|\mathcal{R}_{34}|&\lesssim \frac{\alpha}{s_0}s^3\mu^2\iint_{(0,T)\times\mathcal{I}}\xi_0^{3}|\partial^2_{x_1}z|^2+\frac{\alpha}{s_0}s^3\mu^4\iint_{(0,T)\times\mathcal{I}}\xi_0^3|\partial^2_{x_1}z||\partial_tz|\\
&\lesssim \frac{\alpha}{s_0}s^3\mu^4\iint_{(0,T)\times\mathcal{I}}\xi_0^{3}|\partial^2_{x_1}z|^2+\frac{\alpha}{s_0}s^3\mu^4\iint_{(0,T)\times\mathcal{I}}\xi_0^2|\partial_tz|^2. \label{EstR34}
\end{align}
\paragraph{-- Computation of $I_{35}$ --}
\begin{align*}
 I_{35}=& -6(1+\beta)s^3\iint_{(0,T)\times\mathcal{I}}(\partial_{x_1}\varphi_0)^2(\partial^2_{x_1}\varphi_0)z\partial^4_{x_1}z\\
 =& 6(1+\beta)s^3\iint_{(0,T)\times\mathcal{I}}\partial_{x_1}[(\partial_{x_1}\varphi_0)^2(\partial^2_{x_1}\varphi_0)]z\partial^3_{x_1}z+6(1+\beta)s^3\iint_{(0,T)\times\mathcal{I}}(\partial_{x_1}\varphi_0)^2(\partial^2_{x_1}\varphi_0)\partial_{x_1}z\partial^3_{x_1}z\\
 =& -6(1+\beta)s^3\iint_{(0,T)\times\mathcal{I}}\partial_{x_1}^2[(\partial_{x_1}\varphi_0)^2(\partial^2_{x_1}\varphi_0)]z\partial^2_{x_1}z\\
 &-12(1+\beta)s^3\iint_{(0,T)\times\mathcal{I}}\partial_{x_1}[(\partial_{x_1}\varphi_0)^2(\partial^2_{x_1}\varphi_0)]\partial_{x_1}z\partial^2_{x_1}z\\
 &-6(1+\beta)s^3\iint_{(0,T)\times\mathcal{I}}(\partial_{x_1}\varphi_0)^2(\partial^2_{x_1}\varphi_0)|\partial^2_{x_1}z|^2\\
 =& -3(1+\beta)s^3\iint_{(0,T)\times\mathcal{I}}\partial_{x_1}^4[(\partial_{x_1}\varphi_0)^2(\partial^2_{x_1}\varphi_0)]|z|^2+6(1+\beta)s^3\iint_{(0,T)\times\mathcal{I}}\partial_{x_1}^2[(\partial_{x_1}\varphi_0)^2(\partial^2_{x_1}\varphi_0)]|\partial_{x_1}z|^2\\
 & -6(1+\beta)s^3\iint_{(0,T)\times\mathcal{I}}(\partial_{x_1}\varphi_0)^2(\partial^2_{x_1}\varphi_0)|\partial^2_{x_1}z|^2\\
 =&-6(1+\beta)s^3\iint_{(0,T)\times\mathcal{I}}(\partial_{x_1}\varphi_0)^2(\partial^2_{x_1}\varphi_0)|\partial^2_{x_1}z|^2+\mathcal{R}_{35},
\end{align*}
where 
\begin{align*}
   \mathcal{R}_{35}&= -3(1+\beta)s^3\iint_{(0,T)\times\mathcal{I}}\partial_{x_1}^4[(\partial_{x_1}\varphi_0)^2(\partial^2_{x_1}\varphi_0)]|z|^2+6(1+\beta)s^3\iint_{(0,T)\times\mathcal{I}}\partial_{x_1}^2[(\partial_{x_1}\varphi_0)^2(\partial^2_{x_1}\varphi_0)]|\partial_{x_1}z|^2.
\end{align*}
Moreover, from \eqref{poutre1} and \eqref{n1-2} we deduce
\begin{align}
   |\mathcal{R}_{35}|&\lesssim \frac{1}{s_0^4}s^7\mu^8\iint_{(0,T)\times\mathcal{I}}\xi_0^7|z|^2+\frac{1}{s_0^2}s^5\mu^6\iint_{(0,T)\times\mathcal{I}}\xi_0^5|\partial_{x_1}z|^2.\label{EstR35}
\end{align}
\paragraph{-- Computation of $I_{41}$ --}
\begin{align*}
I_{41}&=-8s^4\alpha\iint_{(0,T)\times\mathcal{I}}(\partial_{x_1}\varphi_0)^4\partial_{x_1}z\partial_t\partial_{x_1}z=16s^4\alpha\iint_{(0,T)\times\mathcal{I}}\partial_t\partial_{x_1}\varphi_0(\partial_{x_1}\varphi_0)^3|\partial_{x_1}z|^2=\mathcal{R}_{41}.
\end{align*}
Moreover, from \eqref{poutre1} and \eqref{n1-2} we deduce
\begin{align}
|\mathcal{R}_{41}|&\lesssim \frac{\alpha}{s_0}s^5\mu^4\iint_{(0,T)\times\mathcal{I}}\xi_0^5|\partial_{x_1}z|^2.\label{EstI41}
\end{align}
\paragraph{-- Computation of $I_{42}$ --}
\begin{align*}
I_{42}&=-8s^2\alpha\iint_{(0,T)\times\mathcal{I}}(\partial_{x_1}\varphi_0)^2\partial^3_{x_1}z\partial_t\partial_{x_1}z=16s^2\alpha\iint_{(0,T)\times\mathcal{I}}(\partial_{x_1}\varphi_0)(\partial^2_{x_1}\varphi_0)\partial^2_{x_1}z\partial_t\partial_{x_1}z+\mathcal{R}_{42},
\end{align*}
with 
$$\mathcal{R}_{42}=-8s^2\alpha\iint_{(0,T)\times\mathcal{I}}(\partial_{x_1}\varphi_0)(\partial_t\partial_{x_1}\varphi_0)|\partial^2_{x_1}z|^2.$$
Moreover, from \eqref{poutre1} and \eqref{n1-2} we deduce
\begin{align}
|\mathcal{R}_{42}|&\lesssim \frac{\alpha}{s_0}s^3\mu^2\iint_{(0,T)\times\mathcal{I}}\xi_0^3|\partial^2_{x_1}z|^2.\label{EstR42}
\end{align}
\paragraph{-- Computation of $I_{43}$ --}
\begin{align*}
I_{43}&=-2s\alpha^2\iint_{(0,T)\times\mathcal{I}}(\partial_{x_1}\varphi_0)\partial_t\partial_{x_1}z\partial_t\partial^2_{x_1}z=s\alpha^2\iint_{(0,T)\times\mathcal{I}}(\partial^2_{x_1}\varphi_0)|\partial_t\partial_{x_1}z|^2.
\end{align*}
\paragraph{-- Computation of $I_{44}$ --}
\begin{align*}
I_{44}&=-2s^3\alpha^2\iint_{(0,T)\times\mathcal{I}}(\partial_{x_1}\varphi_0)^3\partial_t\partial_{x_1}z\partial_tz=3s^3\alpha^2\iint_{(0,T)\times\mathcal{I}}(\partial_{x_1}\varphi_0)^2(\partial^2_{x_1}\varphi_0)|\partial_tz|^2.
\end{align*}
\paragraph{-- Computation of $I_{45}$ --}
\begin{align*}
 I_{45}&=-12(1+\beta)s^4\alpha\iint_{(0,T)\times\mathcal{I}}(\partial_{x_1}\varphi_0)^3(\partial^2_{x_1}\varphi_0)\partial_t\partial_{x_1}zz\\
 &=12(1+\beta)s^4\alpha\iint_{(0,T)\times\mathcal{I}}(\partial_{x_1}\varphi_0)^3(\partial^2_{x_1}\varphi_0)\partial_{x_1}z\partial_tz+\mathcal{R}_{45},
\end{align*}
with
$$\mathcal{R}_{45}=-6(1+\beta)s^4\alpha\iint_{(0,T)\times\mathcal{I}}\partial_{x_1}\partial_t[(\partial_{x_1}\varphi_0)^3(\partial^2_{x_1}\varphi_0)]|z|^2.$$
Moreover, from \eqref{poutre1}, \eqref{n2-2} and \eqref{n1-2} we deduce
\begin{align}
 |\mathcal{R}_{45}|&\lesssim \frac{\alpha}{s_0^3}s^7\mu^6\iint_{(0,T)\times\mathcal{I}}\xi_0^7|z|^2.\label{EstR45}
\end{align}
\paragraph{-- Computation of $I_{51}$ --}
\begin{align*}
I_{51}&=-4s^3\iint_{(0,T)\times\mathcal{I}}(\partial_{x_1}\varphi_0)^3\partial_{x_1}z\partial^2_tz=-6s^3\iint_{(0,T)\times\mathcal{I}}(\partial_{x_1}\varphi_0)^2(\partial^2_{x_1}\varphi_0)|\partial_tz|^2+\mathcal{R}_{51},
\end{align*}
with
$$\mathcal{R}_{51}=12s^3\iint_{(0,T)\times\mathcal{I}}(\partial_{x_1}\varphi_0)^2(\partial_t\partial_{x_1}\varphi_0)\partial_{x_1}z\partial_tz.$$
Moreover, from \eqref{poutre1} and \eqref{n1-2} we deduce
\begin{align}
|\mathcal{R}_{51}|&\lesssim \frac{1}{s_0}s^4\mu^3 \iint_{(0,T)\times\mathcal{I}}\xi_0^4|\partial_{x_1}z||\partial_tz|\lesssim \frac{1}{s_0}s^5\mu^3 \iint_{(0,T)\times\mathcal{I}}\xi_0^5|\partial_{x_1}z|^2+\frac{1}{s_0}s^3\mu^3 \iint_{(0,T)\times\mathcal{I}}\xi_0^3|\partial_tz|^2.\label{EstR51}
\end{align}
\paragraph{-- Computation of $I_{52}$ --}
\begin{align*}
I_{52}&=-4s\iint_{(0,T)\times\mathcal{I}}(\partial_{x_1}\varphi_0)\partial^3_{x_1}z\partial^2_tz\\
&=4s\iint_{(0,T)\times\mathcal{I}}(\partial_t\partial_{x_1}\varphi_0)\partial^3_{x_1}z\partial_tz+4s\iint_{(0,T)\times\mathcal{I}}(\partial_{x_1}\varphi_0)\partial_t\partial^3_{x_1}z\partial_tz\\
&=4s\iint_{(0,T)\times\mathcal{I}}(\partial_t\partial_{x_1}\varphi_0)\partial^3_{x_1}z\partial_tz-4s\iint_{(0,T)\times\mathcal{I}}(\partial^2_{x_1}\varphi_0)\partial_t\partial^2_{x_1}z\partial_tz-2s\iint_{(0,T)\times\mathcal{I}}(\partial_{x_1}\varphi_0)\partial_{x_1}|\partial_t\partial_{x_1}z|^2\\
&=4s\iint_{(0,T)\times\mathcal{I}}(\partial_t\partial_{x_1}\varphi_0)\partial^3_{x_1}z\partial_tz+4s\iint_{(0,T)\times\mathcal{I}}(\partial^3_{x_1}\varphi_0)\partial_t\partial_{x_1}z\partial_tz+6s\iint_{(0,T)\times\mathcal{I}}(\partial^2_{x_1}\varphi_0)|\partial_t\partial_{x_1}z|^2\\
&=6s\iint_{(0,T)\times\mathcal{I}}(\partial^2_{x_1}\varphi_0)|\partial_t\partial_{x_1}z|^2+\mathcal{R}_{52},
\end{align*}
with
\begin{align*}
\mathcal{R}_{52}
&=4s\iint_{(0,T)\times\mathcal{I}}(\partial_t\partial_{x_1}\varphi_0)\partial^3_{x_1}z\partial_tz-2s\iint_{(0,T)\times\mathcal{I}}(\partial^4_{x_1}\varphi_0)|\partial_tz|^2.
\end{align*}
Moreover, from \eqref{poutre1}, \eqref{n2-2} and \eqref{n1-2} we deduce
\begin{align}
|\mathcal{R}_{52}|
&\lesssim \frac{1}{s_0}s^2\mu \iint_{(0,T)\times\mathcal{I}}\xi_0^2|\partial^3_{x_1}z||\partial_tz|+\frac{1}{s_0^2}s^3\mu^4\iint_{(0,T)\times\mathcal{I}}\xi_0^3|\partial_tz|^2\notag \\
&\lesssim \frac{1}{s_0}s\mu \iint_{(0,T)\times\mathcal{I}}\xi_0|\partial^3_{x_1}z|^2~+\frac{1}{s_0}s^3\mu \iint_{(0,T)\times\mathcal{I}}\xi_0^3|\partial_tz|^2+\frac{1}{s_0^2}s^3\mu^4\iint_{(0,T)\times\mathcal{I}}\xi_0^3|\partial_tz|^2.\label{EstR52}
\end{align}
\paragraph{-- Computation of $I_{53}$ --}
$$I_{53}=-\alpha\iint_{(0,T)\times\mathcal{I}}\partial_t\partial^2_{x_1}z\partial^2_tz=\alpha\iint_{(0,T)\times\mathcal{I}}\partial_t\partial_{x_1}z\partial^2_t\partial_{x_1}z=\frac{\alpha}{2}\iint_{(0,T)\times\mathcal{I}}\partial_t|\partial_t\partial_{x_1}z|^2=0.$$
\paragraph{-- Computation of $I_{54}$ --}
\begin{align*}
I_{54}&=-s^2\alpha~\iint_{(0,T)\times\mathcal{I}}(\partial_{x_1}\varphi_0)^2\partial_tz\partial^2_tz=s^2\alpha\iint_{(0,T)\times\mathcal{I}}(\partial_{x_1}\varphi_0)(\partial_t\partial_{x_1}\varphi_0)|\partial_tz|^2=\mathcal{R}_{54}.
\end{align*}
Moreover, from \eqref{poutre1} and \eqref{n1-2} we deduce
\begin{align}
|\mathcal{R}_{54}| \lesssim \frac{\alpha}{s_0}s^3\mu^2\iint_{(0,T)\times\mathcal{I}}\xi_0^3|\partial_tz|^2.\label{EstR54}
\end{align}
\paragraph{-- Computation of $I_{55}$ --}
\begin{align*}
I_{55}&=-6(1+\beta)s^3\iint_{(0,T)\times\mathcal{I}}(\partial_{x_1}\varphi_0)^2(\partial^2_{x_1}\varphi_0)\partial^2_tzz=6(1+\beta)s^3\iint_{(0,T)\times\mathcal{I}}(\partial_{x_1}\varphi_0)^2(\partial^2_{x_1}\varphi_0)|\partial_tz|^2+\mathcal{R}_{55},
\end{align*}
with
\begin{align*}
\mathcal{R}_{55}&=-3(1+\beta)s^3\iint_{(0,T)\times\mathcal{I}}\partial_t^2[(\partial_{x_1}\varphi_0)^2(\partial^2_{x_1}\varphi_0)]|z|^2.
\end{align*} 
Moreover, from \eqref{poutre1}, \eqref{n2-2}, \eqref{n3-2} and \eqref{n1-2} we deduce
\begin{align}
|\mathcal{R}_{55}|\lesssim \frac{1}{s_0^4}s^7\mu^4 \iint_{(0,T)\times\mathcal{I}}\xi_0^7|z|^2\label{EstR55}.
\end{align} 
Finally, by combining the above expressions we obtain
\begin{align} \label{produit-croise}
&\iint_{(0,T)\times\mathcal{I}}\mathcal{M}_{11} z\cdot \mathcal{M}_{21} z=\sum\limits_{i,j=1}^{i=5,j=5}I_{ij}\\ 
 \notag
&=(8-6\beta)s^7\iint_{(0,T)\times\mathcal{I}} (\partial_{x_1}\varphi_0)^6(\partial^2_{x_1}\varphi_0)|z|^2+(66+36\beta)s^5\iint_{(0,T)\times\mathcal{I}}(\partial_{x_1}\varphi_0)^4(\partial_{x_1}^2\varphi_0)|\partial_{x_1}z|^2\\ \notag
&+(12-6\beta)s^3\iint_{(0,T)\times\mathcal{I}}(\partial_{x_1}\varphi_0)^2(\partial^2_{x_1}\varphi_0)|\partial^2_{x_1}z|^2+(3\alpha^2+6\beta)s^3\iint_{(0,T)\times\mathcal{I}}(\partial_{x_1}\varphi_0)^2(\partial^2_{x_1}\varphi_0)|\partial_tz|^2\\ \notag
&+2s\iint_{(0,T)\times\mathcal{I}}(\partial^2_{x_1}\varphi_0)|\partial^3_{x_1}z|^2+(\alpha^2+6)s\iint_{(0,T)\times\mathcal{I}}(\partial^2_{x_1}\varphi_0)|\partial_t\partial_{x_1}z|^2\\ \notag
&+(32+12\beta)s^4\alpha\iint_{(0,T)\times\mathcal{I}}(\partial_{x_1}\varphi_0)^3(\partial^2_{x_1}\varphi_0)\partial_tz\partial_{x_1}z+12s^2\alpha\iint_{(0,T)\times\mathcal{I}}(\partial_{x_1}\varphi_0)(\partial^2_{x_1}\varphi_0)\partial^2_{x_1}z\partial_t\partial_{x_1}z\\
&+ \mathcal{R}_2,\notag
\end{align}
where $\mathcal{R}_2$ is the sum of the $\mathcal{R}_{ij}$ that are estimated in \eqref{EstR12}-\eqref{EstR55} and that satisfies
\begin{align}\label{EstR3}
\left|\mathcal{R}_2\right|&\lesssim \left(\frac{1+\alpha}{s_0}\right)\iint_{(0,T)\times\mathcal{I}}\left(s^7\mu^8\xi^7_0|z|^2+s^5\mu^6\xi_0^5|\partial_{x_1}z|^2+s^3\mu^4\xi_0^3|\partial^2_{x_1}z|^2+s^3\mu^4\xi_0^3|\partial_tz|^2+s\mu^2\xi_0|\partial_{x_1}^3z|^2\right).
\end{align}
For later use, we rewrite \eqref{produit-croise} as 
\begin{multline} \label{produit-croise-I_l}
\iint_{(0,T)\times\mathcal{I}}\mathcal{M}_{11} z\cdot \mathcal{M}_{21} z=(8-6\beta)I_1+(66+36\beta) I_2+(12-6\beta) I_3\\
+(3\alpha^2+6\beta) I_4+2 I_5+(\alpha^2+6)I_6+(32+12\beta)\alpha I_7+ 12 \alpha I_8+\mathcal{R}_2,
\end{multline}
where $I_\ell$, $\ell = 1, \ldots ,8$, denote the integral terms.

\subsection{Estimate of the cross product from below for arbitrary \texorpdfstring{$\alpha\geq 0$}{alpha} -- Proof of Theorem \ref{poutre2}} \label{section_proof_CarlEstpoutre_alphanonneg}
In what follows, the value of the free parameter $\beta$ plays no role, so  for simplicity we fix $\beta=1$. Then \eqref{produit-croise-I_l} gives
\begin{equation}\label{Eq11062025}
\iint_{(0,T)\times\mathcal{I}}\mathcal{M}_{11} z\cdot \mathcal{M}_{21} z= 2I_1+ 102I_2+ 6I_3+ (3\alpha^2+6) I_4+2 I_5+ (\alpha^2+6) I_6+44 \alpha I_7+12 \alpha I_8+\mathcal{R}_2.
\end{equation}
Using the fact that $z=e^{s\varphi_0}\eta$ we observe that
\begin{equation}\label{exp}
\partial_tz=s\partial_t\varphi_0~z+e^{s\varphi_0}\partial_t \eta.
\end{equation}
Hence, we have
\begin{multline*}
44 \alpha I_7=44 \alpha s^4\iint_{(0,T)\times\mathcal{I}}(\partial_{x_1}\varphi_0)^3(\partial^2_{x_1}\varphi_0)\partial_tz\partial_{x_1}z \\
=44 \alpha s^4\iint_{(0,T)\times\mathcal{I}}(\partial_{x_1}\varphi_0)^3(\partial^2_{x_1}\varphi_0)e^{s\varphi_0}\partial_t \eta \partial_{x_1}z+44\alpha s^5\iint_{(0,T)\times\mathcal{I}}(\partial_{x_1}\varphi_0)^3(\partial^2_{x_1}\varphi_0)(\partial_t\varphi_0)z\partial_{x_1}z\\
=44 \alpha s^4\iint_{(0,T)\times\mathcal{I}}(\partial_{x_1}\varphi_0)^3(\partial^2_{x_1}\varphi_0)(e^{s\varphi_0}\partial_t \eta)\partial_{x_1}z+\mathcal{R}_3.
\end{multline*}
with 
$$
\mathcal{R}_3=22 \alpha s^5\iint_{(0,T)\times\mathcal{I}}\partial_{x_1}[(\partial_{x_1}\varphi_0)^3(\partial^2_{x_1}\varphi_0)(\partial_t\varphi_0)]|z|^2.
$$
Moreover, we have
\begin{multline*}
12 \alpha I_8=12\alpha s^2\iint_{(0,T)\times\mathcal{I}}(\partial_{x_1}\varphi_0)(\partial^2_{x_1}\varphi_0)\partial^2_{x_1}z\partial_t\partial_{x_1}z\\
=- 12\alpha s^2\iint_{(0,T)\times\mathcal{I}}(\partial_{x_1}\varphi_0)(\partial^2_{x_1}\varphi_0)\partial^3_{x_1}z\partial_tz- 12\alpha s^2\iint_{(0,T)\times\mathcal{I}}\partial_{x_1}[(\partial_{x_1}\varphi_0)(\partial^2_{x_1}\varphi_0)]\partial^2_{x_1}z\partial_tz\\
=-12 \alpha s^2\iint_{(0,T)\times\mathcal{I}}(\partial_{x_1}\varphi_0)(\partial^2_{x_1}\varphi_0)\partial^3_{x_1}z(e^{s\varphi_0}\partial_t \eta)+\mathcal{R}_4,
\end{multline*}
with 
$$
\mathcal{R}_4=-12 \alpha s^3\iint_{(0,T)\times\mathcal{I}}(\partial_{x_1}\varphi_0)(\partial^2_{x_1}\varphi_0)(\partial_t\varphi_0)\partial^3_{x_1}z z- 12 \alpha s^2\iint_{(0,T)\times\mathcal{I}}\partial_{x_1}[(\partial_{x_1}\varphi_0)(\partial^2_{x_1}\varphi_0)]\partial^2_{x_1}z\partial_tz.
$$
Note that from \eqref{poutre1}, \eqref{n2-2} and \eqref{n1-2} we deduce
\begin{multline}\label{EstR5}
|\mathcal{R}_3|+|\mathcal{R}_4|\lesssim  \frac{\alpha}{s_0}\iint_{(0,T)\times\mathcal{I}}(s^7\mu^6 \xi_0^7 |z|^2+s^4\mu^3 \xi_0^4 |\partial^3_{x_1}z|~|z|+s^3\mu^4 \xi_0^3 |\partial^2_{x_1}z|~|\partial_tz|)\\
\lesssim \frac{\alpha}{s_0}\iint_{(0,T)\times\mathcal{I}} s^7\mu^6 \xi_0^7|z|^2+s \xi_0 |\partial^3_{x_1}z|^2+s^3\mu^4 \xi_0^3 (|\partial^2_{x_1}z|^2+|\partial_tz|^2)
\end{multline}

Moreover, from \eqref{poutre2-0} and Cauchy Schwarz inequality we deduce
\begin{multline*}
\left|12\alpha s^2\iint_{(0,T)\times\mathcal{I}\backslash J_0}(\partial_{x_1}\varphi_0)(\partial^2_{x_1}\varphi_0)(e^{s\varphi_0}\partial_t \eta)\partial^3_{x_1}z\right|+\left|44 \alpha s^4\iint_{(0,T)\times\mathcal{I}\backslash J_0}(\partial_{x_1}\varphi_0)^3(\partial^2_{x_1}\varphi_0)(e^{s\varphi_0}\partial_t \eta)\partial_{x_1}z\right|\\
\leq 6\varepsilon s\iint_{(0,T)\times\mathcal{I}\backslash J_0}(\partial^2_{x_1}\varphi_0)|\partial^3_{x_1}z|^2+22\varepsilon s^5\iint_{(0,T)\times\mathcal{I}\backslash J_0}(\partial^2_{x_1}\varphi_0)(\partial_{x_1}\varphi_0)^4|\partial_{x_1}z|^2\\+\frac{28}{\varepsilon}\alpha^2 s^3\iint_{(0,T)\times\mathcal{I}} (\partial_{x_1}\varphi_0)^2|\partial^2_{x_1}\varphi_0|e^{2s\varphi_0}|\partial_t \eta|^2\\
=6\varepsilon I_5+22\varepsilon I_2-6\varepsilon s\iint_{(0,T)\times J_0}(\partial^2_{x_1}\varphi_0)|\partial^3_{x_1}z|^2-22\varepsilon s^5\iint_{(0,T)\times J_0}(\partial^2_{x_1}\varphi_0)(\partial_{x_1}\varphi_0)^4|\partial_{x_1}z|^2\\
+\frac{28}{\varepsilon}\alpha^2 s^3\iint_{(0,T)\times\mathcal{I}}(\partial_{x_1}\varphi_0)^2|\partial^2_{x_1}\varphi_0| e^{2s\varphi_0}|\partial_t \eta|^2
\end{multline*}
and then, with \eqref{Eq11062025}, by choosing $\varepsilon$ small enough we get
\begin{multline*}
\iint_{(0,T)\times\mathcal{I}}\mathcal{M}_{11} z\cdot \mathcal{M}_{21} z+s\iint_{(0,T)\times J_0}|\partial^2_{x_1}\varphi_0||\partial^3_{x_1}z|^2+ s^5\iint_{(0,T)\times J_0}|\partial^2_{x_1}\varphi_0|(\partial_{x_1}\varphi_0)^4|\partial_{x_1}z|^2\\
+\alpha^2 s^3\iint_{(0,T)\times\mathcal{I}}|\partial^2_{x_1}\varphi_0| (\partial_{x_1}\varphi_0)^2~e^{2s\varphi_0}|(\partial_t \eta)|^2-\mathcal{R}_2-\mathcal{R}_3-\mathcal{R}_4\\
\gtrsim (I_1+ I_2+ I_3+ (1+\alpha^2)I_4+I_5+ (1+\alpha^2)I_6).
\end{multline*}
Thus, with \eqref{ineqalite12}, using \eqref{poutre2-0}, \eqref{poutre1} and \eqref{EstR0}, \eqref{EstR1}, \eqref{EstR3}, \eqref{EstR5} we find that for $\lambda_0\geq 1$ and $\widehat{s}_0=\frac{s_0}{1+\alpha}\geq 1 $ large enough, for all $\lambda\geq\lambda_0$ and all $s\geq \widehat{s}_0(1+\alpha)(T^k+T^{k-1})=s_0(T^k+T^{k-1})$, 

\begin{multline}\label{midle-estimate}
\|\mathcal{M}_1z\|^2_{L^2((0,T)\times\mathcal{I})}+\|\mathcal{M}_2z\|^2_{L^2((0,T)\times\mathcal{I})}\\
+ \iint_{(0,T)\times\mathcal{I}}\Big(s^7\mu^8\xi^7_0~|z|^2+s^5\mu^6\xi_0^5|\partial_{x_1}z|^2+s^3\mu^4\xi_0^3(|\partial^2_{x_1}z|^2+(1+\alpha^2)|\partial_tz|^2)  +s\mu^2\xi_0(|\partial^3_{x_1}z|^2+(1+\alpha^2)|\partial_t\partial_{x_1}z|^2)\Big)\\
\lesssim \Big(\iint_{(0,T)\times\mathcal{I}}e^{2s\varphi_0}|f_\eta|^2+\alpha^2s^3\mu^4\iint_{(0,T)\times\mathcal{I}}\xi_0^3~e^{2s\varphi_0}|\partial_t \eta|^2\\
+\iint_{(0,T)\times J_0}\Big(s^7\mu^8\xi^7_0~|z|^2+s^5\mu^6\xi_0^5|\partial_{x_1}z|^2+s^3\mu^4\xi_0^3(|\partial^2_{x_1}z|^2+(1+\alpha^2)|\partial_tz|^2)+s\mu^2\xi_0(|\partial^3_{x_1}z|^2+(1+\alpha^2)|\partial_t\partial_{x_1}z|^2)\Big)
\end{multline}
From inequality \eqref{midle-estimate}, we directly obtain the Theorem \ref{poutre2}.
\subsection{The case \texorpdfstring{$\alpha > 0$ } -- Proof of Theorem \ref{poutre}}\label{Sect3}
From now on, we assume $\alpha$ to be positive. We will now prove our main Carleman estimates, that is Theorem \ref{poutre}.
\subsubsection{Estimate of the local terms in \texorpdfstring{$\partial_{x_1}z$}{der1} and \texorpdfstring{$\partial_{x_1}^2z$}{der2}}\label{SectLocalTerm1}

Let $J_0\Subset J_1\Subset J$ and $\chi_1\in C_0^\infty(J_1)$ such that $\chi_1\equiv 1$ in $J_0$ and let $\varepsilon_1>0$. By integrating by parts and by using \eqref{poutre1}, \eqref{n1-2} we get 
\begin{multline*}
    s^3\mu^4 \iint_{(0,T)\times J_0}\xi_0^3|\partial_{x_1}^2z|^2\leq s^3\mu^4 \iint_{(0,T)\times J_1}\chi_1\xi_0^3|\partial_{x_1}^2 z|^2= - \iint_{(0,T)\times J_1}\partial_{x_1}(s^3\mu^4\chi_1\xi_0^3\partial_{x_1}^2 z) \partial_{x_1}z\\
    \leq C_1 \iint_{(0,T)\times J_1}(s^4\mu^5\xi_0^4|\partial_{x_1}^2 z||\partial_{x_1} z|+s^3\mu^4\xi_0^3|\partial_{x_1}^3 z||\partial_{x_1} z|) \\
    \leq C_1 \varepsilon_1 \iint_{(0,T)\times \mathcal{I}}(s\mu^2\xi_0|\partial_{x_1}^3 z|^2
    +s^3\mu^4\xi_0^3|\partial_{x_1}^2 z|^2)+\frac{C_1}{\varepsilon_1}\iint_{(0,T)\times J_1}s^5\mu^6\xi_0^5|\partial_{x_1} z|^2.
\end{multline*}
where $C_1>0$ is independent on $\varepsilon_1$. Then by choosing $\varepsilon_1>0$ small enough we can remove the local term $s^3\mu^4 \iint_{(0,T)\times J_0}\xi_0^3|\partial_{x_1}^2z|^2$ and replace $J_0$ by $J_1$ in the equality \eqref{midle-estimate}, up to a modification of the constant $C$ there. 

Next, let $J_1\Subset J_2 \Subset J$ and let $\chi_2\in C_0^\infty(J_2)$ such that $\chi_2\equiv 1$ in $J_1$ and let $\varepsilon_2>0$. By integrating by parts and by using \eqref{poutre1}, \eqref{n1-2} we get 
\begin{multline*}
    s^5\mu^6 \iint_{(0,T)\times J_1}\xi_0^5|\partial_{x_1}z|^2\leq s^5\mu^6 \iint_{(0,T)\times J}\chi_2\xi_0^5|\partial_{x_1}z|^2= - \iint_{(0,T)\times J_2}\partial_{x_1}(s^5\mu^6\chi_2\xi_0^5\partial_{x_1}z) z\\
    \leq C_2\iint_{(0,T)\times J_2}(s^5\mu^6\xi_0^5|\partial_{x_1}^2 z||z|+s^6\mu^7\xi_0^{6}|\partial_{x_1}z||z|) \\
    \leq C_2 \varepsilon_2 \iint_{(0,T)\times\mathcal{I}}(s^3\mu^4\xi_0^3|\partial_{x_1}^2 z|^2
    +s^5\mu^6\xi_0^5|\partial_{x_1}z|^2)+\frac{C_2}{\varepsilon_2} \iint_{(0,T)\times J_2}s^7\mu^8\xi_0^7|z|^2,
\end{multline*}
where $C_2>0$ is independent on $\varepsilon_2$. Then, similarly as above, by choosing $\varepsilon_2>0$ small enough we can remove the local term $s^5\mu^6 \iint_{(0,T)\times J_1}\xi_0^5|\partial_{x_1}z|^2$ and finally obtain
\begin{multline}\label{midle-estimate2}
\|\mathcal{M}_1z\|^2_{L^2((0,T)\times\mathcal{I})}+\|\mathcal{M}_2z\|^2_{L^2((0,T)\times\mathcal{I})}\\
+ \iint_{(0,T)\times\mathcal{I}}\Big(s^7\mu^8\xi^7_0~|z|^2+s^5\mu^6\xi_0^5|\partial_{x_1}z|^2+s^3\mu^4\xi_0^3(|\partial^2_{x_1}z|^2+(1+\alpha^2)|\partial_tz|^2)  +s\mu^2\xi_0(|\partial^3_{x_1}z|^2+(1+\alpha^2)|\partial_t\partial_{x_1}z|^2)\Big)\\
\leq C\Big(\iint_{(0,T)\times\mathcal{I}}e^{2s\varphi_0}|f_\eta|^2+\alpha^2s^3\mu^4\iint_{(0,T)\times\mathcal{I}}\xi_0^3~e^{2s\varphi_0}|\partial_t \eta|^2\\
+\iint_{(0,T)\times J_2}\Big(s^7\mu^8\xi^7_0~|z|^2+(1+\alpha^2)s^3\mu^4\xi_0^3 |\partial_tz|^2+s\mu^2\xi_0(|\partial^3_{x_1}z|^2+(1+\alpha^2)|\partial_t\partial_{x_1}z|^2\Big)
\end{multline}
for some $C>0$ that can be chosen independent on $\alpha$.

\subsubsection{A Carleman estimate with high order terms in  \texorpdfstring{$\partial^2_tz$}{dt2}, \texorpdfstring{$\partial_t\partial^2_{x_1}z$}{dtdx2} and \texorpdfstring{$\partial^4_{x_1}z$}{dx4}}
Now our goal is to estimate $\partial^2_tz,$ $\alpha\partial_t\partial^2_{x_1}z$ and $\partial^4_{x_1}z.$ 

First, from \eqref{DefM1M2} and \eqref{DefM21} we have
\begin{align*}
\alpha\partial_t\partial^2_{x_1}z&=-4s^3(\partial_{x_1}\varphi_0)^3\partial_{x_1}z-4s\partial_{x_1}\varphi_0\partial^3_{x_1}z-s^2\alpha(\partial_{x_1}\varphi_0)^2\partial_tz-12s^3(\partial_{x_1}\varphi_0)^2\partial^2_{x_1}\varphi_0z+\mathcal{M}_{22}-\mathcal{M}_2z,
\end{align*}
and with \eqref{poutre1} and \eqref{n1-2} we deduce
\begin{multline}\label{Eq14-0325-1}
\frac{\alpha^2}{s}\iint_{(0,T)\times\mathcal{I}}\frac1\xi_0|\partial_t\partial^2_{x_1}z|^2\lesssim \iint_{(0,T)\times\mathcal{I}}(s^7\mu^8 \xi_0^7|z|^2+s^5\mu^6 \xi_0^5|\partial_{x_1}z|^2+\alpha^2 s^3\mu^4 \xi_0^3|\partial_tz|^2+s\mu^2 \xi_0~|\partial^3_{x_1}z|^2)\\+\iint_{(0,T)\times\mathcal{I}}\frac{1}{s\xi_0}(|\mathcal{M}_{22}|^2+|\mathcal{M}_{2}|^2).
\end{multline}
Moreover, from \eqref{DefM22} and from \eqref{poutre1} and \eqref{n2-2} we deduce
\begin{equation}\label{EstM22}
\frac1s\iint_{(0,T)\times\mathcal{I}}\frac1\xi_0~|\mathcal{M}_{22}|^2\\
\lesssim \left(\frac{1+\alpha^2}{s_0^2}\right)\iint_{(0,T)\times\mathcal{I}}\left(s^5\mu^6\xi_0^5|\partial_{x_1}z|^2+s^3\mu^4\xi_0^3|\partial_tz|^2\right)
\end{equation}
and from \eqref{n1-2} we deduce
\begin{equation}\label{EstM2}
\iint_{(0,T)\times\mathcal{I}}\frac{1}{s\xi_0}~|\mathcal{M}_{2}|^2\lesssim \frac{1}{s_0}\|\mathcal{M}_2z\|^2_{L^2((0,T)\times\mathcal{I})}.
\end{equation}
Then by combining \eqref{midle-estimate2} with \eqref{Eq14-0325-1}, \eqref{EstM22} and \eqref{EstM2} we obtain for $\widehat{s}_0=\frac{s_0}{1+\alpha}\geq 1$,
\begin{multline}\label{Eqs15-03-2025-0}
\alpha^2\iint_{(0,T)\times\mathcal{I}}\frac{1}{s\xi_0}|\partial_t\partial^2_{x_1}z|^2\\
+ \iint_{(0,T)\times\mathcal{I}} (s^7\mu^8\xi_0^7~|z|^2+s^5\mu^6\xi_0^6~|\partial_{x_1}z|^2+s^3\mu^4\xi_0^3~(|\partial^2_{x_1}z|^2+(1+\alpha^2)|\partial_{t}z|^2)+s\mu^2\xi_0~((1+\alpha^2)|\partial_t\partial_{x_1}z|^2+|\partial_{x_1}^3z|^2)\\
\lesssim \Big(\iint_{(0,T)\times\mathcal{I}}e^{2s\varphi_0}|f_\eta|^2+\alpha^2s^3\mu^4\iint_{(0,T)\times\mathcal{I}}\xi_0^3~e^{2s\varphi_0}|\partial_t \eta|^2\\
+\iint_{(0,T)\times J_2}\Big(s^7\mu^8\xi^7_0~|z|^2+s^3\mu^4\xi_0^3 (1+\alpha^2)|\partial_tz|^2+s\mu^2\xi_0(|\partial^3_{x_1}z|^2+(1+\alpha^2)|\partial_t\partial_{x_1}z|^2\Big).
\end{multline}

Next, by observing that
$$
\frac1s\iint_{(0,T)\times\mathcal{I}}\frac1\xi_0|\partial^4_{x_1}z+\partial^2_tz|^2=\frac1s\iint_{(0,T)\times\mathcal{I}}\frac1\xi_0(|\partial^4_{x_1}z|^2+|\partial^2_tz|^2+2\partial^4_{x_1}z\partial^2_tz)
$$
and with 
\begin{multline*}
\frac2s\iint_{(0,T)\times\mathcal{I}}\frac1\xi_0\partial^4_{x_1}z\partial^2_tz=-\frac2s\iint_{(0,T)\times\mathcal{I}}\frac1\xi_0\partial^4_{x_1}\partial_tz\partial_tz-\frac2s\iint_{(0,T)\times\mathcal{I}}\partial_t(\xi_0^{-1})\partial^4_{x_1}z\partial_tz \\
=\frac2s\iint_{(0,T)\times\mathcal{I}}\frac1\xi_0\partial^3_{x_1}\partial_tz\partial_ {x_1}\partial_tz+\frac2s\iint_{(0,T)\times\mathcal{I}}\partial_{x_1}(\xi_0^{-1})\partial^3_{x_1}\partial_tz\partial_tz\\
+\frac2s\iint_{(0,T)\times\mathcal{I}}\partial_t(\xi_0^{-1})\partial^3_{x_1}z \partial_{x_1} \partial_tz+\frac2s\iint_{(0,T)\times\mathcal{I}}\partial_{x_1}\partial_t(\xi_0^{-1})\partial^3_{x_1}z\partial_tz \\
=-\frac2s\iint_{(0,T)\times\mathcal{I}}\frac1\xi_0|\partial^2_{x_1}\partial_tz|^2-\frac4s\iint_{(0,T)\times\mathcal{I}}\partial_{x_1}(\xi_0^{-1})\partial^2_{x_1}\partial_tz\partial_ {x_1}\partial_tz\\
-\frac2s\iint_{(0,T)\times\mathcal{I}}\partial_{x_1}^2(\xi_0^{-1})\partial_{x_1}^2\partial_tz\partial_tz
-\frac2s\iint_{(0,T)\times\mathcal{I}}\partial_t(\xi_0^{-1})\partial^2_{x_1}\partial_tz \partial_{x_1}^2z\\
-\frac4s\iint_{(0,T)\times\mathcal{I}}\partial_{x_1}\partial_t(\xi_0^{-1})\partial^2_{x_1}z \partial_{x_1}\partial_tz 
-\frac2s\iint_{(0,T)\times\mathcal{I}}\partial^2_{x_1}\partial_t(\xi_0^{-1})\partial^2_{x_1}z \partial_tz,
\end{multline*}
we deduce
\begin{equation}\label{EqR6}
\frac{\alpha^2}{s}\iint_{(0,T)\times\mathcal{I}}\frac1\xi_0(|\partial^4_{x_1}z|^2+|\partial^2_tz|^2)=\frac{\alpha^2}{s}\iint_{(0,T)\times\mathcal{I}}\frac1\xi_0|\partial^4_{x_1}z+\partial^2_tz|^2+\frac{2\alpha^2}{s}\iint_{(0,T)\times\mathcal{I}}\frac1\xi_0|\partial^2_{x_1}\partial_tz|^2+\mathcal{R}_6,
\end{equation}
where
\begin{multline*}
    \mathcal{R}_6=-\frac{4\alpha^2}{s}\iint_{(0,T)\times\mathcal{I}}\partial_{x_1}^2(\xi_0^{-1})|\partial_{x_1}\partial_tz|^2
+\frac{\alpha^2}{s}\iint_{(0,T)\times\mathcal{I}}\partial_{x_1}^4(\xi_0^{-1})|\partial_tz|^2-\frac{\alpha^2}{s}\iint_{(0,T)\times\mathcal{I}}\partial_t^2(\xi_0^{-1})|\partial^2_{x_1}z|^2\\
+\frac{4\alpha^2}{s}\iint_{(0,T)\times\mathcal{I}}\partial_{x_1}\partial_t(\xi_0^{-1})\partial_{x_1}\partial_tz \partial_{x_1}^2z + \frac{2\alpha^2}{s}\iint_{(0,T)\times\mathcal{I}}\partial^2_{x_1}\partial_t(\xi_0^{-1})\partial^2_{x_1}z \partial_tz.
\end{multline*}

Moreover, from \eqref{poutre1}, \eqref{n2-2}, \eqref{n3-2} and \eqref{n1-2} we deduce
\begin{equation}\label{EstR6}
\left|\mathcal{R}_6\right|\lesssim \frac{\alpha^2}{s_0^2}\iint_{(0,T)\times\mathcal{I}}\big(s^3\mu^4\xi_0^3|\partial^2_{x_1}z|^2                                                                                         +s^3\mu^4\xi_0^3|\partial_tz|^2+s\mu^2\xi_0|\partial_{x_1}\partial_t z|^2\big).
\end{equation}

Thus, from \eqref{DefM1M2} and \eqref{DefM11} we deduce
\begin{align*}
\alpha(\partial^2_tz+\partial^4_{x_1}z)&=-\alpha s^4(\partial_{x_1}\varphi_0)^4z-6\alpha s^2(\partial_{x_1}\varphi_0)^2\partial^2_{x_1}z-2\alpha^2 s(\partial_{x_1}\varphi_0)\partial_t\partial_{x_1}z+\alpha \mathcal{M}_1z-\alpha \mathcal{M}_{12}z,
\end{align*}
and with \eqref{poutre1} we deduce
\begin{multline}\label{Eq14-0325-2}
\frac{\alpha^2}{s}\iint_{(0,T)\times\mathcal{I}}\frac1\xi_0|\partial^4_{x_1}z+\partial^2_tz|^2\lesssim \alpha^2\left(\iint_{(0,T)\times\mathcal{I}} s^7\mu^8\xi_0^7~|z|^2+ s^3\mu^4\xi_0^3~|\partial^2_{x_1}z|^2+\alpha^2 s\mu^2\xi_0~|\partial_t\partial_{x_1}z|^2\right)\\
+ \alpha^2\iint_{(0,T)\times\mathcal{I}}\frac{1}{s\xi_0}~(|\mathcal{M}_{12}|^2+|\mathcal{M}_{1}|^2).
\end{multline}

Moreover, from \eqref{DefM12} and from \eqref{poutre1} and \eqref{n2-2} we deduce
\begin{equation}\label{EstM12}
\frac{\alpha^2}{s}\iint_{(0,T)\times\mathcal{I}}\frac1\xi_0~|\mathcal{M}_{12}|^2\\
\lesssim \left(\frac{\alpha^2}{s_0^4}+\frac{\alpha^4}{s_0^2}\right)\iint_{(0,T)\times\mathcal{I}}\left(s^7\mu^8\xi^7_0|z|^2+s^3\mu^4\xi_0^3|\partial^2_{x_1}z|^2\right).
\end{equation}
Moreover, from \eqref{n1-2} we deduce
\begin{equation}\label{EstM1Final}
\alpha^2\iint_{(0,T)\times\mathcal{I}}\frac{1}{s\xi_0}~|\mathcal{M}_{1}|^2\lesssim \frac{\alpha^2}{s_0}\|\mathcal{M}_1z\|^2_{L^2((0,T)\times\mathcal{I})}.
\end{equation}

Then by combining  \eqref{EqR6}, \eqref{EstR6}, \eqref{Eq14-0325-2},  \eqref{EstM12}, \eqref{EstM1Final}, \eqref{Eq14-0325-1}, \eqref{EstM22} and \eqref{EstM2} with $\widehat{s}_0=\frac{s_0}{1+\alpha}\geq 1$ we deduce
\begin{multline*}
\iint_{(0,T)\times\mathcal{I}}\frac{\alpha^2}{s\xi_0}(|\partial^4_{x_1}z|^2+|\partial^2_tz|^2)\lesssim (1+\alpha^2)\iint_{(0,T)\times\mathcal{I}}(|\mathcal{M}_{1}|^2+|\mathcal{M}_{2}|^2)\\
+ (1+\alpha^2)\iint_{(0,T)\times\mathcal{I}} (s^7\mu^8\xi_0^7~|z|^2+s^5\mu^6\xi_0^6~|\partial_{x_1}z|^2+s^3\mu^4\xi_0^3~(|\partial^2_{x_1}z|^2+(1+\alpha^2)|\partial^2_{t}z|^2)+s\mu^2\xi_0~(\alpha^2|\partial_t\partial_{x_1}z|^2+|\partial_{x_1}^3z|^2).
\end{multline*}

Then with \eqref{midle-estimate2} and \eqref{Eqs15-03-2025-0} it follows that
\begin{multline}\label{Eqs15-03-2025}
\frac{\alpha^2}{(1+\alpha^2)}\iint_{(0,T)\times\mathcal{I}}\frac{1}{s\xi_0}(|\partial^4_{x_1}z|^2+|\partial^2_tz|^2)+\alpha^2\iint_{(0,T)\times\mathcal{I}}\frac{1}{s\xi_0}|\partial_t\partial^2_{x_1}z|^2\\
+ \iint_{(0,T)\times\mathcal{I}} (s^7\mu^8\xi_0^7~|z|^2+s^5\mu^6\xi_0^6~|\partial_{x_1}z|^2+s^3\mu^4\xi_0^3~(|\partial^2_{x_1}z|^2+(1+\alpha^2)|\partial_{t}z|^2)+s\mu^2\xi_0~((1+\alpha^2)|\partial_t\partial_{x_1}z|^2+|\partial_{x_1}^3z|^2)\\
\lesssim \Big(\iint_{(0,T)\times\mathcal{I}}e^{2s\varphi_0}|f_\eta|^2+\alpha^2s^3\mu^4\iint_{(0,T)\times\mathcal{I}}\xi_0^3~e^{2s\varphi_0}|\partial_t \eta|^2\\
+\iint_{(0,T)\times J_2}\Big(s^7\mu^8\xi^7_0~|z|^2+s^3\mu^4\xi_0^3 (1+\alpha^2)|\partial_tz|^2+s\mu^2\xi_0(|\partial^3_{x_1}z|^2+(1+\alpha^2)|\partial_t\partial_{x_1}z|^2\Big).
\end{multline}
\subsubsection{Estimate of the local terms in \texorpdfstring{$\partial_{x_1}^3z$}{dx3} and \texorpdfstring{$\partial_t z$}{dtt1} and \texorpdfstring{$\partial_t \partial_{x_1}z$}{dertx}}
Let $J_2\Subset J_3\Subset J$ and $\chi_3\in C_0^\infty(J_3)$ such that $\chi_3\equiv 1$ in $J_2$ and let $\varepsilon_3>0$. By integrating by parts and by using \eqref{poutre1}, \eqref{n1-2} we get
\begin{multline*}
    s\mu^2 \iint_{(0,T)\times J_2}\xi_0(|\partial^3_{x_1}z|^2+(1+\alpha^2)|\partial_t\partial_{x_1}z|^2)\leq s\mu^2 \iint_{(0,T)\times J_3}\chi_3\xi_0(|\partial^3_{x_1}z|^2+(1+\alpha^2)|\partial_t\partial_{x_1}z|^2)\\
    = - \iint_{(0,T)\times J_3}\partial_{x_1}(s\mu^2\chi_3\xi_0\partial_{x_1}^3 z) \partial_{x_1}^2 z- (1+\alpha^2)\iint_{(0,T)\times J_3}\partial_{x_1}(s\mu^2\chi_3\xi_0\partial_t \partial_{x_1} z) \partial_{t} z\\
    \leq C_3 \iint_{(0,T)\times J_3}(s\mu^2\xi_0(|\partial_{x_1}^4 z||\partial_{x_1}^2 z|+(1+\alpha^2)|\partial_{t}\partial_{x_1}^2 z||\partial_{t} z|)+s\mu^3\xi_0(|\partial_{x_1}^3 z||\partial_{x_1}^2 z|+(1+\alpha^2)|\partial_{t}\partial_{x_1} z||\partial_{t} z|)) \\
    \leq C_3 \varepsilon_3 \iint_{(0,T)\times \mathcal{I}}\frac{1}{s\xi_0}\left(\frac{\alpha^2}{1+\alpha^2}|\partial_{x_1}^4 z|^2+\alpha^2|\partial_{t}\partial_{x_1}^2 z|^2\right)+s\mu^2\xi_0(|\partial_{x_1}^3 z|^2+(1+\alpha^2)|\partial_{t}\partial_{x_1} z|^2))\\
    +\left(\frac{1+\alpha^2}{\alpha^2}\right)\frac{C_3}{\varepsilon_3}\iint_{(0,T)\times J_3}s^3\mu^4\xi_0^3(|\partial_{x_1}^2 z|^2+(1+\alpha^2)|\partial_{t} z|^2),
\end{multline*}
where $C_3>0$ is independent on $\varepsilon_3$. Then by choosing $\varepsilon_3>0$ small enough, in \eqref{Eqs15-03-2025} we can replace the local term 
$$
\iint_{(0,T)\times J_2}\Big(s\mu^2\xi_0(|\partial^3_{x_1}z|^2+(1+\alpha^2)|\partial_t\partial_{x_1}z|^2\Big)
$$
by
$$
\left(\frac{1+\alpha^2}{\alpha^2}\right)\iint_{(0,T)\times J_3}s^3\mu^4\xi_0^3(|\partial_{x_1}^2 z|^2+(1+\alpha^2)|\partial_{t} z|^2).
$$

Next, to remove the local term $\iint_{(0,T)\times J_3}s^3\mu^4\xi^3_0|\partial_tz|^2$ we introduce $J_3\Subset J_4\Subset J$ and $\chi_4\in C_0^\infty(J_4)$ such that $\chi_4\equiv 1$ in $J_3$. Let $\varepsilon_4>0$. 
By integrating by parts and by using \eqref{poutre1}, \eqref{n2-2}, \eqref{n1-2} we get 
\begin{multline*}
    \frac{(1+\alpha^2)^2}{\alpha^2} s^3\mu^4 \iint_{(0,T)\times J_3}\xi_0^3|\partial_t z|^2\leq \frac{(1+\alpha^2)^2}{\alpha^2} s^3\mu^4 \iint_{(0,T)\times J_4}\chi_4\xi^3_0 |\partial_tz|^2\\
    =
     - \frac{(1+\alpha^2)^2}{\alpha^2} \iint_{(0,T)\times J_4}\partial_t(s^3\mu^4\chi_4\xi_0^3\partial_t z) z\\
    \leq \frac{(1+\alpha^2)^2}{\alpha^2} C_4 \iint_{(0,T)\times J_4}(s^3\mu^4\xi_0^3|\partial_t^2 z||z|+s^4\mu^4\xi_0^4|\partial_t z||z|) \\
    \leq C_4 \varepsilon_4 \iint_{(0,T)\times \mathcal{I}}\left(\frac{\alpha^2}{1+\alpha^2}\frac{1}{s\xi_0}|\partial_t^2 z|^2+(1+\alpha^2)s^3\mu^4\xi_0^4|\partial_tz|^2\right)+\left(\frac{(1+\alpha^{5})^2}{\alpha^6}\right)\frac{C_4}{\varepsilon_4}\iint_{(0,T)\times J_4}s^7\mu^8\xi_0^7|z|^2.
\end{multline*}
where $C_4>0$ is independent on $\varepsilon_4$. Then by choosing $\varepsilon_4>0$ small enough we can remove the desired local term.

Finally, to remove the local term $\frac{1+\alpha^2}{\alpha^2}\iint_{(0,T)\times J_3}s^3\mu^4\xi^3_0|\partial^2_{x_1}z|^2$ we proceed as in Section \ref{SectLocalTerm1}. Then finally we obtain:

\begin{multline}\label{Eqs15-03-2025-Final}
\frac{\alpha^2}{(1+\alpha^2)}\iint_{(0,T)\times\mathcal{I}}\frac{1}{s\xi_0}(|\partial^4_{x_1}z|^2+|\partial^2_tz|^2)+\alpha^2\iint_{(0,T)\times\mathcal{I}}\frac{1}{s\xi_0}|\partial_t\partial^2_{x_1}z|^2\\
+ \iint_{(0,T)\times\mathcal{I}} (s^7\mu^8\xi_0^7~|z|^2+s^5\mu^6\xi_0^6~|\partial_{x_1}z|^2+s^3\mu^4\xi_0^3~(|\partial^2_{x_1}z|^2+(1+\alpha^2)|\partial_{t}z|^2)+s\mu^2\xi_0~((1+\alpha^2)|\partial_t\partial_{x_1}z|^2+|\partial_{x_1}^3z|^2)\\
\lesssim \Big(\iint_{(0,T)\times\mathcal{I}}e^{2s\varphi_0}|f_\eta|^2+\alpha^2s^3\mu^4\iint_{(0,T)\times\mathcal{I}}\xi_0^3~e^{2s\varphi_0}|\partial_t \eta|^2+\left(\frac{1}{\alpha^{8}}+\alpha^{4}\right)\iint_{(0,T)\times J}\Big(s^7\mu^8\xi^7_0~|z|^2\Big).
\end{multline}
Going back to the initial variable $\eta = z e^{-s\varphi_0}$, the estimate \eqref{Eqs15-03-2025-Final}
gives immediately Theorem \ref{poutre}.
\subsection{Estimate of the cross product from below in the case \texorpdfstring{$\alpha<\alpha^*$}{etoilealpha} -- Proof of Theorem \ref{poutre3}} 
We recall that,  for any non-negative real parameters $\alpha$ and $\beta$, the equation \eqref{produit-croise-I_l} reads
\begin{multline*}
\iint_{(0,T)\times\mathcal{I}}\mathcal{M}_{11} z\cdot \mathcal{M}_{21} z=(8-6\beta)I_1+(66+36\beta) I_2+(12-6\beta) I_3\\
+(3\alpha^2+6\beta) I_4+2 I_5+(\alpha^2+6)I_6+(32+12\beta)\alpha I_7+ 12 \alpha I_8+\mathcal{R}_2.
\end{multline*}
To have positive coefficients in front of $I_\ell$, for $\ell=1,\ldots ,6$, we must impose $\beta\in \left[0,\frac{4}{3}\right)$. 
It remains to estimate the terms $(32+12\beta)\alpha I_7$ and $ 12 \alpha I_8$. For that, we follow the strategy of \cite{Sourav} which consists in using Cauchy-Schwarz inequality to bound those terms by $I_2$, $I_3$, $I_4$, $I_8$. We will observe that with such an approach the optimal value of $\beta$ is $\beta^*$, and it allows to obtain a lower bound for the cross product for $\alpha<\alpha^*$, with $\beta^*$ and $\alpha^*$ defined in \eqref{DefAlphastar}.

First, recall that
$$
(32+12\beta)\alpha I_7=(32+12\beta)s^4\alpha\iint_{(0,T)\times\mathcal{I}}(\partial_{x_1}\varphi_0)^3(\partial^2_{x_1}\varphi_0)\partial_tz\partial_{x_1}z.
$$
For any $\theta_1>0$, we have by \eqref{poutre2-0} and Young's inequality
\begin{multline}\label{F7CS}
\left|(32+12\beta)s^4\alpha\iint_{(0,T)\times\mathcal{I}\backslash J_0}(\partial_{x_1}\varphi_0)^3(\partial^2_{x_1}\varphi_0)\partial_tz\partial_{x_1}z\right|\\
\leq \frac{(16+6\beta)\alpha^2}{(3\alpha^2+6\beta)\theta_1}s^5\iint_{(0,T)\times\mathcal{I}\backslash J_0}(\partial_{x_1}\varphi_0)^4(\partial^2_{x_1}\varphi_0)|\partial_{x_1}z|^2\\
+(16+6\beta)\theta_1(3\alpha^2+6\beta) s^3\iint_{(0,T)\times\mathcal{I}\backslash J_0}(\partial_{x_1}\varphi_0)^2(\partial^2_{x_1}\varphi_0)|\partial_tz|^2.
\end{multline}
Then to bound $|(32+12\beta)\alpha I_7|$ by $(66+36\beta) I_2$ and $(3\alpha^2+6\beta) I_4$, and some corresponding local terms in $J_0$, we need $\frac{(16+6\beta)\alpha^2}{(3\alpha^2+6\beta)\theta_1}<66+36\beta$ and $(16+6\beta)\theta_1<1$, or equivalently
$$\frac{(8+3\beta)\alpha^2}{(3\alpha^2+6\beta)(33+18\beta)}<\theta_1<\frac{1}{(16+6\beta)}.$$
The limiting case corresponds to $ \frac{(8+3\beta)\alpha^2}{(3\alpha^2+6\beta)(33+18\beta)}=\frac{1}{(16+6\beta)}$ or equivalently $ \alpha^2=\frac{6\beta(33+18\beta)}{18\beta^2+42\beta+29}$. 
As a consequence, we control the term $|(32+12\beta)\alpha I_7|$ for any $\displaystyle \alpha< \alpha_1^*(\beta)=\sqrt{\frac{6\beta(33+18\beta)}{18\beta^2+42\beta+29}}$.

Similarly, we recall that
$$
12 \alpha I_8=12s^2\alpha\iint_{(0,T)\times\mathcal{I}}(\partial^2_{x_1}\varphi_0)(\partial_{x_1}\varphi_0)\partial^2_{x_1}z\partial_t\partial_{x_1}z
$$
and for $\theta_2>0$ we have
\begin{multline}\label{F8CS}
\big|12s^2\alpha\iint_{(0,T)\times\mathcal{I}\backslash J_0}(\partial^2_{x_1}\varphi_0)(\partial_{x_1}\varphi_0)\partial^2_{x_1}z\partial_t\partial_{x_1}z\big|\\
\leq \frac{6\alpha^2}{(\alpha^2+6)\theta_2}s^3\iint_{(0,T)\times\mathcal{I}\backslash J_0}(\partial^2_{x_1}\varphi_0)(\partial_{x_1}\varphi_0)^2(\partial^2_{x_1}z)^2+6\theta_2(\alpha^2+6)s\iint_{(0,T)\times\mathcal{I}\backslash J_0}(\partial^2_{x_1}\varphi_0)(\partial_t\partial_{x_1}z)^2.    
\end{multline}
Then to bound $|12 \alpha I_8|$ by $(12-6\beta) I_3$ and $(\alpha^2+6)I_6$, and some corresponding local terms in $J_0$, we need $\frac{6\alpha^2}{(\alpha^2+6)\theta_2}<12-6\beta$ and $6\theta_2<1$ which leads to
 $$\frac{\alpha^2}{(\alpha^2+6)(2-\beta)}<\theta_2<\frac{1}{6}.$$
The limiting case  corresponds to $\frac{\alpha^2}{(\alpha^2+6)(2-\beta)}=\frac16$ or equivalently $\alpha^2=\frac{6(2-\beta)}{4+\beta}$.
As a consequence, we control the term $|12 \alpha I_8|$ for any $\displaystyle \alpha < \alpha_2^*(\beta)=\sqrt{\frac{6(2-\beta)}{4+\beta}}$. 

To conclude, we obtain a lower bound on the cross product for $\alpha$ less than $\alpha^*(\beta)=\min\{\alpha_1^*(\beta),\alpha_2^*(\beta)\}$. It is easily seen that the value $\alpha^*(\beta)$ is maximal for $\beta$ such that $\alpha_1^*(\beta)=\alpha_2^*(\beta)$ which is to say
$$
\frac{6(2-\beta)}{4+\beta}=\frac{6\beta(33+18\beta)}{18\beta^2+42\beta+29}
$$
or equivalently
$$
P(\beta) = 36\beta^3+111\beta^2+77\beta-58=0.
$$
It turns out that $P$ admits a unique positive roots $\beta_*$ which verifies $\beta_* < \frac{4}{3}$.
For $\beta=\beta^*$  and $\alpha<\alpha^*=\sqrt{\frac{6(2-\beta^*)}{4+\beta^*}}$ we have
\begin{multline}\label{ERstCPalphaloweralphastar}
\iint_{(0,T)\times\mathcal{I}}\mathcal{M}_{11} z\cdot \mathcal{M}_{21} z+\iint_{(0,T)\times J_0}\Big(s^5\mu^6\xi_0^5|\partial_{x_1}z|^2+s^3\mu^4\xi_0^3(|\partial^2_{x_1}z|^2+|\partial_tz|^2)+|\partial_t\partial_{x_1}z|^2)\Big)-\mathcal{R}_2\\
\geq C( I_1+ I_2+ I_3+ I_4+ I_5+I_6),
\end{multline}
for some constant $C>0$ that may depend on $\alpha\in (0,\alpha^*)$.
Combining \eqref{ERstCPalphaloweralphastar} with \eqref{EstR0}, \eqref{EstR1} and \eqref{ineqalite12}, we find that for $\lambda_0\geq 1$ and $s_0\geq 1 $ large enough, for all $\lambda\geq\lambda_0$ and all $s\geq s_0(T^k+T^{k-1})=s_0(T^k+T^{k-1})$, 

\begin{multline}\label{midle-estimateBis}
\|\mathcal{M}_1z\|^2_{L^2((0,T)\times\mathcal{I})}+\|\mathcal{M}_2z\|^2_{L^2((0,T)\times\mathcal{I})}\\
+ \iint_{(0,T)\times\mathcal{I}}\Big(s^7\mu^8\xi^7_0~|z|^2+s^5\mu^6\xi_0^5|\partial_{x_1}z|^2+s^3\mu^4\xi_0^3(|\partial^2_{x_1}z|^2+|\partial_tz|^2)  +s\mu^2\xi_0(|\partial^3_{x_1}z|^2+|\partial_t\partial_{x_1}z|^2)\Big)\\
\leq C \Big(\iint_{(0,T)\times J_0}\Big(s^7\mu^8\xi^7_0~|z|^2+s^5\mu^6\xi_0^5|\partial_{x_1}z|^2+s^3\mu^4\xi_0^3(|\partial^2_{x_1}z|^2+|\partial_tz|^2)+s\mu^2\xi_0(|\partial^3_{x_1}z|^2+|\partial_t\partial_{x_1}z|^2)\\
\iint_{(0,T)\times\mathcal{I}}e^{2s\varphi_0}|f_\eta|^2\Big)
\end{multline}
for some constant $C>0$ that may depend on $\alpha\in (0,\alpha^*)$. Finally, we proceed as in Section \ref{Sect3} to end the proof of Theorem \ref{poutre3}.


\subsection{Bounding \texorpdfstring{$\mathcal{R}_1$ -- Proof of the estimate \texorpdfstring{\eqref{EstR1}}{ref}}{reste}}\label{SectionEstR1}
To complete our study, it remains to  prove that $\mathcal{R}_1$, defined in \eqref{DefR1}, satisfies \eqref{EstR1}. This is the purpose of the present section.

For each $(i,j)\in \mathbb{I}$, where the set $\mathbb{I}$ is defined in \eqref{DefR1}, we first give an adequate expressions of $I_{ij}$ by using integration by parts, and then an estimate of $|I_{ij}|$ obtained by using  \eqref{poutre1}, \eqref{n2-2}, \eqref{n3-2} and \eqref{n1-2}. The estimate \eqref{EstR1} follows by combining all these estimates.
\paragraph{-- Computation of $I_{16}$ --}
$$
I_{16}=-2\iint_{(0,T)\times\mathcal{I}}s^5(\partial_{x_1}\varphi_0)^4(\partial_t\varphi_0)z\partial_tz=s^5\iint_{(0,T)\times\mathcal{I}}\partial_t[(\partial_{x_1}\varphi_0)^4(\partial_t\varphi_0)]|z|^2,
$$
which yields to the following estimate:
$$
|I_{16}|\lesssim \frac{1}{s_0^2}s^7\mu^4\iint_{(0,T)\times\mathcal{I}}\xi_0^{7}|z|^2.
$$
\paragraph{-- Computation of $I_{17}$ --}
$$
I_{17}=-2\alpha s^6\iint_{(0,T)\times\mathcal{I}}(\partial_{x_1}\varphi_0)^5(\partial_t\varphi_0)z\partial_{x_1}z=\alpha s^6\iint_{(0,T)\times\mathcal{I}}\partial_{x_1}\Big[(\partial_{x_1}\varphi_0)^5(\partial_t\varphi_0)\Big]|z|^2,$$
which implies
$
\displaystyle |I_{17}|\lesssim  \frac{\alpha}{s_0} s^7\mu^6\iint_{(0,T)\times\mathcal{I}}\xi_0^{7}|z|^2.
$
\paragraph{-- Computation of $I_{26}$ --}
\begin{align*}
I_{26}&=-12\alpha s^3\iint_{(0,T)\times\mathcal{I}}(\partial_{x_1}\varphi_0)^2 (\partial_t\varphi_0) \partial_tz \partial^2_{x_1}z\\
&=12\alpha s^3\iint_{(0,T)\times\mathcal{I}}\partial_{x_1}\Big[(\partial_{x_1}\varphi_0)^2 (\partial_t\varphi_0)\Big]~\partial_{x_1}z \partial_tz-6\alpha s^3\iint_{(0,T)\times\mathcal{I}}\partial_t\Big[(\partial_{x_1}\varphi_0)^2 (\partial_t\varphi_0)\Big] |\partial_{x_1}z|^2,
\end{align*}
which yields the estimate
\begin{align*}
|I_{26}|&\lesssim  \frac{\alpha}{s_0} s^4 \mu^3\iint_{(0,T)\times\mathcal{I}}\xi_0^{4} |\partial_{x_1}z||\partial_tz|+\frac{\alpha}{s_0^2} s^5 \mu^2 \iint_{(0,T)\times\mathcal{I}}\xi_0^{5} |\partial_{x_1}z|^2\\
& \lesssim \frac{\alpha}{s_0} s^3 \mu^3\iint_{(0,T)\times\mathcal{I}}\xi_0^3 |\partial_tz|^2+\left(\frac{\alpha}{s_0}+\frac{\alpha}{s_0^2}\right) s^5 \mu^3 \iint_{(0,T)\times\mathcal{I}}\xi_0^{5}|\partial_{x_1}z|^2.
\end{align*}
\paragraph{-- Computation of $I_{36}$ --}
\begin{align*}
I_{36}&=-2s\iint_{(0,T)\times\mathcal{I}} (\partial_t\varphi_0)\partial_tz \partial^4_{x_1}z\\
&=2s\iint_{(0,T)\times\mathcal{I}} (\partial_t\partial_{x_1}\varphi_0) \partial_tz \partial^3_{x_1}z+2s\iint_{(0,T)\times\mathcal{I}} (\partial_t\varphi_0) \partial_t\partial_{x_1}z \partial^3_{x_1}z\\
&=-2s\iint_{(0,T)\times\mathcal{I}}(\partial_t\partial^2_{x_1}\varphi_0) \partial_tz \partial^2_{x_1}z-4s\iint_{(0,T)\times\mathcal{I}}(\partial_t\partial_{x_1}\varphi_0) \partial_t\partial_{x_1}z \partial^2_{x_1}z+s\iint_{(0,T)\times\mathcal{I}} (\partial^2_t\varphi_0) |\partial^2_{x_1}z|^2.
\end{align*}
This gives the following estimate
\begin{align*}
|I_{36}|&\lesssim \frac{1}{s_0^2}s^3 \mu^2 \iint_{(0,T)\times\mathcal{I}}\xi_0^{3}|\partial_tz||\partial^2_{x_1}z|+\frac{1}{s_0}s^2 \mu \iint_{(0,T)\times\mathcal{I}}\xi_0^{2}|\partial_t\partial_{x_1}z||\partial^2_{x_1}z|+\frac{1}{s_0^2}s^3\iint_{(0,T)\times\mathcal{I}}\xi_0^{3}|\partial^2_{x_1}z|^2\\
&\lesssim \frac{1}{s_0^2}s^3 \mu^2 \iint_{(0,T)\times\mathcal{I}}\xi_0^{3}(|\partial_tz|^2+|\partial^2_{x_1}z|^2)+\frac{1}{s_0}s\mu \iint_{(0,T)\times\mathcal{I}}\xi_0|\partial_t\partial_{x_1}z|^2.
\end{align*}
\paragraph{-- Computation of $I_{37}$ --}
\begin{align*}
    I_{37}&=-2s^2\alpha\iint_{(0,T)\times\mathcal{I}}(\partial_{x_1}\varphi_0)(\partial_t\varphi_0)\partial_{x_1}z\partial^4_{x_1}z\\
    &=2s^2\alpha\iint_{(0,T)\times\mathcal{I}}\partial_{x_1}\big[(\partial_{x_1}\varphi_0)(\partial_t\varphi_0)\big]\partial_{x_1}z\partial^3_{x_1}z-s^2\alpha\iint_{(0,T)\times\mathcal{I}}\partial_{x_1}\big[(\partial_{x_1}\varphi_0)(\partial_t\varphi_0)\big]|\partial^2_{x_1}z|^2\\
    &=s^2\alpha\iint_{(0,T)\times\mathcal{I}}\partial_{x_1}^3\big[(\partial_{x_1}\varphi_0)(\partial_t\varphi_0)\big]|\partial_{x_1}z|^2 -3\alpha s^2\iint_{(0,T)\times\mathcal{I}}\partial_{x_1}\big[(\partial_{x_1}\varphi_0)(\partial_t\varphi_0)\big]|\partial^2_{x_1}z|^2.
\end{align*}
As a consequence, we obtain
$ \displaystyle
    |I_{37}|\lesssim \frac{\alpha}{s_0^3} s^5\mu^4\iint_{(0,T)\times\mathcal{I}}\xi_0^{5}|\partial_{x_1}z|^2+\frac{\alpha}{s_0} s^3\mu^2\iint_{(0,T)\times\mathcal{I}}\xi_0^{3}|\partial^2_{x_1}z|^2.
$
\paragraph{-- Computation of $I_{46}$ --}
$$
I_{46}=-4s^2\alpha\iint_{(0,T)\times\mathcal{I}}(\partial_{x_1}\varphi_0)(\partial_t\varphi_0)\partial_t\partial_{x_1}z\partial_tz=2s^2\alpha \iint_{(0,T)\times\mathcal{I}}\partial_{x_1}\big[(\partial_{x_1}\varphi_0)(\partial_t\varphi_0)\big]|\partial_tz|^2.$$
From this, we obtain the estimate
$
\displaystyle |I_{46}| \lesssim \frac{\alpha}{s_0} s^3\mu^2\iint_{(0,T)\times\mathcal{I}}\xi_0^{3}|\partial_tz|^2.
$
\paragraph{-- Computation of $I_{47}$ --}
$$
I_{47}=-4s^3\alpha^2\iint_{(0,T)\times\mathcal{I}}(\partial_{x_1}\varphi_0)^2(\partial_t\varphi_0)\partial_{x_1}z\partial_t\partial_{x_1}z=2s^3\alpha^2\iint_{(0,T)\times\mathcal{I}}\partial_t\big[(\partial_{x_1}\varphi_0)^2(\partial_t\varphi_0)\big]|\partial_{x_1}z|^2.$$
As a consequence, we obtain $
\displaystyle |I_{47}|\lesssim \frac{\alpha^2}{s_0^2} s^5\mu^2\iint_{(0,T)\times\mathcal{I}}\xi_0^{5}|\partial_{x_1}z|^2.
$
\paragraph{-- Computation of $I_{56}$ --}
$$
I_{56}=-2s\iint_{(0,T)\times\mathcal{I}}(\partial_t\varphi_0)\partial_tz\partial^2_tz=s\iint_{(0,T)\times\mathcal{I}}(\partial^2_t\varphi_0)|\partial_tz|^2,
$$
which implies
$\displaystyle  |I_{56}|\lesssim \frac{1}{s_0^2}s^3\iint_{(0,T)\times\mathcal{I}}\xi_0^{3}|\partial_t z|^2.$
\paragraph{-- Computation of $I_{57}$ --}
\begin{align*}
I_{57}&=-2s^2\alpha\iint_{(0,T)\times\mathcal{I}}(\partial_{x_1}\varphi_0)(\partial_t\varphi_0)\partial_{x_1}z\partial^2_tz\\
&=2s^2\alpha\iint_{(0,T)\times\mathcal{I}}\partial_t\big[(\partial_{x_1}\varphi_0)(\partial_t\varphi_0)\big]\partial_{x_1}z\partial_tz-s^2\alpha\iint_{(0,T)\times\mathcal{I}}\partial_{x_1}\big[(\partial_{x_1}\varphi_0)(\partial_t\varphi_0)\big]|\partial_tz|^2,
\end{align*}
and this gives
\begin{align*}
|I_{57}|&\lesssim \frac{\alpha}{s_0^2} s^4\mu \iint_{(0,T)\times\mathcal{I}}\xi_0^4|\partial_{x_1}z||\partial_tz|+\frac{\alpha}{s_0} s^3\mu^2\iint_{(0,T)\times\mathcal{I}}\xi_0^3|\partial_tz|^2\\
&\lesssim \frac{\alpha}{s_0^2} s^5\mu \iint_{(0,T)\times\mathcal{I}}\xi_0^5|\partial_{x_1}z|^2+\left(\frac{\alpha}{s_0^2}+\frac{\alpha}{s_0}\right) s^3\mu^2\iint_{(0,T)\times\mathcal{I}}\xi_0^3|\partial_tz|^2.
\end{align*}
\paragraph{-- Computation of $I_{61}$ --}
\begin{align*}
I_{61}&=-4s^5\iint_{(0,T)\times\mathcal{I}}(\partial_{x_1}\varphi_0)^3(\partial_t\varphi_0)^2z\partial_{x_1}z=2s^5\iint_{(0,T)\times\mathcal{I}}\partial_{x_1}\big[(\partial_{x_1}\varphi_0)^3(\partial_t\varphi_0)^2\big]|z|^2,
\end{align*}
which leads to
$
\displaystyle |I_{61}|\lesssim  \frac{1}{s_0^2}s^7\mu^4\iint_{(0,T)\times\mathcal{I}}\xi_0^7|z|^2.
$
\paragraph{-- Computation of $I_{62}$ --}
\begin{align*}
I_{62}&=-4s^3\iint_{(0,T)\times\mathcal{I}}(\partial_{x_1}\varphi_0)(\partial_t\varphi_0)^2z\partial^3_{x_1}z\\
&=4s^3\iint_{(0,T)\times\mathcal{I}}\partial_{x_1}\big[(\partial_{x_1}\varphi_0)(\partial_t\varphi_0)^2\big]z\partial^2_{x_1}z-2s^3\iint_{(0,T)\times\mathcal{I}}\partial_{x_1}\big[(\partial_{x_1}\varphi_0)(\partial_t\varphi_0)^2\big]|\partial_{x_1}z|^2\\
&=-4s^3\iint_{(0,T)\times\mathcal{I}}\partial^2_{x_1}\big[(\partial_{x_1}\varphi_0)(\partial_t\varphi_0)^2\big]z\partial_{x_1}z-6s^3\iint_{(0,T)\times\mathcal{I}}\partial_{x_1}\big[(\partial_{x_1}\varphi_0)(\partial_t\varphi_0)^2\big]|\partial_{x_1}z|^2\\
&=2s^3\iint_{(0,T)\times\mathcal{I}}\partial^3_{x_1}\big[(\partial_{x_1}\varphi_0)(\partial_t\varphi_0)^2\big]|z|^2-6s^3\iint_{(0,T)\times\mathcal{I}}\partial_{x_1}\big[(\partial_{x_1}\varphi_0)(\partial_t\varphi_0)^2\big]|\partial_{x_1}z|^2.
\end{align*}
As a result, we obtain
$ \displaystyle
|I_{62}|\lesssim \frac{1}{s_0^4}s^7\mu^4\iint_{(0,T)\times\mathcal{I}}\xi_0^7|z|^2+\frac{1}{s_0^2}s^5\mu^2\iint_{(0,T)\times\mathcal{I}}\xi_0^5|\partial_{x_1}z|^2.
$
\paragraph{-- Computation of $I_{63}$ --}
\begin{align*}
I_{63}&=-s^2\alpha\iint_{(0,T)\times\mathcal{I}}(\partial_t\varphi_0)^2z\partial_t\partial^2_{x_1}z\\
&=2s^2\alpha\iint_{(0,T)\times\mathcal{I}}(\partial_t\partial_{x_1}\varphi_0)(\partial_t\varphi_0)z\partial_t\partial_{x_1}z-s^2\alpha\iint_{(0,T)\times\mathcal{I}}(\partial^2_t\varphi_0)(\partial_t\varphi_0)|\partial_{x_1}z|^2\\
&=s^2\alpha\iint_{(0,T)\times\mathcal{I}}\partial_t\partial_{x_1}\big[(\partial_t\partial_{x_1}\varphi_0)(\partial_t\varphi_0)\big]|z|^2-2s^2\alpha \iint_{(0,T)\times\mathcal{I}}(\partial_t\partial_{x_1}\varphi_0)(\partial_t\varphi_0)\partial_tz\partial_{x_1}z\\
&\quad -s^2\alpha\iint_{(0,T)\times\mathcal{I}}(\partial^2_t\varphi_0)(\partial_t\varphi_0)|\partial_{x_1}z|^2.
\end{align*}
From this, we get the following estimate:
\begin{align*}
|I_{63}|& \lesssim \frac{\alpha}{s_0^5} s^7\mu^2\iint_{(0,T)\times\mathcal{I}}\xi_0^7|z|^2+\frac{\alpha}{s_0^2} s^4\mu\iint_{(0,T)\times\mathcal{I}}\xi_0^4|\partial_tz||\partial_{x_1}z|+\frac{\alpha}{s_0^3} s^5\iint_{(0,T)\times\mathcal{I}}\xi_0^5|\partial_{x_1}z|^2\\
& \lesssim \frac{\alpha}{s_0^5} s^7\mu^2\iint_{(0,T)\times\mathcal{I}}\xi_0^7|z|^2+\frac{\alpha}{s_0^2} s^3\mu\iint_{(0,T)\times\mathcal{I}}\xi_0^3|\partial_tz|^2+\left(\frac{\alpha}{s_0^2}+\frac{\alpha}{s_0^3}\right) s^5\mu\iint_{(0,T)\times\mathcal{I}}\xi_0^5|\partial_{x_1}z|^2.
\end{align*}
\paragraph{-- Computation of $I_{64}$ --}
\begin{align*}
I_{64}&=-s^4\alpha\iint_{(0,T)\times\mathcal{I}}(\partial_{x_1}\varphi_0)^2(\partial_t\varphi_0)^2z\partial_tz=\frac{s^4\alpha}{2}\iint_{(0,T)\times\mathcal{I}}\partial_t\big[(\partial_{x_1}\varphi_0)^2(\partial_t\varphi_0)^2\big]|z|^2,
\end{align*}
It follows that
$ \displaystyle
|I_{64}| \lesssim \frac{\alpha}{s_0^3} s^7\mu^2\iint_{(0,T)\times\mathcal{I}}\xi_0^7|z|^2.
$
\paragraph{-- Computation of $I_{65}$ --} $~$
\smallskip

As we directly have
$ \displaystyle
I_{65}=-12s^5\iint_{(0,T)\times\mathcal{I}}(\partial_{x_1}\varphi_0)^2(\partial^2_{x_1}\varphi_0)(\partial_t\varphi_0)^2|z|^2,
$
we  obtain immediately
$ \displaystyle
|I_{65}|\lesssim \frac{1}{s_0^2}s^7\mu^4\iint_{(0,T)\times\mathcal{I}}\xi_0^7|z|^2.
$
\paragraph{-- Computation of $I_{66}$ --} $~$
\smallskip

$$ \displaystyle I_{66} =-2s^3\iint_{(0,T)\times\mathcal{I}}(\partial_t\varphi_0)^3z\partial_tz=3s^3\iint_{(0,T)\times\mathcal{I}}(\partial^2_t\varphi_0)(\partial_t\varphi_0)^2|z|^2$$
which leads to $\displaystyle |I_{66}| \lesssim \frac{1}{s_0^4}s^7\iint_{(0,T)\times\mathcal{I}}\xi_0^7|z|^2$.

\paragraph{-- Computation of $I_{67}$ --}
\begin{align*}
I_{67}&=-2s^4\alpha \iint_{(0,T)\times\mathcal{I}}(\partial_{x_1}\varphi_0)(\partial_t\varphi_0)^3z\partial_{x_1}z=s^4\alpha\iint_{(0,T)\times\mathcal{I}}\partial_{x_1}\big[(\partial_{x_1}\varphi_0)(\partial_t\varphi_0)^3\big]|z|^2. \end{align*}
Accordingly, we obtain $ \displaystyle |I_{67}| \lesssim \frac{\alpha}{s_0^3} s^7\mu^2\iint_{(0,T)\times\mathcal{I}}\xi_0^7|z|^2$.

\paragraph{-- Computation of $I_{71}$ --}
\begin{align*}
I_{71}& =-4s^4\alpha  \iint_{(0,T)\times\mathcal{I}}(\partial_{x_1}\varphi_0)^3(\partial_t\varphi_0)\partial_{x_1}z\partial^2_{x_1}z=2s^4\alpha  \iint_{(0,T)\times\mathcal{I}}\partial_{x_1}\big[(\partial_{x_1}\varphi_0)^3(\partial_t\varphi_0)\big]|\partial_{x_1}z|^2,\end{align*}
and so
$ \displaystyle
|I_{71}| \lesssim \frac{\alpha}{s_0} s^5\mu^4\iint_{(0,T)\times\mathcal{I}}\xi_0^5|\partial_{x_1}z|^2.
$
\paragraph{-- Computation of $I_{72}$ --}
\begin{align*}
I_{72}&=-4s^2\alpha\iint_{(0,T)\times\mathcal{I}}(\partial_{x_1}\varphi_0)(\partial_t\varphi_0)\partial^2_{x_1}z\partial^3_{x_1}z=2 s^2\alpha \iint_{(0,T)\times\mathcal{I}}\partial_{x_1}\big[(\partial_{x_1}\varphi_0)(\partial_t\varphi_0)\big]|\partial^2_{x_1}z|^2,\end{align*}
and it follows that
$ \displaystyle |I_{72}|\lesssim \frac{\alpha}{s_0} s^3\mu^2\iint_{(0,T)\times\mathcal{I}}\xi_0^3|\partial^2_{x_1}z|^2$.

\paragraph{-- Computation of $I_{73}$ --}
\begin{align*}
 I_{73}&=-s\alpha^2 \iint_{(0,T)\times\mathcal{I}} (\partial_t\varphi_0)\partial^2_{x_1}z\partial_t\partial^2_{x_1}z=\frac{s\alpha^2}{2} \iint_{(0,T)\times\mathcal{I}} (\partial^2_t\varphi_0)|\partial^2_{x_1}z|^2, \end{align*}
 from which we deduce
 $\displaystyle |I_{73}| \lesssim \frac{\alpha^2}{s_0^2} s^3\iint_{(0,T)\times\mathcal{I}}\xi_0^3|\partial^2_{x_1}z|^2$.

\paragraph{-- Computation of $I_{74}$ --}
\begin{align*}
  I_{74}&= -s^3\alpha^2 \iint_{(0,T)\times\mathcal{I}} (\partial_{x_1}\varphi_0)^2(\partial_t\varphi_0)\partial_tz\partial^2_{x_1}z\\
  &=s^3\alpha^2 \iint_{(0,T)\times\mathcal{I}} \partial_{x_1}\big[(\partial_{x_1}\varphi_0)^2(\partial_t\varphi_0)\big]\partial_tz\partial_{x_1}z-\frac{s^3\alpha^2}{2}\iint_{(0,T)\times\mathcal{I}}\partial_t\big[(\partial_{x_1}\varphi_0)^2(\partial_t\varphi_0)\big]|\partial_{x_1}z|^2,\end{align*}
  which in turn implies
  \begin{align*}
  |I_{74}| &\lesssim \frac{\alpha}{s_0} s^4\mu^3\iint_{(0,T)\times\mathcal{I}}\xi_0^4|\partial_{x_1}z|(\alpha |\partial_tz|)+\frac{\alpha^2}{s_0^2} s^5\mu^2\iint_{(0,T)\times\mathcal{I}}\xi_0^5|\partial_{x_1}z|^2\\
  &\lesssim \frac{\alpha}{s_0} s^3\mu^3\iint_{(0,T)\times\mathcal{I}}\xi_0^3(\alpha^2 |\partial_tz|^2)+\left(\frac{\alpha}{s_0}+\frac{\alpha^2}{s_0^2}\right) s^5\mu^3\iint_{(0,T)\times\mathcal{I}}\xi_0^5|\partial_{x_1}z|^2.
\end{align*}
\paragraph{-- Computation of $I_{75}$ --}
\begin{align*}
I_{75}&=-12s^4\alpha\iint_{(0,T)\times\mathcal{I}}(\partial_{x_1}\varphi_0)^2(\partial^2_{x_1}\varphi_0)(\partial_t\varphi_0)z\partial^2_{x_1}z\\
&=-6s^4\alpha\iint_{(0,T)\times\mathcal{I}}\partial^2_{x_1}\big[(\partial_{x_1}\varphi_0)^2(\partial^2_{x_1}\varphi_0)(\partial_t\varphi_0)\big]|z|^2+12s^4\alpha\iint_{(0,T)\times\mathcal{I}}(\partial_{x_1}\varphi_0)^2(\partial^2_{x_1}\varphi_0)(\partial_t\varphi_0)|\partial_{x_1}z|^2.\end{align*}
Thereby, it follows that
$\displaystyle |I_{75}|\lesssim \frac{\alpha}{s_0^3} s^7\mu^6\iint_{(0,T)\times\mathcal{I}}\xi_0^7|z|^2+\frac{\alpha}{s_0}s^5\mu^4\iint_{(0,T)\times\mathcal{I}}\xi_0^5|\partial_{x_1}z|^2$.

\paragraph{-- Computation of $I_{76}$ --}
\begin{align*}
I_{76}&=-2s^2\alpha\iint_{(0,T)\times\mathcal{I}}(\partial_t\varphi_0)^2\partial_tz\partial^2_{x_1}z\\
&=4s^2\alpha\iint_{(0,T)\times\mathcal{I}}(\partial_t\varphi_0)(\partial_t\partial_{x_1}\varphi_0)\partial_tz\partial_{x_1}z-2s^2\alpha\iint_{(0,T)\times\mathcal{I}}(\partial_t\varphi_0)(\partial_t^2\varphi_0)|\partial_{x_1}z|^2.\end{align*}
A direct estimation of these two terms gives
\begin{align*}
|I_{76}| &\lesssim \frac{\alpha}{s_0^2} s^4\mu \iint_{(0,T)\times\mathcal{I}}\xi_0^4|\partial_tz||\partial_{x_1}z|+\frac{\alpha}{s_0^3} s^5\iint_{(0,T)\times\mathcal{I}}\xi_0^5|\partial_{x_1}z|^2\\
&\lesssim \frac{\alpha}{s_0^2} s^3\mu \iint_{(0,T)\times\mathcal{I}}\xi_0^3|\partial_tz|^2+\left(\frac{\alpha}{s_0^3}+\frac{\alpha}{s_0^2}\right) s^5\iint_{(0,T)\times\mathcal{I}}\xi_0^5|\partial_{x_1}z|^2.
\end{align*}
\paragraph{-- Computation of $I_{77}$ --}
\begin{align*}
I_{77}&=-2s^3\alpha^2\iint_{(0,T)\times\mathcal{I}}(\partial_{x_1}\varphi_0)(\partial_t\varphi_0)^2\partial^2_{x_1}z\partial_{x_1}z=s^3\alpha^2\iint_{(0,T)\times\mathcal{I}}\partial_{x_1}\big[(\partial_{x_1}\varphi_0)(\partial_t\varphi_0)^2\big]|\partial_{x_1}z|^2,\end{align*}
therefore $\displaystyle |I_{77}| \lesssim \frac{\alpha^2}{s_0^2} s^5\mu^2\iint_{(0,T)\times\mathcal{I}}\xi_0^5|\partial_{x_1}z|^2.$

\paragraph{-- Computation of $I_{81}$ --}
\begin{align*}
I_{81}&=-4s^6\alpha\iint_{(0,T)\times\mathcal{I}}(\partial_{x_1}\varphi_0)^5(\partial_t\varphi_0)z\partial_{x_1}z=2s^6\alpha\iint_{(0,T)\times\mathcal{I}}\partial_{x_1}\big[(\partial_{x_1}\varphi_0)^5(\partial_t\varphi_0)\big]|z|^2,\end{align*}
from which we deduce
$\displaystyle |I_{81}| \lesssim \frac{\alpha}{s_0} s^7\mu^6\iint_{(0,T)\times\mathcal{I}}\xi_0^7|z|^2.$

\paragraph{-- Computation of $I_{82}$ --}
\begin{align*}
I_{82}&=-4s^4\alpha\iint_{(0,T)\times\mathcal{I}}(\partial_{x_1}\varphi_0)^3(\partial_t\varphi_0)z\partial^3_{x_1}z\\
&=4s^4\alpha\iint_{(0,T)\times\mathcal{I}}\partial_{x_1}\big[(\partial_{x_1}\varphi_0)^3(\partial_t\varphi_0)\big]z\partial^2_{x_1}z-2s^4\alpha\iint_{(0,T)\times\mathcal{I}}\partial_{x_1}\big[(\partial_{x_1}\varphi_0)^3(\partial_t\varphi_0)\big]|\partial_{x_1}z|^2\\
&=2s^4\alpha\iint_{(0,T)\times\mathcal{I}}\partial^3_{x_1}\big[(\partial_{x_1}\varphi_0)^3(\partial_t\varphi_0)\big]|z|^2-6s^4\alpha\iint_{(0,T)\times\mathcal{I}}\partial_{x_1}\big[(\partial_{x_1}\varphi_0)^3(\partial_t\varphi_0)\big]|\partial_{x_1}z|^2,\end{align*}
and as a consequence
$\displaystyle |I_{82}| \lesssim \frac{\alpha}{s_0^3} s^7\mu^6\iint_{(0,T)\times\mathcal{I}}\xi_0^7|z|^2+\frac{\alpha}{s_0} s^5\mu^4\iint_{(0,T)\times\mathcal{I}}\xi_0^5|\partial_{x_1}z|^2.$

\paragraph{-- Computation of $I_{83}$ --}
\begin{align*}
I_{83}&=-s^3\alpha^2\iint_{(0,T)\times\mathcal{I}}(\partial_{x_1}\varphi_0)^2(\partial_t\varphi_0)z\partial_t\partial^2_{x_1}z\\
&=s^3\alpha^2\iint_{(0,T)\times\mathcal{I}}\partial_{x_1}\big[(\partial_{x_1}\varphi_0)^2(\partial_t\varphi_0)\big]z\partial_t\partial_{x_1}z-\frac{s^3\alpha^2}{2}\iint_{(0,T)\times\mathcal{I}}\partial_t\big[(\partial_{x_1}\varphi_0)^2(\partial_t\varphi_0)\big]|\partial_{x_1}z|^2\\
&=\frac{s^3\alpha^2}{2}\iint_{(0,T)\times\mathcal{I}}\partial_t\partial^2_{x_1}\big[(\partial_{x_1}\varphi_0)^2(\partial_t\varphi_0)\big]|z|^2-s^3\alpha^2\iint_{(0,T)\times\mathcal{I}}\partial_{x_1}\big[(\partial_{x_1}\varphi_0)^2(\partial_t\varphi_0)\big]\partial_tz~\partial_{x_1}z\\
&-\frac{s^3\alpha^2}{2}\iint_{(0,T)\times\mathcal{I}}\partial_t\big[(\partial_{x_1}\varphi_0)^2(\partial_t\varphi_0)\big]|\partial_{x_1}z|^2. \end{align*}
This leads to the following estimate:
\begin{align*}
|I_{83}| &\lesssim \frac{\alpha^2}{s_0^4} s^7\mu^4\iint_{(0,T)\times\mathcal{I}}\xi_0^7|z|^2+\frac{\alpha}{s_0} s^4\mu^3\iint_{(0,T)\times\mathcal{I}}\xi_0^4(\alpha|\partial_tz|)|\partial_{x_1}z|+\frac{\alpha^2}{s_0^2} s^5\mu^2\iint_{(0,T)\times\mathcal{I}}\xi_0^5|\partial_{x_1}z|^2\\
&\lesssim \frac{\alpha^2}{s_0^4} s^7\mu^4\iint_{(0,T)\times\mathcal{I}}\xi_0^7|z|^2+\frac{\alpha}{s_0} s^3\mu^3\iint_{(0,T)\times\mathcal{I}}\xi_0^3(\alpha^2|\partial_tz|^2)+\left(\frac{\alpha}{s_0}+\frac{\alpha^2}{s_0^2}\right) s^5\mu^3\iint_{(0,T)\times\mathcal{I}}\xi_0^5|\partial_{x_1}z|^2.
\end{align*}
\paragraph{-- Computation of $I_{84}$ --}
\begin{align*}
I_{84}&=-s^5\alpha^2\iint_{(0,T)\times\mathcal{I}}(\partial_{x_1}\varphi_0)^4(\partial_t\varphi_0)z\partial_tz=\frac{s^5\alpha^2}{2}\iint_{(0,T)\times\mathcal{I}}\partial_t\big[(\partial_{x_1}\varphi_0)^4(\partial_t\varphi_0)\big]|z|^2,\end{align*}
and therefore
$\displaystyle |I_{84}|\lesssim \frac{\alpha^2}{s_0^2} s^7\mu^4\iint_{(0,T)\times\mathcal{I}}\xi_0^7|z|^2.$

\paragraph{-- Computation of $I_{85}$ --}
\par\noindent 
\medskip

As we have
$ \displaystyle I_{85} =-12s^6\alpha\iint_{(0,T)\times\mathcal{I}}(\partial_{x_1}\varphi_0)^4(\partial^2_{x_1}\varphi_0)(\partial_t\varphi_0)|z|^2,$ we directly obtain
$$|I_{85}|\lesssim \frac{\alpha}{s_0} s^7\mu^6\iint_{(0,T)\times\mathcal{I}}\xi_0^7|z|^2.$$

\paragraph{-- Computation of $I_{86}$ --}
\begin{align*}
I_{86}&=-2s^4\alpha\iint_{(0,T)\times\mathcal{I}}(\partial_{x_1}\varphi_0)^2(\partial_t\varphi_0)^2z\partial_tz=s^4\alpha\iint_{(0,T)\times\mathcal{I}}\partial_t\big[(\partial_{x_1}\varphi_0)^2(\partial_t\varphi_0)^2\big]|z|^2,\end{align*}
which directly gives
$\displaystyle |I_{86}|\lesssim \frac{\alpha}{s_0^3} s^7\mu^2\iint_{(0,T)\times\mathcal{I}}\xi_0^7|z|^2.$

\paragraph{-- Computation of $I_{87}$ --}
\begin{align*}
I_{87}&=-2s^5\alpha^2\iint_{(0,T)\times\mathcal{I}}(\partial_{x_1}\varphi_0)^3(\partial_t\varphi_0)^2z\partial_{x_1}z=s^5\alpha^2\iint_{(0,T)\times\mathcal{I}}\partial_{x_1}\big[(\partial_{x_1}\varphi_0)^3(\partial_t\varphi_0)^2\big]|z|^2,\end{align*}
which in turn implies 
$\displaystyle |I_{87}|\lesssim \frac{\alpha^2}{s_0^2} s^7\mu^4\iint_{(0,T)\times\mathcal{I}}\xi_0^7|z|^2,$ and gives the last estimate needed to obtain \eqref{EstR1}.

\appendix


\addcontentsline{toc}{section}{References}
\bibliography{refs}
\end{document}